\crefname{equation}{}{}
\crefname{section}{Section}{Sections}
\Crefname{section}{Section}{Sections}
\crefname{subsection}{Subsection}{Subsections}
\Crefname{subsection}{Subsection}{Subsections}
\Crefname{equation}{Equation}{Equations}
\crefname{theorem}{Theorem}{Theorems}
\Crefname{theorem}{Theorem}{Theorems}
\crefname{lemma}{Lemma}{Lemmas}
\Crefname{lemma}{Lemma}{Lemmas}
\crefname{corollary}{Corollary}{Corollaries}
\Crefname{corollary}{Corollary}{Corollaries}
\crefname{conjecture}{Conjecture}{Conjectures}
\Crefname{conjecture}{Conjecture}{Conjectures}
\crefname{observation}{Observation}{Observations}
\Crefname{observation}{Observation}{Observations}
\newcommand{\Z}{\mathbb{Z}}
\newcommand{\F}{\mathbb{F}}
\newcommand{\s}{\mathsf{s}}
\newtheorem{theorem}{Theorem}[section]
\newtheorem{lemma}[theorem]{Lemma}
\newtheorem{corollary}[theorem]{Corollary}
\newtheorem{conjecture}[theorem]{Conjecture}
\theoremstyle{definition}
\newtheorem{observation}[theorem]{Observation}
\numberwithin{equation}{section}
\title{
On generalized Erd\H{o}s--Ginzburg--Ziv constants 
for $\Z_2^{\lowercase{d}}$
}
\author{Alexander Sidorenko}
\address{sidorenko.ny@gmail.com}
\ead{sidorenko.ny@gmail.com}
\date{\today}
\begin{document}

\begin{abstract}
Let $G$ be a finite abelian group, 
and $r$ be a multiple of its exponent. 
The generalized Erd\H{o}s--Ginzburg--Ziv constant $\s_r(G)$
is the smallest integer $s$
such that every sequence of length $s$ over $G$ 
has a zero-sum subsequence of length $r$. 
We find exact values of $\s_{2m}(\mathbb{Z}_2^d)$ for $d \leq 2m+1$. 
Connections to 
linear binary codes of maximal length 
and codes without a forbidden weight 
are discussed.
\end{abstract}

\begin{keyword}
Erd\H{o}s--Ginzburg--Ziv constant \sep zero-sum subsequence
\MSC[2010]{05C35 \sep 20K01}
\end{keyword}

\maketitle

\section{Introduction}

Let $G$ be a finite abelian group written additively. 
We denote by $\exp(G)$ the {\it exponent} of $G$
that is the least common multiple of the orders of its elements. 
Let $r$ be a multiple of $\exp(G)$. 
The generalized Erd\H{o}s--Ginzburg--Ziv constant $\s_r(G)$
is the smallest integer $s$
such that every sequence of length $s$ over $G$ 
has a zero-sum subsequence of length $r$. 
If $r = \exp(G)$, then $\s(G)=\s_{\exp(G)}(G)$ 
is the classical Erd\H{o}s--Ginzburg--Ziv constant.

In the case when $k$ is a power of a prime, 
Gao \cite{Gao:2003} proved
$\s_{km}(\Z_k^d) = km + (k-1)d$ for $m \geq k^{d-1}$ 
and conjectured that the same equality holds when $km > (k-1)d$. 

In this paper, we consider the case $G = \Z_2^d$. 
We show that the problem of determining $\s_{2m}(\Z_2^d)$ 
is essentially equivalent to finding the lowest redundancy 
of a linear binary code of given length 
which does not contain words of Hamming weight $2m$. 
When $m=2$, this problem is also equivalent 
to finding the maximal length 
of a linear binary code of redundancy $d$ and distance $5$ or higher.

We prove that 
$\s_{2m}(\Z_2^d) = 2m+d$ for $d < 2m$, 
validating the Gao's conjecture for $k=2$. 
We also prove $\s_{2m}(\Z_2^{2m}) = 4m+1$, 
$\s_{2m}(\Z_2^{2m+1}) = 4m+2$ for even $m$, 
and $\s_{2m}(\Z_2^{2m+1}) = 4m+5$ for odd $m$. 
Our results provide counterexamples 
to Conjectures~4.4 and~4.6 from \cite{Gao:2014}.

This paper is organized as follows. 
We discuss 
maximal length linear binary codes 
in \cref{sec:codes},
linear codes without a forbidden weight 
in \cref{sec:forbidden}, 
and 
generalized Erd\H{o}s--Ginzburg--Ziv constants 
in \cref{sec:EGZG}. 
We present our results for $\s_{2m}(\Z_2^d)$ in \cref{sec:summary}. 
\Cref{sec:proofs} contains the proofs.

\section{Linear binary codes of maximal length}\label{sec:codes}

In this section, we will provide basic definitions 
and some results from coding theory 
(for details, see~\cite{Tomlinson:2017}).

Let $\F_2$ be the binary field 
and $\F_2^n$ be the $n$-dimensional vector space over $\F_2$. 
The Hamming weight of vector $x\in\F_2^n$ is the number of its entries equal to $1$. 
The dot product of vectors 
$x=(x_1,x_2,\ldots,x_n)$ and $y=(y_1,y_2,\ldots,y_n)$ 
is defined as $x \cdot y = x_1 y_1 + x_2 y_2 + \ldots + x_n y_n$. 
A {\it linear binary code} of length $n$ is a subspace in $\F_2^n$. 
Its elements are called {\it words}. 
The {\it distance} of a linear code is 
the smallest Hamming weight of its nonzero word. 
A trivial code of dimension $0$ has distance $\infty$. 
A linear binary code $C$ is called an $(n,k,d)$ code
when it has length $n$, dimension $k$ and distance $d$. 
The {\it dual code} $C^\perp$ 
is the subspace of vectors in $\F_2^n$ orthogonal to $C$. 
The {\it redundancy} of $C$ is the dimension of its dual code, $r=n-k$, 
which may be interpreted as the number of parity check bits.
If $y^{(1)},y^{(2)},\ldots,y^{(r)}$ form a basis of $C^\perp$, 
then $C$ consists of vectors $x$ such that 
$x \cdot y^{(i)} = 0$ for every $i=1,2,\ldots,r$. 
If $y^{(i)} = (y_{i1},y_{i2},\ldots,y_{in})$, 
the $(r \times n)$-matrix $[y_{ij}]$ 
is called a {\it parity-check matrix} of $C$. 
In fact, any binary $r \times n$ matrix of rank $r$ 
is a parity-check matrix of some linear code of length $n$ and redundancy $r$.

An $(n,k,2t+1)$ code is capable of correcting up to $t$ errors 
in a word of length $n$ that carries $k$ bits of information. 
It is natural to seek codes of maximal possible length 
with prescribed error-correction capabilities. 
We denote by $N(r,d)$ the largest length of a linear code 
with redundancy $r$ and distance $d$ or higher, 
that is the largest $n$ such that an $(n,n-r,\geq d)$ code exists. 

It follows from the well known Hamming bound (see \cite{Tomlinson:2017}) that
\begin{equation}\label{eq:Hamming}
  \sum_{i=0}^t \binom{N(r,2t+1)}{i} \; \leq \; 2^r \; .
\end{equation}
The primitive binary BCH code (see \cite{Bose:1960,Hocquenghem:1959}) 
is a $(2^m-1,2^m-1-mt,2t+1)$ code. 
It gives the lower bound
\begin{equation}\label{eq:BCH}
  N(mt,2t+1) \; \geq \; 2^m-1 \; .
\end{equation}
It is easy to see that $N(m,3) = 2^m - 1$.
When $t \geq 2$, the bound \eqref{eq:BCH} is not sharp: 
some codes of slightly larger length are known. 
Goppa~\cite{Goppa:1971} constructed $(2^m,2^m-mt,2t+1)$ codes.
Chen~\cite{Chen:1991} found $(2^m+1,\:2^m+1-2m,\:5)$ codes for even $m$.
Sloane, Reddy, and Chen~\cite{Chen:1991,Sloane:1972} obtained 
$(2^m+2^{\lceil m/2 \rceil}-1,$ $2^m+2^{\lceil m/2 \rceil}-1-(2m+1),\:5)$ 
codes. 
Hence,
\[
  N(4s,  5) \; \geq \; 2^{2s} + 1 \; ,
    \;\;\;\;\;\;
  N(4s+2,5) \; \geq \; 2^{2s+1}   \; ,
\]
\[
  N(4s+1,5) \; \geq \; 2^{2s}   + 2^{s}   - 1  \; ,
    \;\;\;\;\;\;
  N(4s+3,5) \; \geq \; 2^{2s+1} + 2^{s+1} - 1  \; .
\]

The values of $N(r,d)$ for small $r$ and $d$ 
can be derived from tables in \cite{Grassl:codetables}. 
We list these values for $4 \leq r \leq 14,\; d=5$ :

\vspace{3mm}
\noindent
\begin{tabular}{cccccccccccc}
  $r$      & 4 & 5 & 6 &  7 &  8 &  9 & 10 & 11 & 12 & 13 & 14
    \\
  $N(r,5)$ & 5 & 6 & 8 & 11 & 17 & 23 & 33 & 47--57 & 65--88 & 81--124 & 128--179
\end{tabular}
\vspace{3mm}

It follows from \eqref{eq:Hamming} that $N(r,5) \leq 2^{(r+1)/2}$. 
When $r$ is large, the best known lower and upper bounds for $N(r,5)$ 
differ by a factor of $\sqrt{2}$ if $r$ is even,
and by a factor of $2$ if $r$ is odd. 
For $a>1$, not a single $(2^m+a,\: 2^m+a-2m,\: 5)$ code is known.

For future use we need

\begin{theorem}[MacWilliams identities \cite{MacWilliams:1963}]
Let $C$ be a $k$-dimensional linear binary code of length $n$. 
Let $A_j$ denote the number of words of Hamming weight $j$ in $C$, 
and $B_j$ denote the number of words of Hamming weight $j$ 
in the dual code $C^\perp$. 
Then for every $\lambda = 0,1,\ldots,n$,
\begin{equation}\label{eq:MWI}
  2^{n-k} \sum_{j=0}^\lambda \binom{n-j}{\lambda-j} A_j
    \; = \;
  2^\lambda \sum_{j=0}^{n-\lambda} \binom{n-j}{\lambda} B_j
    \; .
\end{equation}
\end{theorem}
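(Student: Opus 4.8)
The plan is to derive \eqref{eq:MWI} by counting a single family of incidence pairs in two ways, thereby avoiding any appeal to the polynomial weight enumerator. For a word $x\in\F_2^n$ and a set $T\subseteq\{1,2,\ldots,n\}$, say that $T$ \emph{covers} $x$ if $x_i=0$ for all $i\notin T$. First I would count
\[
  X \; := \; \#\{(x,T) : x \in C,\; |T| = \lambda,\; T \text{ covers } x\}.
\]
Summing over $x$ first: a word of Hamming weight $j$ is covered by exactly $\binom{n-j}{\lambda-j}$ subsets of size $\lambda$ (the remaining $\lambda-j$ elements of $T$ are chosen among the $n-j$ zero coordinates), so $X=\sum_{j=0}^{\lambda}\binom{n-j}{\lambda-j}A_j$. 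Summing over $T$ first: for fixed $T$ of size $\lambda$, the words of $C$ covered by $T$ form a subspace $C(T)=\{x\in C : x_i=0 \text{ for }i\notin T\}$, so their number is $2^{\dim C(T)}$, giving $X=\sum_{|T|=\lambda}2^{\dim C(T)}$.

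Next I would count the companion quantity
\[
  Y \; := \; \#\{(x,T) : x \in C^\perp,\; |T| = \lambda,\; x_i = 0 \text{ for all } i\in T\}.
\]
Summing over $x$ first: a word of $C^\perp$ of weight $j$ vanishes on $\binom{n-j}{\lambda}$ sets $T$ of size $\lambda$ (those avoiding its support), so $Y=\sum_{j=0}^{n-\lambda}\binom{n-j}{\lambda}B_j$. Summing over $T$ first requires the key lemma: for each $T$ of size $\lambda$, the number of words of $C^\perp$ vanishing on $T$ equals $2^{\,n-k-\lambda+\dim C(T)}$. Granting this, $Y=2^{\,n-k-\lambda}\sum_{|T|=\lambda}2^{\dim C(T)}=2^{\,n-k-\lambda}X$; multiplying by $2^\lambda$ and inserting the two expansions of $X$ and $Y$ yields precisely \eqref{eq:MWI}.

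The lemma of the previous paragraph is the one step I expect to need genuine care, since it is the place where duality enters. Let $M$ be a $k\times n$ generator matrix of $C$, and let $M_{\overline T}$ be its submatrix on the $n-\lambda$ columns outside $T$. A vector vanishing on $T$ lies in $C^\perp$ precisely when its restriction to $\overline T$ is orthogonal to every row of $M_{\overline T}$, so the count of such words is $2^{(n-\lambda)-\mathrm{rank}(M_{\overline T})}$. The kernel of the column-deletion map $C\to\F_2^{\overline T}$ is exactly $C(T)$, so $\mathrm{rank}(M_{\overline T})=k-\dim C(T)$, which is the claim. Here one uses that $C$ has dimension $k$ (so $M$ has $k$ rows) and that $C^\perp$ has dimension $n-k$, which is what makes the exponents on the two sides of \eqref{eq:MWI} match.

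Finally I would remark that this is simply a streamlined form of the classical argument: one may instead prove the polynomial identity $W_C(X+Y,X-Y)=2^k\,W_{C^\perp}(X,Y)$ from the character relation $\sum_{u\in C}(-1)^{u\cdot v}=2^k$ if $v\in C^\perp$ and $0$ otherwise, and then recover \eqref{eq:MWI} by an appropriate substitution and extraction of coefficients. The double-counting proof above performs that extraction directly, and over $\F_2$ it loses nothing.
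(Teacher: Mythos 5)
Your double-counting argument is correct and complete. Note that the paper does not prove this statement at all --- it is imported from the literature with a citation to MacWilliams --- so there is no in-paper proof to compare against; the classical route (which you sketch at the end) goes through the polynomial identity $W_C(X+Y,X-Y)=2^k\,W_{C^\perp}(X,Y)$ and then extracts the binomial-moment form \eqref{eq:MWI} by substitution. Your version bypasses the weight enumerator entirely: both expansions of $X$ are immediate, the expansion of $Y$ over words is immediate, and the one nontrivial step --- that the number of words of $C^\perp$ vanishing on a $\lambda$-set $T$ is $2^{\,n-k-\lambda+\dim C(T)}$ --- is handled correctly via rank--nullity applied to the restriction map $C\to\F_2^{\overline T}$, whose kernel is exactly $C(T)$ and whose image has dimension $\mathrm{rank}(M_{\overline T})$, so that the count $2^{(n-\lambda)-\mathrm{rank}(M_{\overline T})}$ becomes $2^{\,n-\lambda-k+\dim C(T)}$. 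The truncation of the sums at $j=\lambda$ and $j=n-\lambda$ is consistent with the vanishing of the corresponding binomial coefficients, and the exponents assemble into \eqref{eq:MWI} exactly. What this buys over the generating-function proof is self-containedness and transparency about where duality enters (a single rank computation); what it gives up is the full polynomial identity, which is stronger than any one value of $\lambda$ --- but for the purposes of this paper, which only uses \eqref{eq:MWI} for $\lambda\le 4$ in Lemma 6.9, your argument suffices.
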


\section{Codes without a forbidden weight}\label{sec:forbidden}

Let $R_{2m}(n)$ be the smallest redundancy of a linear code of length $n$ 
which has no words of Hamming weight $2m$. 
The problem of determining $R_{2m}(n)$ was studied in 
\cite{Bassalygo:2000,Enomoto:1987} 
in notation $l(n,\overline{2m}) = n - R_{2m}(n)$. 
It follows from Theorem~1.1 of~\cite{Enomoto:1987} that 
\begin{equation}\label{eq:Enomoto1}
  R_{2m}(n) \; = \;  n - 2m + 1
  \;\;\;\;\; \mbox{for} \;\; 2m-1 \leq n \leq 4m-1 \; ,
\end{equation}
\begin{equation}\label{eq:Enomoto2}
  R_{2m}(4m) \; = \;  2m \; .
\end{equation}
We will solve some new cases with $n > 4m$ in Corollary~\ref{th:Enomoto_my_1}. 

It follows from Theorem~6 of~\cite{Bassalygo:2000} that 
\begin{equation}\label{eq:Bassalygo}
   R_{2m}(n) / m \; = \; \log_2 n \; + \; O(1) \; ,
\end{equation}
when $m$ is fixed and $n\rightarrow\infty$. 

The proof of \eqref{eq:Enomoto1} and \eqref{eq:Enomoto2} in~\cite{Enomoto:1987}
uses the notion of binormal form of a binary matrix. 
Following~\cite{Enomoto:1987}, 
we say that a $k \times n$ binary matrix $M=[a_{ij}]$ 
is in {\it binormal form} if $n \geq 2k$, 
$\;a_{i,2j-1} = a_{i,2j}$ for $i \neq j$, 
and $a_{i,2i-1} \neq a_{i,2i}\;$ 
($i,j = 1,2,\ldots,k$).

\begin{lemma}[Proposition~2.1 \cite{Enomoto:1987}]\label{th:Enomoto1}
If $k \times n$ binary matrix $M$ is in binormal form, 
then for any $k$-dimensional binary vector $x$, 
there is a unique choice of $k$ indices $j_i \in \{ 2i-1,2i \}$
$\; (i=1,2,\ldots,k)$ 
such that the sum of columns $j_1,j_2,\ldots,j_k$ of $M$ is equal to $x$. 
In particular, 
one can pick up $k$ columns in $M$ whose sum is 
the $k$-dimensional zero vector.
\end{lemma}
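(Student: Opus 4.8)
The plan is to prove the statement by induction on $k$. The base case $k=1$ is immediate: a $1\times n$ matrix in binormal form has $a_{1,1}\neq a_{1,2}$, so one of the two columns $1,2$ is the zero vector and the other is the all-ones (i.e. the scalar $1$) vector; hence for either target $x\in\F_2^1$ exactly one choice of $j_1\in\{1,2\}$ produces $x$. For the inductive step, suppose the claim holds for $(k-1)$-dimensional vectors and $(k-1)\times n'$ binormal matrices, and let $M$ be $k\times n$ in binormal form with $n\geq 2k$. The key structural observation is that if we delete row $k$ and columns $2k-1,2k$, the resulting $(k-1)\times(n-2)$ matrix $M'$ is \emph{also} in binormal form: the conditions $a_{i,2j-1}=a_{i,2j}$ for $i\neq j$ and $a_{i,2i-1}\neq a_{i,2i}$ survive the deletion for all remaining indices, and $n-2\geq 2(k-1)$.

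Next I would exploit the special shape of columns $2k-1$ and $2k$. Write each as $\binom{v}{\varepsilon}$ and $\binom{v}{1-\varepsilon}$ respectively, where $v\in\F_2^{k-1}$ is the common restriction to the first $k-1$ coordinates (equal because $a_{i,2k-1}=a_{i,2k}$ for $i\neq k$) and $\varepsilon=a_{k,2k-1}$, with $a_{k,2k}=1-\varepsilon$. For a target $x=\binom{x'}{x_k}\in\F_2^k$, any selection of one column from each pair $\{2i-1,2i\}$ has its last coordinate determined solely by the choice in the last pair: choosing column $2k-1$ contributes $\varepsilon$ and choosing $2k$ contributes $1-\varepsilon$ to coordinate $k$. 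Exactly one of these two choices matches $x_k$ — call the selected last column $c$, and let $c'\in\F_2^{k-1}$ be its truncation ($c'=v$ in either case, but this is not needed). Having fixed the last-pair choice, the remaining $k-1$ choices must make the columns from pairs $1,\dots,k-1$ (in the truncated matrix $M'$) sum to $x'-c'$. By the induction hypothesis applied to $M'$, there is a unique such choice. Combining, the choice of $j_k$ is forced by $x_k$, and then $j_1,\dots,j_{k-1}$ are forced by the inductive uniqueness; conversely this choice works. This establishes both existence and uniqueness. The ``in particular'' clause follows by taking $x=\0$.

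The step I expect to require the most care is verifying that the induction hypothesis is being applied to the \emph{correct} reduced matrix and target. One must check that truncating the chosen last column and subtracting it from $x'$ interacts correctly with the binormal structure of $M'$ — in particular that the pairs $\{2i-1,2i\}$ for $i\le k-1$ in $M'$ are exactly the images of the corresponding pairs in $M$ under deletion of the two last coordinates/columns, so that the unique selection guaranteed for $M'$ lifts back to a unique selection for $M$. This bookkeeping is routine but is the crux: once the reduction to binormal form of smaller size is cleanly set up, the coordinate-$k$ argument that pins down $j_k$ is short, and the rest is the induction.
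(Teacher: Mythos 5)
Your proof is correct. Note, however, that the paper does not prove this statement at all: it is quoted verbatim as Proposition~2.1 of Enomoto, Frankl, Ito, and Nomura \cite{Enomoto:1987}, so there is no in-paper argument to compare against. Your induction is sound — deleting row $k$ and columns $2k-1,2k$ does preserve binormal form, the choice $j_k$ is pinned down by coordinate $k$ alone, and the remaining pairs reduce cleanly to the $(k-1)$-dimensional case. It is worth observing that the induction can be collapsed into a one-step direct argument: for each fixed $i$, the $i$-th coordinate of $\sum_{l}\mathrm{col}_{j_l}$ equals the constant $\sum_{l\neq i}a_{i,2l}$ plus $a_{i,j_i}$, since $a_{i,j_l}=a_{i,2l-1}=a_{i,2l}$ is independent of the choice for $l\neq i$; because $a_{i,2i-1}\neq a_{i,2i}$, exactly one choice of $j_i$ realizes $x_i$, and these $k$ constraints are decoupled, giving existence and uniqueness simultaneously. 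Your inductive step is precisely this observation applied one coordinate at a time, so the two arguments are essentially the same; the direct version just avoids the bookkeeping about truncated matrices that you correctly flag as the delicate part.
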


\begin{lemma}[Lemma~2.2 \cite{Enomoto:1987}]\label{th:Enomoto2}
Let $n$ be odd, $2k < n$, 
and $M$ be a $k \times n$ binary matrix of rank $k$. 
If the sum of entries in each row is $0$, 
then M can be brought to binormal form by such operations as 
permutations of the columns and additions of one row to another.
\end{lemma}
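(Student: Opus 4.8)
The plan is to translate the lemma into a statement about multisets of vectors in $\F_2^k$ and prove that by induction on $k$.

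Reduction. Regard the columns of $M$ as vectors $v_1,\dots,v_n\in\F_2^k$. A column permutation permutes the $v_j$, while adding one row of $M$ to another replaces every $v_j$ by $Pv_j$ for a single elementary matrix $P\in GL_k(\F_2)$; since such operations generate $GL_k(\F_2)$, every invertible linear change of the columns is available. Both kinds of operation preserve the rank and the property ``every row has even weight'' (for row additions because $\mathrm{wt}(u+v)\equiv\mathrm{wt}(u)+\mathrm{wt}(v)\pmod 2$). Under this dictionary, $\mathrm{rank}\,M=k$ becomes ``$\{v_j\}$ spans $\F_2^k$'' and ``all rows have even weight'' becomes $\sum_j v_j=\0$. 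Finally, a $k\times n$ matrix is in binormal form exactly when $n\ge 2k$ and its columns can be reordered so that columns $2i-1$ and $2i$ sum (in $\F_2^k$) to the $i$th standard basis vector $e_i$, for $i=1,\dots,k$. So the lemma reduces to the claim $(\ast)$: \emph{if $v_1,\dots,v_n\in\F_2^k$ span $\F_2^k$, $\sum_j v_j=\0$, $n$ is odd, and $n>2k$, then there exist $k$ pairwise‑disjoint pairs among the $v_j$ whose $k$ sums form a basis of $\F_2^k$.} Indeed, from such pairs one takes the unique $P\in GL_k(\F_2)$ sending the $k$ sums to $e_1,\dots,e_k$, performs $P$ by row additions, and permutes the corresponding column pairs into positions $(1,2),(3,4),\dots,(2k-1,2k)$, placing the remaining $n-2k$ columns last.

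Induction and the ``good pair''. Prove $(\ast)$ by induction on $k$; for $k\le 1$ it is immediate (for $k=1$, an even and positive number of $1$'s among an odd number of entries leaves at least one $0$, so pair a $1$ with a $0$). For the step it suffices to find a \emph{good pair}: indices $a\ne b$ with $v_a\ne v_b$ such that $\{v_j:j\ne a,b\}$ still spans $\F_2^k$. Given one, pass to $\F_2^k/\langle v_a+v_b\rangle\cong\F_2^{k-1}$: the $n-2$ images span it, sum to $\0$, and $n-2$ is odd with $n-2>2(k-1)$, so the inductive hypothesis yields $k-1$ disjoint pairs among those vectors whose sums reduce to a basis of the quotient; lifting, these $k-1$ sums together with $v_a+v_b$ form a basis of $\F_2^k$, and adjoining $\{v_a,v_b\}$ gives the required $k$ pairs.

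The crux: a good pair always exists. This is the only real step, and it is where $\sum_j v_j=\0$ and the oddness of $n$ are used essentially (without them $(\ast)$ is false). Suppose there is no good pair. Since $\sum_j v_j=\0$ forces $\phi(v_a)=\phi(v_b)$ whenever a functional $\phi$ kills $\{v_j:j\ne a,b\}$, ``no good pair'' just says that deleting any two distinct‑valued vectors leaves a non‑spanning set. Split by the multiplicities of the values. If two distinct values each occur at least twice, delete one copy of each: the set of values is unchanged, so the rest still spans — contradiction; hence at most one value repeats. If exactly one value $v^{*}$ repeats, then pairing a deletion of one $v^{*}$ with each non‑repeated value $u_j$ shows each $u_j$ is linearly independent from the remaining vectors, so the $u_j$ are independent and, as $\{v^{*}\}\cup\{u_j\}$ spans, there are $k-1$ or $k$ of them; in each subcase a short computation from $\sum_j v_j=\0$ — using the parity of the multiplicity of $v^{*}$ in the first, and deducing $v^{*}=\0$ in the second — contradicts independence of the $u_j$. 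Finally, if all values are distinct, pick a basis $b_1,\dots,b_k$ among the $v_j$ and delete $\{b_1,b_2\}$: the remaining $n-2$ vectors span a proper subspace containing $\langle b_3,\dots,b_k\rangle$, and it cannot equal $\langle b_3,\dots,b_k\rangle$ (else $\sum_j v_j$ would lie outside that subspace, hence be nonzero), so it is a hyperplane $H$; then every $v_j$ except at most $b_1,b_2$ lies in $H$, so at least $n-2>2(k-1)=2\dim H$ distinct vectors span $H$, and any such collection contains two members whose deletion keeps it spanning $H$ — which is a good pair, the final contradiction. Thus $(\ast)$ holds, and with it the lemma. The main obstacle is precisely this last paragraph — excluding the degenerate configurations (a single heavily repeated value, or an ``isolated'' hyperplane's worth of vectors) — where the hypotheses $\sum_j v_j=\0$ and $n$ odd have to be deployed carefully; everything else is bookkeeping.
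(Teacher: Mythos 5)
The paper does not prove this lemma at all --- it is imported verbatim as Lemma~2.2 of Enomoto, Frankl, Ito and Nomura \cite{Enomoto:1987} --- so there is no in-paper argument to compare yours against; I can only assess your proof on its own, and it is correct. The reduction is accurate: row additions realize all of $GL_k(\F_2)$ on the columns, and ``can be brought to binormal form'' is exactly the existence of $k$ disjoint column pairs whose sums form a basis of $\F_2^k$, so the lemma is equivalent to your statement $(\ast)$. The induction is sound: passing to the quotient by $\langle v_a+v_b\rangle$ preserves all four hypotheses, including oddness of $n$, which is genuinely needed (for even $n$ the claim already fails at $k=1$, e.g.\ four copies of $1$ in $\F_2$). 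The case analysis for the existence of a good pair is complete: two repeated values yield a good pair at once; a single repeated value $v^*$ forces the singleton values to be independent with $t\in\{k-1,k\}$, and both subcases are killed by $\sum_j v_j=\0$ (via the parity of the multiplicity of $v^*$, resp.\ via $v^*=\0$); all-distinct values force the complement of $\{b_1,b_2\}$ to span a hyperplane $H$ (it cannot span only $\langle b_3,\dots,b_k\rangle$ because the total sum would then be nonzero), and since $n-2>2\dim H$ one can discard two non-basis vectors of $H$ while at least one of $b_1,b_2$ lies outside $H$, producing a good pair. I checked the boundary details (that $t\ge 1$ in the one-repeated-value case because $k\ge 2$ in the inductive step, and that the pair found inside $H$ avoids $b_1,b_2$ and consists of distinct vectors) and they all hold, so this is a valid, self-contained replacement for the cited proof.
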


Lemmas \ref{th:Enomoto1} and \ref{th:Enomoto2} yield 

\begin{corollary}\label{th:Enomoto}
Let $n$ be odd and $2k<n$. 
Let $M$ be a $k \times n$ binary matrix of rank $k$ 
where the sum of entries in each row is $0$. 
One can pick up $k$ columns in $M$ whose sum is the $k$-dimensional zero vector.
\end{corollary}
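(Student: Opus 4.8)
The plan is to derive Corollary~\ref{th:Enomoto} directly by composing the two preceding lemmas, since \Cref{th:Enomoto2} provides the mechanism for reducing to binormal form and \Cref{th:Enomoto1} then furnishes the desired zero-sum collection of columns. The key observation is that the two column-operations allowed in \Cref{th:Enomoto2}, namely permuting columns and adding one row to another, both preserve the property we ultimately care about: if some $k$ columns of the transformed matrix sum to zero, then the corresponding $k$ columns of the original matrix also sum to zero. Permuting columns merely relabels which columns we select, and adding row $i$ to row $i'$ is realized by left-multiplication by an invertible matrix $P \in \mathrm{GL}_k(\F_2)$, so that a set $S$ of columns of $PM$ sums to zero exactly when the same set $S$ of columns of $M$ sums to zero (apply $P^{-1}$).

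The steps, in order, are as follows. First I would note that the hypotheses of \Cref{th:Enomoto} — $n$ odd, $2k < n$, $M$ of rank $k$, each row summing to $0$ — are precisely the hypotheses of \Cref{th:Enomoto2}, so that lemma applies and yields a matrix $M'$ in binormal form obtained from $M$ by column permutations and row additions. Second, I would invoke \Cref{th:Enomoto1} with $x = \0$ (the $k$-dimensional zero vector) to produce a choice of $k$ columns of $M'$, one from each pair $\{2i-1, 2i\}$, whose sum is $\0$. Third, I would translate this back: since the operations taking $M$ to $M'$ are (a composition of) a column permutation $\pi$ and a left multiplication by some $P \in \mathrm{GL}_k(\F_2)$, the columns of $M$ indexed by $\pi^{-1}$ applied to the chosen indices have sum $P^{-1}\0 = \0$. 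Hence $k$ columns of $M$ itself sum to the zero vector, which is the conclusion.

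There is essentially no hard part: the corollary is a formal consequence of the two lemmas, and the only thing requiring a sentence of care is the bookkeeping in the third step — verifying that ``sum of a chosen set of columns equals a prescribed vector'' behaves correctly under the invertible row operations and the column permutation. I would spell that out explicitly (row additions correspond to an invertible linear change of coordinates on $\F_2^k$, which sends $\0$ to $\0$; column permutations just relabel the index set) so that the reader sees the deduction is airtight, and then conclude.
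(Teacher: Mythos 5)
Your proposal is correct and follows exactly the route the paper intends: the paper derives Corollary~\ref{th:Enomoto} by composing Lemma~\ref{th:Enomoto2} (reduction to binormal form) with Lemma~\ref{th:Enomoto1} (choosing $k$ columns summing to $\0$), with the invariance of the zero-sum property under column permutations and row additions recorded separately in Observation~\ref{th:operations}. Your explicit bookkeeping via left multiplication by an invertible matrix is just a careful spelling-out of that same invariance.
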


\section{Generalized Erd\H{o}s--Ginzburg--Ziv constant}
\label{sec:EGZG}

Let $G$ be a finite abelian group written additively. 
The classical Erd\H{o}s--Ginzburg--Ziv constant $\s(G)$ is the smallest integer $s$ 
such that every sequence of length $s$ over $G$ 
has a zero-sum subsequence of length $\exp(G)$ 
(see 
\cite{Edel:2007,Ellenberg:2017,Gao:2006,Gao:2003,Harborth:1973,Kemnitz:1983,Reiher:2007}). 
In 1961, Erd\H{o}s, Ginzburg, and Ziv \cite{Erdos:1961} proved 
$\s(\Z_k)=2k-1$. 
Kemnitz' conjecture, $\s(\Z_k^2)=4k-3$ (see~\cite{Kemnitz:1983}), 
was open for more than twenty years 
and finally was proved by Reiher~\cite{Reiher:2007} in 2007.

The following generalization of the classical Erd\H{o}s--Ginzburg--Ziv constant
was introduced by Gao~\cite{Gao:2003}. 
If $r$ is a multiple of $\exp(G)$, 
then $\s_r(G)$ denotes the smallest integer $s$
such that every sequence of length $s$ over $G$ 
has a zero-sum subsequence of length $r$. 
(Notice that if $r$ is not a multiple of $\exp(G)$, 
then there is an element $x \in G$ whose order is not a divisor of $r$,
and the infinite sequence 
$x,x,x,\ldots$ contains no zero-sum subsequence of length $r$.) 
Obviously, $\s_{\exp(G)}(G)=\s(G)$. 
Constants $\s_r(G)$ were studied in 
\cite{Bitz:2017,Gao:2014,Gao:2006b,Gao:2003,Han:2018,Han:2019,He:2016,Kubertin:2005}.

A sequence that consists of $(km-1)$ copies of the zero vector 
and $(k-1)$ copies of each of the basis vectors demonstrates that
\begin{equation}\label{eq:s_lower}
  \s_{km}(\Z_k^d) \; \geq \; km + (k-1)d \; .
\end{equation}
If $km \leq (k-1)d$, we can add $(1,1,\dots,1)$ to this sequence. 
Hence,
\begin{equation}\label{eq:s_lower2}
  \s_{km}(\Z_k^d) \; \geq \; km + (k-1)d + 1
    \;\;\;\;\;\mbox{when}\;\;\; km \leq (k-1)d \; .
\end{equation}
It is easy to see that 
\begin{equation}\label{eq:recursive}
  \s_{km}(\Z_k^d) + (k-1) \; \leq \; \s_{km}(\Z_k^{d+1}) \; .
\end{equation}
Indeed, consider a sequence $S$ over $\Z_k^d$ 
that does not have zero-sum subsequences of size $km$. 
Attach $0$ to each vector in $S$ as the $(d+1)$th entry
and add to the sequence $(k-1)$ copies of a vector 
whose $(d+1)$th entry is $1$. 
The resulting sequence over $\Z_k^{d+1}$ 
will not contain a zero-sum subsequence of length $km$, either. 

In the case when $k$ is a power of a prime, 
Gao~\cite{Gao:2003} proved the equality in \eqref{eq:s_lower} 
for $m \geq k^{d-1}$ 
and conjectured 
\begin{equation}\label{eq:Gao}
  \s_{km}(\Z_k^d) \; = \; km + (k-1)d 
    \;\;\;\;\;\mbox{for}\;\;\; km > (k-1)d \; .
\end{equation}

The connection between generalized Erd\H{o}s--Ginzburg--Ziv constants of $\Z_2^d$  
and linear binary codes is evident from the following observation. 
Let $S$ be a sequence of length $n$ over $\Z_2^d$. 
Write its $n$ vectors column-wise to get a $d \times n$ binary matrix $M$. 
Obviously, $S$ has a zero-sum subsequence of length $r$ if and only if 
$M$ has $r$ columns that sum up to a zero vector. 
Let $C$ be the subspace in $\Z_2^n$ generated by the rows of $M$.
If $M$ has $r$ columns that sum up to a zero vector, 
then the same will be true for any basis of $C$ written row-wise. 
The $n$-dimensional vector, 
whose non-zero entries are positioned in these $r$ columns, 
will be orthogonal to any word of $C$. 
It means that the dual code $C^\perp$ has a word of weight $r$. 
The same arguments work in the opposite way, too. 
If a linear binary code has a word of weight $r$ 
then any its parity check matrix has $r$ columns 
that sum up to a zero vector. 

When $k>2$ is a power of a prime, a similar connection exists between 
the generalized Erd\H{o}s--Ginzburg--Ziv constants of $\Z_k^d$ 
and linear $k$-ary codes (which are subspaces of vector spaces over field $\F_k$), 
but unfortunately, it works only one way. 
If a sequence over $\F_k^d$ has a zero-sum subsequence of length $r$, 
then being written column-wise, it serves as a parity check matrix of a $k$-ary code 
which has a word whose $r$ entries are equal to $1$ and the rest are equal to $0$. 
However, the fact that a $k$-ary code has a word with $r$ nonzero entries 
does not guarantee that its parity-check matrix has $r$ columns 
that sum up to a zero vector.

\section{Summary of results}\label{sec:summary}

In this section, we consider the case $G=\Z_2^d$. 
We will show (see Theorem~\ref{th:s_m_small}) 
that Gao's conjecture \eqref{eq:Gao} holds for $k=2$.
We will also show that 
constants $N(d,5)$ from \Cref{sec:codes} and $\s_4(\Z_2^d)$ 
are equivalent (namely, $\s_4(\Z_2^d) = N(d,5)+4$) 
while constants $R_{2m}(n)$ from \cref{sec:forbidden} 
and $\s_{2m}(\Z_2^d)$ are closely related. 

To simplify notation, we will write $\s_{2m}(d)$ instead of $\s_{2m}(\Z_2^d)$. 
Let $W$ be a set of positive integers which contains at least one even number. 
We denote by $\beta_W(d)$ the largest size of a set in $\Z_2^d$ 
which has no zero-sum subsets of size $w \in W$. 
We will use the following shortcuts:
\[
  \beta_{2m}(d) = \beta_{\{2m\}}(d), \;\;\;\;
  \beta_{2[k,m]}(d) = \beta_{\{2k,2k+2,\ldots,2m\}}(d), 
\]
\[
  \beta_{[1,2m]}(d) = \beta_{\{1,2,\ldots,2m\}}(d).
\]
As it should be expected, $\s_{2m}(d)$ and $\beta_{2m}(d)$ are close:
\begin{equation}\label{eq:beta_s_beta}
  \beta_{2m}(d) + 1
    \; \leq \;
  \s_{2m}(d)
    \; \leq \;
  \beta_{2m}(d) + 2m-1
  \; .
\end{equation}
The lower bound in \eqref{eq:beta_s_beta} is trivial.
The upper bound was proved in \cite[Theorem~4.1]{Sidorenko:2017}.
The set of $d$ basis vectors in $\Z_2^d$ 
with addition of vector $(1,1,\ldots,1)$ 
demonstrates that 
\begin{equation}\label{eq:even_beta_1}
  \beta_{[1,2m]}(d) \; \geq \; d+1 \;\;\;\;\mbox{for}\;\; d \geq 2m \; .
\end{equation}
If $d \geq 3m$, there exist vectors $x,y\in\Z_2^d$ such that 
each of the three vectors $x,y,x+y$ has Hamming weight $2m$. 
Then $x,y$ and the $d$ basis vectors demonstrate that 
\begin{equation}\label{eq:even_beta_2}
  \beta_{[1,2m]}(d) \; \geq \; d+2 \;\;\;\;\mbox{for}\;\; d \geq 3m \; .
\end{equation}

\begin{theorem}\label{th:beta_123_2m}
$\;
  \beta_{[1,2m]}(d) \; = \; N(d,2m+1) \; .
$
\end{theorem}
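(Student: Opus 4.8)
The plan is to push the dictionary between sequences over $\Z_2^d$ and binary parity-check matrices from \cref{sec:EGZG} one step further. Given a set $S=\{v_1,\dots,v_n\}\subseteq\Z_2^d$, I would write its elements as the columns of a $d\times n$ matrix $M$. For a subset $I\subseteq\{1,\dots,n\}$ one has $\sum_{i\in I}v_i=\0$ exactly when the columns of $M$ indexed by $I$ are linearly dependent over $\F_2$; hence $S$ has no zero-sum subset of size $w$ for any $w\in\{1,\dots,2m\}$ if and only if every set of at most $2m$ columns of $M$ is linearly independent. (This already forces the columns to be nonzero and pairwise distinct, so every such $M$ genuinely comes from a set.) By the standard description of minimum distance in terms of a parity-check matrix, a rank-$d$ matrix of this kind is precisely a parity-check matrix of a linear binary code of length $n$, redundancy $d$, and distance at least $2m+1$.

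The inequality $\beta_{[1,2m]}(d)\ge N(d,2m+1)$ is then immediate: take an $(n,n-d,\ge 2m+1)$ code with $n=N(d,2m+1)$, and read off the columns of a parity-check matrix of it as a set in $\Z_2^d$ of size $n$ with no zero-sum subset of size $\le 2m$.

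For the reverse inequality, let $S$ be such a set of maximum size $n=\beta_{[1,2m]}(d)$ with associated matrix $M$, and set $r=\operatorname{rank}(M)$. Row operations preserve linear dependences among columns, so I can bring $M$ to a form with $d-r$ zero rows and delete them, obtaining an $r\times n$ matrix of rank $r$ in which every $\le 2m$ columns are independent; this is a parity-check matrix of an $(n,n-r,\ge 2m+1)$ code, so $n\le N(r,2m+1)$. Finally $N(r,2m+1)\le N(d,2m+1)$ because $N(\cdot,2m+1)$ is nondecreasing: adjoining to a parity-check matrix one new row and one new column meeting in a single $1$ (and $0$ elsewhere) raises the redundancy by one and creates no new low-weight codewords.

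The whole argument is a translation of definitions once the matrix/code correspondence is set up; the only point needing a little care — and the one I would call the main obstacle — is the rank normalization in the last step, which is necessary because an extremal set need not span $\Z_2^d$.
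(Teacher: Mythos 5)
Your proof is correct and rests on the same dictionary the paper uses: columns of a $d\times n$ parity-check matrix correspond to a subset of $\Z_2^d$, and the absence of zero-sum subsets of sizes $1,\dots,2m$ corresponds to distance at least $2m+1$. The only place you diverge from the paper is the rank-normalization step that you rightly identify as the crux of the upper bound. The paper resolves it by exploiting maximality of the extremal set $A$ structurally: if $A$ did not span $\Z_2^d$, one could adjoin a vector $x$ outside the span of $A$ without creating any short zero-sum subset (a zero-sum subset through $x$ would express $x$ as a subset sum of $A$), contradicting $|A|=\beta_{[1,2m]}(d)$; hence the matrix already has rank $d$. You instead delete dependent rows to get a rank-$r$ parity-check matrix, conclude $n\le N(r,2m+1)$, and finish with the monotonicity $N(r,2m+1)\le N(d,2m+1)$ for $r\le d$, proved by the standard ``append a new row and a new column meeting in a single $1$'' construction, which fixes the dimension, raises the redundancy by one, and leaves the weight distribution of the code unchanged. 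Both arguments are sound and of comparable length; yours needs the (easy, standard) monotonicity lemma, while the paper's needs the maximality of $A$ in a slightly stronger way than just its cardinality. One small imprecision: your claim that, for a fixed index set $I$, one has $\sum_{i\in I}v_i=\0$ \emph{exactly when} the columns indexed by $I$ are linearly dependent is false as stated (a proper subset of $I$ could carry the dependence while the full sum over $I$ is nonzero); but the quantified equivalence you actually use --- no zero-sum subset of size at most $2m$ if and only if every at most $2m$ columns are independent --- is correct, since over $\F_2$ any dependence is witnessed by a subset summing to zero.
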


\begin{theorem}\label{th:beta_24_2m}
$\;
  \beta_{2[1,m]}(d) \; = \; \beta_{[1,2m]}(d) + 1 \; .
$
\end{theorem}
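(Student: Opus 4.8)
The plan is to prove the two inequalities $\beta_{2[1,m]}(d) \ge \beta_{[1,2m]}(d)+1$ and $\beta_{2[1,m]}(d) \le \beta_{[1,2m]}(d)+1$ separately, in each case by explicitly modifying an extremal set; the only tool needed beyond parity bookkeeping is translation of a set by one of its own elements. At the outset it is worth noting that in $\Z_2^d$ the condition ``no zero-sum subset of size $1$'' just says $0$ is excluded, and ``no zero-sum subset of size $2$'' is vacuous (two distinct vectors never sum to $0$), so $\beta_{2[1,m]}(d)$ is the largest size of a set with no zero-sum subset of any even size $\le 2m$, and $\beta_{[1,2m]}(d)$ the largest with no zero-sum subset of any size $\le 2m$.

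For the lower bound I would take a set $T\subseteq\Z_2^d$ of size $\beta_{[1,2m]}(d)$ having no zero-sum subset of size in $\{1,2,\dots,2m\}$; in particular $0\notin T$. The claim is that $T\cup\{0\}$ has no zero-sum subset of even size $\le 2m$. Any such subset is either contained in $T$, in which case its sum is nonzero since $T$ avoids all zero-sum subsets of size $\le 2m$, or it is $\{0\}$ together with a $(2\ell-1)$-element subset of $T$ with $2\ell\le 2m$, whose sum again cannot vanish because $2\ell-1\le 2m$. Hence $T\cup\{0\}$ is admissible for $\beta_{2[1,m]}(d)$ and has $\beta_{[1,2m]}(d)+1$ elements.

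For the upper bound I would take any admissible set $S$ for $\beta_{2[1,m]}(d)$; it is nonempty since singletons are admissible. Pick $v\in S$ and set $S':=(S+v)\setminus\{0\}$, a set of $|S|-1$ elements with $0\notin S'$. I claim $S'$ has no zero-sum subset of size $\le 2m$ at all. A $k$-element subset of $S'$ has the form $\{x_i+v : i\in I\}$ with $|I|=k$ and the $x_i$ distinct in $S\setminus\{v\}$, and its sum is $\sum_i x_i + kv$. If $k$ is even this equals $\sum_i x_i$, the sum of a $k$-element subset of $S$ with $k\le 2m$, hence nonzero by admissibility of $S$; if $k$ is odd it equals $v+\sum_i x_i$, the sum of the $(k+1)$-element subset $\{v\}\cup\{x_i : i\in I\}$ of $S$ (genuinely of size $k+1$, as $v\notin\{x_i\}$), which is nonzero because $k$ odd and $k\le 2m$ force $k+1\le 2m$. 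Thus $S'$ is admissible for $\beta_{[1,2m]}(d)$, so $|S|-1\le \beta_{[1,2m]}(d)$; taking $S$ extremal yields the bound.

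The argument presents no serious obstacle once the translation trick is in place. The single point that requires care is the parity accounting in the upper bound: a forbidden (odd size, $\le 2m$) zero-sum subset of $S'$ pulls back to a forbidden (even size, $\le 2m$) subset of $S$, and this works precisely because an odd integer that is at most $2m$ is in fact at most $2m-1$, so incrementing the size keeps it within the forbidden range.
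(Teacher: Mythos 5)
Your proof is correct and follows essentially the same route as the paper: adjoin $0$ to an extremal set for the lower bound, and translate an extremal set by one of its own elements and delete $0$ for the upper bound, with the same parity bookkeeping (an odd-size zero-sum subset of the translated set corresponds to an even-size one of the original after reattaching the translating element). The only difference is that you carry out the verification in slightly more explicit detail than the paper does.
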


It is easy to see that $\beta_{2m}(d) = 2^d\;$ if $\;2m > 2^d$. 

\begin{theorem}\label{th:beta_d_small}
$
  \beta_{2m}(d) \; = \; 
  \begin{cases} 
   2m + 2, & \mbox{if }\; 2^{d-1} \leq 2m < 2^d,\; m \neq 2^{d-1} - 2, \\
   2m    , & \mbox{if }\; m = 2^{d-1} - 2.
  \end{cases}
$
\end{theorem}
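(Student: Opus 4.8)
\emph{Proof sketch.}
Call a set $A\subseteq\Z_2^d$ \emph{admissible} if it contains no zero-sum subset of size $2m$; then $\beta_{2m}(d)$ is the largest cardinality of an admissible set, and since every subset of an admissible set is again admissible, to prove an upper bound $\beta_{2m}(d)\le N_0$ it suffices to rule out admissible sets of the single size $N_0+1$. Write $\sigma(S)=\sum_{x\in S}x$. Note $d\ge 2$ (forced by $2^{d-1}\le 2m<2^d$), so $\sigma(\Z_2^d)=\0$ and $\sigma(A\setminus D)=\sigma(A)+\sigma(D)$ for $D\subseteq A$; in particular, if $\sigma(A)=\0$ then deleting two elements $u\ne v$ of $A$ leaves a set of sum $u+v\ne\0$, so \emph{any set of size $2m+2$ with zero sum is admissible}. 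This observation will carry the lower bound in the generic case, where the plan is simply to construct such a set.

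\textbf{Upper bound.}
The plan is to show that no admissible set of size $2m+3$ exists, which yields $\beta_{2m}(d)\le 2m+2$. If $2m+3>2^d$ — that is, $2m=2^d-2$ — there is nothing to prove, so let $A\subseteq\Z_2^d$ with $|A|=2m+3\le 2^d$; since $2m\ge 2^{d-1}$ we have $|A|\ge 2^{d-1}+3$, hence $|\Z_2^d\setminus A|\le 2^{d-1}-3$. Put $s=\sigma(A)$, pick $c\in A$ with $c\ne s$, and set $w=s+c\ne\0$. The $2^{d-1}$ two-element sets $\{x,x+w\}$ partition $\Z_2^d$, and at most $2^{d-1}-3$ of them meet $\Z_2^d\setminus A$, so at least three lie entirely in $A$. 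The only one of these that can contain $c$ is $\{c,\,c+w\}=\{c,s\}$, so at least two of them are pairs $\{a,b\}\subseteq A$ with $a+b=w$ and $c\notin\{a,b\}$. For such a pair, $a,b,c$ are three distinct elements of $A$ with $a+b+c=w+c=s$, so $A\setminus\{a,b,c\}$ has size $2m$ and sum $\sigma(A)+(a+b+c)=s+s=\0$; thus $A$ is not admissible. This is the only genuinely non-trivial step — the content is that a subset of $\Z_2^d$ of more than half the size writes every group element as a sum of three of its distinct members — and I expect the pair-counting above to be the shortest route to it.

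\textbf{Lower bound when $m\ne 2^{d-1}-2$.}
By the observation above it suffices to exhibit a set of size $2m+2$ with zero sum, equivalently a set $B$ of size $\ell:=2^d-2m-2$ with $\sigma(B)=\0$; here $\ell$ is even, $0\le\ell\le 2^{d-1}-2$, and $\ell\ne 2$. For $\ell=0$ take $B=\emptyset$. For $\ell\ge 4$ (which forces $d\ge 4$) I would assemble $B$ out of zero-sum blocks: the $2^{d-2}$ cosets of $\langle e_1,e_2\rangle$ each have zero sum, and the six distinct elements $e_1,e_2,e_1+e_2,e_3,e_4,e_3+e_4$ of $\langle e_1,e_2,e_3,e_4\rangle$ also sum to $\0$. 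When $4\mid\ell$, let $B$ be a union of $\ell/4$ of those cosets; when $\ell\equiv 2\pmod 4$, let $B$ consist of the six-element set together with $(\ell-6)/4$ cosets of $\langle e_1,e_2\rangle$ disjoint from $\langle e_1,e_2,e_3,e_4\rangle$. A short count, using $\ell\le 2^{d-1}-2$, shows that the required number of cosets is available. Hence $\beta_{2m}(d)\ge 2m+2$, and with the upper bound, $\beta_{2m}(d)=2m+2$.

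\textbf{The exceptional case $m=2^{d-1}-2$} (so $d\ge 3$ and $2m=2^d-4$).
For the lower bound, $A=\Z_2^d\setminus\{\0,e_1,e_2,e_3\}$ has size $2m$ and $\sigma(A)=e_1+e_2+e_3\ne\0$, so its unique subset of size $2m$ — namely $A$ itself — is not zero-sum, giving $\beta_{2m}(d)\ge 2m$. For the matching upper bound it is enough to rule out admissible sets of size $2m+1=2^d-3$: if $A=\Z_2^d\setminus\{x,y,z\}$, then $u:=\sigma(A)=x+y+z$ differs from each of $x,y,z$, so $u\in A$ and $A\setminus\{u\}$ has size $2m$ and zero sum. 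Therefore $\beta_{2m}(d)=2m$ in this case. The remaining work — carrying this case out in full and checking that the reductions above are valid in the small-dimension corners $d=2,3$ — is elementary bookkeeping, so the crux of the whole argument is the pair-counting in the upper bound.
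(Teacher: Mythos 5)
Your proof is correct and follows essentially the same route as the paper: the upper bound via removing one element $c\neq\sigma(A)$ and then finding, by a counting/pigeonhole argument over pairs summing to the residual sum, three elements of $A$ whose removal leaves a zero-sum set of size $2m$ (this is exactly the content of the paper's Lemmas on sets of size $>2^{d-1}$), the lower bound via a zero-sum set of size $2m+2$, and the complement argument in the exceptional case $m=2^{d-1}-2$. The only difference is cosmetic: you spell out an explicit coset construction of the zero-sum set of size $2m+2$, whose existence the paper simply asserts.
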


\begin{theorem}\label{th:beta_d-1}
Let $m$ be odd. 
Then $\beta_{2m}(d) \geq 2\beta_{2[1,m]}(d-1)$. 
If $b$ is even and $m < b \leq \beta_{2[1,m]}(d-1)$, 
then $\beta_{2b-2m}(d) \geq 2b$.
\end{theorem}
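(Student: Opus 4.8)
The plan is to prove both inequalities with a single doubling construction. Starting from a set $A\subseteq\Z_2^{d-1}$ that has no zero-sum subset of even size at most $2m$, I would form
\[
  T \;=\; \{(v,0) : v\in A\}\,\cup\,\{(v,1) : v\in A\}\;\subseteq\;\Z_2^{d},
\]
using the last coordinate as a parity flag; clearly $|T|=2|A|$. For the first inequality I take $|A|=\beta_{2[1,m]}(d-1)$, and for the second I take $|A|=b$, which is available because $b\leq\beta_{2[1,m]}(d-1)$ and every subset of a set with no zero-sum subset of size in $\{2,4,\ldots,2m\}$ inherits that property.

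The core step is to check that $T$ has no zero-sum subset of size $2m$. Given such a subset $B$, let $S_0=\{v\in A : (v,0)\in B\}$ and $S_1=\{v\in A : (v,1)\in B\}$, so $|S_0|+|S_1|=2m$. Reading off the last coordinate of $\sum_{x\in B}x$ forces $|S_1|$, hence also $|S_0|$, to be even. Reading off the first $d-1$ coordinates gives $\sum_{u\in S_0\triangle S_1}u=\0$, so $S_0\triangle S_1$ is a zero-sum subset of $A$ whose size is even and at most $|S_0|+|S_1|=2m$; by the choice of $A$ it must be empty, i.e.\ $S_0=S_1$, and then $|S_0|=m$ --- contradicting that $|S_0|$ is even while $m$ is odd. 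This yields $\beta_{2m}(d)\geq 2\beta_{2[1,m]}(d-1)$.

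For the second inequality, the extra ingredient is that $b$ even makes $\sum_{x\in T}x=\sum_{v\in A}\bigl((v,0)+(v,1)\bigr)=(0,\dots,0,b\bmod 2)=\0$. Consequently a subset $B\subseteq T$ of size $2b-2m$ is zero-sum if and only if its complement $T\setminus B$, of size $2m$, is zero-sum; by the previous paragraph $T$ has no zero-sum subset of size $2m$, so it has none of size $2b-2m$, and $|T|=2b$ gives $\beta_{2b-2m}(d)\geq 2b$.

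The only place that requires care is the parity bookkeeping in the core step: one must treat $S_0$ and $S_1$ as genuine subsets of $A$ so that the sum $\sum_{v\in S_0}v+\sum_{w\in S_1}w$ collapses over the symmetric difference, and then verify that $S_0\triangle S_1$, if nonempty, has size strictly inside the forbidden range $\{2,4,\ldots,2m\}$ --- which is exactly where the hypothesis $|B|=2m$ supplies the needed upper bound. Once this is in place the roles of the hypotheses are transparent: $b\leq\beta_{2[1,m]}(d-1)$ supplies $A$, $b$ even makes $\sum T=\0$, $b>m$ keeps $2b-2m$ a positive even target, and $m$ odd is precisely what eliminates the residual case $S_0=S_1$.
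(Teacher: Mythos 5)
Your proof is correct and follows essentially the same route as the paper: the same doubling construction $A_0\cup A_1$ with a parity flag in the last coordinate, the same symmetric-difference argument using the oddness of $m$ to rule out the case $S_0=S_1$, and the same complementation trick (via $\sum T=\0$ for even $b$) for the second inequality. No gaps.
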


\begin{theorem}\label{th:s_max_beta}
${\displaystyle \; 
  \s_{2m}(d) \; = \;
  1 + \max_{1 \leq j \leq m} \left\{
    \beta_{2[j,m]}(d) + (2m-2j)
  \right\}
}$.
\end{theorem}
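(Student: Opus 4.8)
The plan is to prove the two inequalities separately. For the lower bound $\s_{2m}(d) \geq 1 + \beta_{2[j,m]}(d) + (2m-2j)$ for each $j$ with $1 \leq j \leq m$, I would construct an explicit sequence over $\Z_2^d$ of the stated length having no zero-sum subsequence of length $2m$. Take a set $A \subseteq \Z_2^d$ of size $\beta_{2[j,m]}(d)$ with no zero-sum subset of any even size in $\{2j, 2j+2, \ldots, 2m\}$, regard it as a sequence of distinct vectors, and append $2m-2j$ copies of the zero vector $\0$. A zero-sum subsequence of length $2m$ of the resulting sequence would use some $t$ copies of $\0$ (with $0 \leq t \leq 2m-2j$) together with a zero-sum sub\emph{set} of $A$ of size $2m-t$; since $2j \leq 2m-t \leq 2m$ and $2m-t$ is even, this contradicts the defining property of $A$. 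Hence the sequence witnesses $\s_{2m}(d) - 1 \geq \beta_{2[j,m]}(d) + (2m-2j)$, and taking the maximum over $j$ gives one direction.

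For the upper bound I would take a sequence $S$ over $\Z_2^d$ of length $\ell := 1 + \max_j\{\beta_{2[j,m]}(d)+(2m-2j)\}$ and show it has a zero-sum subsequence of length $2m$. Write $S$ as $\0^t$ followed by a sub\emph{set} $A$ of $2m \geq \ell - t$... more carefully: let $t$ be the multiplicity of $\0$ in $S$; after discarding one copy of $\0$ if $t \geq 1$, or using the trivial lower bound in \eqref{eq:beta_s_beta} otherwise, the non-zero part behaves like a set. If $t \geq 2m$ we are done immediately using $2m$ zeros. Otherwise set $j := m - \lfloor t/2 \rfloor$ (so that $2m - 2j \leq t$), and observe that the number of distinct nonzero vectors appearing in $S$ is at least $\ell - t - (\text{repetitions}) $; the key point is that any vector repeated in $\Z_2^d$ contributes a zero-sum pair, so after greedily removing such pairs we are left with a genuine set $A$ of distinct vectors, and we must track how the removed pairs can be recombined with copies of $\0$. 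The cleaner route: if $A$ (the underlying set of nonzero values, ignoring multiplicity, enlarged using repeated-value pairs) has size exceeding $\beta_{2[j,m]}(d)$ for the chosen $j$, then $A$ contains a zero-sum subset of some even size $2i$ with $j \leq i \leq m$; padding it with $2m-2i \leq 2m-2j \leq t$ copies of $\0$ yields the desired zero-sum subsequence of length exactly $2m$.

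The main obstacle is the bookkeeping in the upper bound: one must handle repeated non-zero vectors carefully, since a vector occurring with multiplicity $\mu$ in $S$ supplies $\lfloor \mu/2 \rfloor$ internal zero-sum pairs, and these pairs can be merged with copies of $\0$ or with other such pairs to build zero-sum subsequences of various even lengths. The right way to organize this is probably to reduce, via repeated removal of zero-sum pairs, to the situation where $S$ consists of $\0^t$ together with $\beta_{2[j,m]}(d)+1$ \emph{distinct} nonzero vectors for the optimal $j$, and then invoke the definition of $\beta_{2[j,m]}$ directly; one should also separately verify the edge cases $t \in \{0,1\}$ using the trivial bound $\s_{2m}(d) \geq \beta_{2m}(d)+1$ from \eqref{eq:beta_s_beta} and the fact that $j=m$ contributes the term $\beta_{2m}(d)$ to the maximum. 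I expect the clean statement to fall out once the pair-removal reduction is set up correctly, with the matching of even weights $2i \in [2j,2m]$ to padding length $2m-2i$ being the crux of why the max over $j$ is exactly the right expression.
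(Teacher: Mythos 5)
There is a genuine gap in your lower bound. You append $2m-2j$ copies of $\0$ to $A$ and claim that a zero-sum subsequence of length $2m$ using $t$ of these zeros forces a zero-sum subset of $A$ of \emph{even} size $2m-t$. But $t$ may be odd, and the definition of $\beta_{2[j,m]}(d)$ only forbids zero-sum subsets of the even sizes $2j,2j+2,\ldots,2m$; nothing prevents $A$ from containing zero-sum subsets of odd size (for instance $\{x,y,x+y\}$ for distinct nonzero $x,y$, or $\{\0\}$ itself, both compatible with having no zero-sum subsets of sizes $2,4,\ldots,2m$). Concretely, for $j=1$, $m=2$, the set $A=\{x,y,x+y\}$ has no zero-sum subsets of size $2$ or $4$, yet $A$ together with two appended copies of $\0$ contains the zero-sum subsequence $x,y,x+y,\0$ of length $4$. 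The paper's construction is exactly what your argument is missing: append $2m-2j$ extra copies of an element $x\in A$ rather than of $\0$. A length-$2m$ zero-sum subsequence then uses $x$ with some multiplicity $\mu\leq 2m-2j+1$ together with $B\subseteq A\setminus\{x\}$ of size $2m-\mu$, and its sum equals the sum of a subset of $A$ of size $2m-\mu$ or $2m-\mu+1$ according to the parity of $\mu$; either way this size is even and lies in $\{2j,\ldots,2m\}$, so the forbidden-size property of $A$ applies. The parity of the padding is absorbed into whether $x$ itself belongs to the subset of $A$, which is precisely what padding by $\0$ cannot achieve.

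Your upper bound is the right idea --- split the sequence into a ``parity part,'' which is a genuine set, and an even part usable as padding --- and matches the paper's argument in spirit, but the bookkeeping as written is off: you choose $j=m-\lfloor t/2\rfloor$ from the multiplicity $t$ of $\0$ alone, whereas the padding budget must count \emph{all} repeated pairs, not only repeated zeros. The paper writes the multiplicity function as $f=f_1+f_2$ with $f_1\in\{0,1\}$ the parity of $f$ and $f_2=f-f_1$ even; if $\sum_z f_2(z)\geq 2m$ one extracts $2m$ elements in matched pairs and is done, and otherwise $\sum_z f_2(z)=2(m-j)$ for some $j\in\{1,\ldots,m\}$, the support of $f_1$ is a set of size $s-2(m-j)>\beta_{2[j,m]}(d)$, hence has a zero-sum subset of some even size $k\in\{2j,\ldots,2m\}$, which is padded to length $2m$ by $2m-k\leq\sum_z f_2(z)$ paired elements. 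You acknowledge that this pair-removal reduction is needed but do not carry it out, so the upper half is incomplete rather than wrong; the lower half, however, needs the repair described above.
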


\begin{theorem}\label{th:beta_s_4}
$\;
  \s_4(d)
    \; = \;
  \beta_4(d) + 3
    \; = \;
  N(d,5) + 4
    \; .
$
\end{theorem}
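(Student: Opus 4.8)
The plan is to reduce the whole statement to the single identity $\beta_4(d)=N(d,5)+1$, and then read off both claimed equalities.

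First, specializing \cref{th:s_max_beta} to $m=2$ gives
\[
  \s_4(d)\;=\;1+\max\bigl\{\beta_{2[1,2]}(d)+2,\ \beta_{2[2,2]}(d)\bigr\}
         \;=\;1+\max\bigl\{\beta_{\{2,4\}}(d)+2,\ \beta_4(d)\bigr\},
\]
and \cref{th:beta_24_2m,th:beta_123_2m} with $m=2$ give $\beta_{\{2,4\}}(d)=\beta_{[1,4]}(d)+1=N(d,5)+1$. Hence, once $\beta_4(d)=N(d,5)+1$ is established, the maximum above is attained by the first term and equals $N(d,5)+3$, so $\s_4(d)=N(d,5)+4=\beta_4(d)+3$, which is exactly the assertion. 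Thus the real task is to prove $\beta_4(d)=N(d,5)+1$.

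For the lower bound $\beta_4(d)\ge N(d,5)+1$, take the columns of a parity-check matrix of an $(N(d,5),\,N(d,5)-d,\,\ge 5)$ code (which exists by the definition of $N(d,5)$) as a set $T\subseteq\Z_2^d$. Distance $\ge 5$ means precisely that these $|T|=N(d,5)$ columns are distinct and nonzero and that no three and no four of them sum to $\0$. Then $T\cup\{\0\}$ has $N(d,5)+1$ elements and no zero-sum $4$-subset: a $4$-subset avoiding $\0$ lies in $T$ and so does not sum to $\0$, while a $4$-subset containing $\0$ consists of $\0$ together with three elements of $T$, which do not sum to $\0$ either. Hence $\beta_4(d)\ge N(d,5)+1$.

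The remaining bound $\beta_4(d)\le N(d,5)+1$ is the step I expect to be the main obstacle, and it is handled by a translation trick. Let $S\subseteq\Z_2^d$ be a set of size $\beta_4(d)$ with no zero-sum $4$-subset, fix any $v\in S$, and put $T=\{\,s+v:s\in S\setminus\{v\}\,\}$. Since $x\mapsto x+v$ is injective on $\Z_2^d$ and $s+v=\0$ only for $s=v$, the set $T$ consists of $|S|-1$ distinct nonzero vectors. Moreover no three and no four of them sum to $\0$: if distinct $a+v,b+v,c+v\in T$ summed to $\0$, then (using $4v=\0$) $a+b+c+v=\0$ with $a,b,c,v$ pairwise distinct elements of $S$, a zero-sum $4$-subset of $S$; and if distinct $a+v,b+v,c+v,e+v\in T$ summed to $\0$, then $a+b+c+e=\0$, again a zero-sum $4$-subset of $S$ — both impossible. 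So, written column-wise, $T$ is a parity-check matrix of a code of length $|T|$, redundancy at most $d$, and distance $\ge 5$, whence $|S|-1=|T|\le N(d,5)$ (using that $N(r,5)$ is nondecreasing in $r$). This gives $\beta_4(d)\le N(d,5)+1$, and together with the lower bound completes the proof. The one thing to watch throughout is the translation: one must check that adding $v$ neither merges two elements of $S$ nor hides a zero-sum relation, and that the single unavoidable coincidence (the element $v$ itself mapping to $\0$) is exactly the one removed.
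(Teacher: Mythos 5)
Your proof is correct, and its skeleton --- specializing \cref{th:s_max_beta} to $m=2$ and combining it with the identity $\beta_4(d)=N(d,5)+1$ --- is exactly the paper's. The only divergence is in how that identity is obtained. You prove it from scratch (a parity-check-matrix construction plus the zero vector for the lower bound, a translation trick for the upper bound), whereas the paper disposes of it in one line by observing that $\beta_4(d)=\beta_{\{2,4\}}(d)$ holds by definition: a \emph{set} of distinct vectors in $\Z_2^d$ can never contain a zero-sum $2$-subset, since $x+y=\0$ forces $x=y$. Given that observation, the value $N(d,5)+1$ follows from Theorems~\ref{th:beta_123_2m} and~\ref{th:beta_24_2m}, which you had already invoked to compute $\beta_{\{2,4\}}(d)$. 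Your two ad hoc arguments are in fact re-proofs of those two theorems in the case $m=2$: the translation $s\mapsto s+v$ is precisely the map $B\mapsto B_y$ in the proof of Theorem~\ref{th:beta_24_2m}, and your lower bound replays the proof of Theorem~\ref{th:beta_123_2m} together with the $\cup\{\0\}$ step of Theorem~\ref{th:beta_24_2m}. So nothing is wrong, and your appeal to monotonicity of $N(r,5)$ (which the paper avoids by noting that a maximal set must contain a basis, so the matrix has rank exactly $d$) is legitimate; but the entire second half of your writeup is redundant once you notice that, for sets, ``no zero-sum $4$-subset'' automatically upgrades to ``no zero-sum $2$- or $4$-subset.''
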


\begin{theorem}\label{th:beta_s_6}
$\;
  \s_6(d)
    \; = \;
  \beta_6(d) + 1
    \;
$
for $d \geq 3$.
\end{theorem}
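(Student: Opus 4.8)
The plan is to take $\s_6(d)\ge\beta_6(d)+1$ for free from the trivial half of \eqref{eq:beta_s_beta}, and to get $\s_6(d)\le\beta_6(d)+1$ from \cref{th:s_max_beta} with $m=3$, which gives
\[
  \s_6(d)\;=\;1+\max\bigl\{\beta_{2[1,3]}(d)+4,\ \beta_{2[2,3]}(d)+2,\ \beta_6(d)\bigr\}.
\]
Since two distinct vectors of $\Z_2^d$ never sum to $\0$, no subset of $\Z_2^d$ has a zero-sum subset of size $2$; hence $\beta_{2[1,3]}(d)=\beta_{2[2,3]}(d)$, the term with coefficient $+2$ is dominated by the term with coefficient $+4$, and the upper bound is equivalent to $\beta_6(d)\ge\beta_{2[1,3]}(d)+4$ for $d\ge3$. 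By \cref{th:beta_24_2m,th:beta_123_2m}, $\beta_{2[1,3]}(d)=\beta_{[1,6]}(d)+1=N(d,7)+1$, so everything reduces to producing, for each $d\ge3$, a subset of $\Z_2^d$ of size $N(d,7)+5$ with no zero-sum $6$-subset.

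The two base cases are elementary. For $d=3$ the whole space $\Z_2^3$ has no zero-sum $6$-subset, since such a subset is the complement of a $2$-subset and the eight vectors sum to $\0$, so its sum equals the sum of the two omitted (distinct) vectors; thus $\beta_6(3)=8=N(3,7)+5$. For $d=4$ one takes an affine hyperplane $H$ not through the origin together with one vector off $H$: the same argument applied inside $H$ excludes zero-sum $6$-subsets contained in $H$, while a $6$-subset meeting the extra vector uses five vectors of $H$ whose last coordinates already sum to $1$; hence $\beta_6(4)\ge9=N(4,7)+5$.

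For $d\ge5$ I would invoke the doubling construction from the proof of \cref{th:beta_d-1}: if $A_0\subseteq\Z_2^{d-1}$ has no zero-sum subset of sizes $2$, $4$, or $6$, then $A=(A_0\times\{0\})\cup(A_0\times\{1\})$ has no zero-sum $6$-subset, because such a subset, after the vectors used with both last coordinates cancel, yields a zero-sum subset of $A_0$ of size $2$, $4$, or $6$. Taking $A_0$ extremal gives $\beta_6(d)\ge2\beta_{2[1,3]}(d-1)=2N(d-1,7)+2$. This is to be paired with the elementary bound $N(d,7)\le2N(d-1,7)+1$: split the columns of an optimal parity-check matrix by a single coordinate; the columns with that coordinate $0$ already form a parity-check matrix of a distance-$\ge7$ code of redundancy $\le d-1$, while the columns with coordinate $1$ become such a matrix after adding a fixed member of this class to all of them and deleting the zero column that results. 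Together these give $\beta_6(d)\ge N(d,7)+1$, leaving a gap of at most $4$.

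The main obstacle is closing that gap of at most $4$. One way is to sharpen the splitting bound to $N(d,7)\le2N(d-1,7)-3$ for $d\ge5$, using that the coordinate-$1$ class additionally contains no zero-sum quadruple and a counting (Parseval) argument showing that an optimal distance-$7$ code cannot, under every coordinate, split almost evenly into two nearly optimal halves; a more flexible alternative is to add a bounded number of extra vectors to the doubled set $A$, or to dispose by hand of the finitely many $d$ for which $2N(d-1,7)+2<N(d,7)+5$ using the tabulated small values of $N(\,\cdot\,,7)$. I note that the same inequality $\beta_6(d)\ge\beta_{2[1,3]}(d)+4$ can also be reached without \cref{th:s_max_beta}, by a direct multiplicity analysis of a sequence of length $\beta_6(d)+1$ (a heavily repeated element produces a zero-sum sextuple via $6x$, $4x+2y$ or $2x+2y+2z$, and the remaining cases force the underlying set to be $\{4,6\}$-free of size exceeding $\beta_{2[1,3]}(d)$); so in every approach this inequality is precisely the point.
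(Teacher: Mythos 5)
Your reduction is the right one and matches the paper's strategy: by \cref{th:s_max_beta} with $m=3$ and the observation that $\beta_{\{4,6\}}(d)=\beta_{\{2,4,6\}}(d)$, everything hinges on the single inequality $\beta_6(d)\ge\beta_{\{2,4,6\}}(d)+4$, which via \cref{th:beta_24_2m,th:beta_123_2m} and \cref{th:beta_d-1} is equivalent to the sharpened splitting bound $N(d,7)\le 2N(d-1,7)-3$ that you name. But you do not prove this inequality; you only list candidate approaches, and that is precisely where all the work lies. The paper closes the gap with \cref{th:code_without_2_and_4}: a computation with the MacWilliams identities for $\lambda=1,2,3,4$ showing that any linear code of length $n\ge 10$ with no words of weight $2$ or $4$ has a \emph{dual} codeword of weight $l$ with $|l-n/2|\ge 2$; splitting the columns of the parity-check matrix along that dual word (not along one of the given coordinates, as in your sketch --- a fixed coordinate may well split the columns evenly) gives $\beta_{\{2,4,6\}}(d)\le 2\beta_{\{2,4,6\}}(d-1)-4$ whenever $\beta_{\{2,4,6\}}(d)\ge 10$ (\cref{th:beta_2_beta}). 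Without some such quantitative input your argument only yields $\beta_6(d)\ge N(d,7)+1$, a shortfall of $4$ that you cannot wave away: the best known upper bound on $N(d,7)$ (Hamming) exceeds twice the best known lower bound (BCH) for infinitely many $d$, so ``finitely many exceptional $d$, checked from tables'' is not available even in principle --- the exceptional set could be infinite for all the stated bounds tell you, and the values of $N(d,7)$ are unknown for large $d$.

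Two smaller points. First, even granting the recursion, it only applies once $\beta_{\{2,4,6\}}(d)\ge 10$, which the bounds in the paper guarantee only for $d\ge 8$; the cases $d=5,6,7$ therefore also need separate treatment (the paper covers all of $3\le d\le 7$ at once via \cref{th:beta_odd}, using the known values of $\s_6(d)$ there and the explicit constructions behind \cref{th:beta_d-1}). Your hand computations for $d=3$ and $d=4$ are correct but do not reach far enough. Second, your closing remark that the inequality $\beta_6(d)\ge\beta_{\{2,4,6\}}(d)+4$ ``is precisely the point'' is accurate --- which is why a proof that stops at identifying it is incomplete.
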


$R_{2m}(n)$ and $\s_{2m}(d)$ are related, because 
$R_{2m}(n)$ is, in fact, the smallest number of rows 
in a binary matrix with $n$ columns 
that does not contain $2m$ columns summing up to a zero vector,
while $\s_{2m}(d)$ is the largest number of columns 
in a matrix with $d$ rows that has the same property.

\begin{theorem}\label{th:R_s}
$\;
  R_{2m}(n) = d 
    \; \iff \;
  \s_{2m}(d-1) \; \leq \; n < \s_{2m}(d) \; .
$
\end{theorem}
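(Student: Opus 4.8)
The plan is to prove Theorem~\ref{th:R_s} directly from the combinatorial reformulations of $R_{2m}(n)$ and $\s_{2m}(d)$ in terms of binary matrices, together with monotonicity in the number of rows. Recall from \cref{sec:EGZG} that a sequence of length $n$ over $\Z_2^d$ corresponds to a $d \times n$ binary matrix, and that the sequence has a zero-sum subsequence of length $2m$ precisely when the matrix has $2m$ columns summing to the zero vector. Hence $\s_{2m}(d)$ is the smallest $s$ such that \emph{every} $d \times s$ binary matrix contains $2m$ columns summing to $\0$; equivalently, $\s_{2m}(d) - 1$ is the largest $n$ for which there \emph{exists} a $d \times n$ binary matrix with no $2m$ columns summing to $\0$. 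On the other hand, $R_{2m}(n)$ is by definition the smallest redundancy of a length-$n$ linear binary code with no word of weight $2m$; since a code of redundancy $d$ is exactly the row space of some $d \times n$ parity-check matrix of rank $d$, and such a code has a word of weight $2m$ iff its parity-check matrix has $2m$ columns summing to $\0$, we get that $R_{2m}(n)$ is the smallest $d$ for which there exists a $d \times n$ binary matrix \emph{of rank $d$} with no $2m$ columns summing to $\0$.

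The first step is to reconcile the rank condition. I would argue that in the definition of $R_{2m}(n)$ the rank-$d$ requirement can be dropped without changing the value: if some $d \times n$ binary matrix $M$ (not necessarily of full rank) has no $2m$ columns summing to $\0$, then after row operations and deleting dependent rows we obtain a matrix $M'$ of rank $d' \leq d$ with the same column space relations, so $M'$ still has no $2m$ columns summing to $\0$, giving a code of redundancy $d' \leq d$. Thus $R_{2m}(n)$ equals the smallest $d$ such that there is \emph{any} $d \times n$ binary matrix with no $2m$ columns summing to $\0$. Combined with the previous paragraph, this yields the clean statement: for each $n$, the set of $d$ admitting a $d \times n$ matrix with no $2m$ columns summing to $\0$ is exactly $\{0,1,\dots,\s_{2m}(n)\text{-related}\}$... more precisely, define $f(d)$ to be the largest $n$ such that a $d \times n$ binary matrix with no $2m$ zero-summing columns exists, so $f(d) = \s_{2m}(d) - 1$. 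The key structural fact I need is that $f$ is nondecreasing in $d$ — appending a zero row never creates new zero-summing sets of columns — so that $\s_{2m}(d)$ is strictly increasing in $d$ (indeed \eqref{eq:recursive} with $k=2$ gives $\s_{2m}(d)+1 \le \s_{2m}(d+1)$).

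Given these facts, the equivalence is essentially a dual-monotonicity statement between the "staircase function" $d \mapsto \s_{2m}(d)$ and its inverse. Concretely: $R_{2m}(n) \le d$ iff there exists a $d \times n$ binary matrix with no $2m$ zero-summing columns iff $n \le f(d) = \s_{2m}(d) - 1$ iff $n < \s_{2m}(d)$. Therefore $R_{2m}(n) = d$ iff $R_{2m}(n) \le d$ and $R_{2m}(n) \not\le d-1$, i.e., iff $n < \s_{2m}(d)$ and $n \ge \s_{2m}(d-1)$, which is exactly the claimed double inequality $\s_{2m}(d-1) \le n < \s_{2m}(d)$. One should also check the degenerate boundary: for $d$ small relative to $m$ (e.g.\ $n < 2m$) no $2m$ columns exist at all, so $R_{2m}(n) = 0$ consistently with $\s_{2m}(0) \le n$; I would note that $\s_{2m}(0)$ is interpreted so that the formula degrades gracefully, or simply restrict to the range where both sides are meaningful.

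The main obstacle I anticipate is purely bookkeeping rather than conceptual: making the correspondence between parity-check matrices and codes airtight when full rank is not assumed, and handling the edge cases where $n$ is so small that no word of weight $2m$ can exist (so the code is unconstrained and $R_{2m}(n) = 0$), as well as the very top of the range. Everything else reduces to the observation already spelled out in \cref{sec:EGZG} plus the trivial monotonicity of $\s_{2m}$ in $d$. I would therefore structure the proof as: (i) restate $R_{2m}(n)$ as a matrix-minimization without the rank hypothesis; (ii) restate $\s_{2m}(d)$ as a matrix-maximization; (iii) observe monotonicity; (iv) chain the equivalences.
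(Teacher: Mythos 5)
Your proposal is correct and is essentially the paper's own argument repackaged: the paper likewise proves $n<\s_{2m}(d)\Rightarrow R_{2m}(n)\le d$ by passing from a $d\times n$ matrix with no $2m$ zero-summing columns to a full-rank row-subset serving as a parity-check matrix, and proves $n\ge\s_{2m}(d-1)\Rightarrow R_{2m}(n)\ge d$ using the monotonicity $\s_{2m}(r)\le\s_{2m}(d-1)$, then combines the two via the strict monotonicity from \eqref{eq:recursive}. Your chain of equivalences $R_{2m}(n)\le d\iff n<\s_{2m}(d)$ is just a cleaner way of organizing those same two implications, so no further comment is needed.
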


\begin{theorem}\label{th:s_m_small}
$\;
  \s_{2m}(d) \; = \; 2m + d 
    \;\;\; \mbox{for} \;\; d < 2m \; .
$
\end{theorem}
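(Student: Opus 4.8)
The plan is to derive the equality from Enomoto's evaluation of $R_{2m}$ together with the dictionary between sequences over $\Z_2^d$ and linear binary codes. The lower bound $\s_{2m}(d)\ge 2m+d$ is already \eqref{eq:s_lower}, so only the upper bound $\s_{2m}(d)\le 2m+d$ needs an argument.

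First I would set up the coding-theoretic reformulation behind \cref{th:R_s}: by the observation closing \cref{sec:EGZG}, a sequence of length $n$ over $\Z_2^d$ has no zero-sum subsequence of length $2m$ if and only if the code spanned row-wise by its vectors has a dual code --- of length $n$ and redundancy at most $d$ --- containing no word of Hamming weight $2m$ (padding a parity-check matrix with zero rows creates no new zero-sum column subsets, which supplies the converse direction). Hence $\s_{2m}(d)\le n$ exactly when $R_{2m}(n)>d$. Together with the fact that $R_{2m}$ is non-decreasing in $n$ (restricting a weight-$2m$-free code to the coordinates on which a fixed coordinate vanishes keeps it weight-$2m$-free while lowering the dimension by at most one), this is precisely the equivalence $R_{2m}(n)=d+1 \iff \s_{2m}(d)\le n<\s_{2m}(d+1)$ recorded in \cref{th:R_s}.

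Next I would invoke \eqref{eq:Enomoto1}. For $n=2m+d$ with $0\le d\le 2m-1$ the hypothesis $2m-1\le n\le 4m-1$ is satisfied, so $R_{2m}(2m+d)=(2m+d)-2m+1=d+1$. Feeding this into \cref{th:R_s} yields $\s_{2m}(d)\le 2m+d<\s_{2m}(d+1)$ at once; combined with \eqref{eq:s_lower} this gives $\s_{2m}(d)=2m+d$ for all $0\le d\le 2m-1$, i.e.\ for $d<2m$. The case $d=0$ is the trivial identity $\s_{2m}(\Z_2^0)=2m$, which also matches $R_{2m}(2m)=1$.

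The argument is short because its substance is imported: the real content is Enomoto's Theorem~1.1 --- proved in \cite{Enomoto:1987} through the binormal-form machinery recorded here in \cref{th:Enomoto1,th:Enomoto2,th:Enomoto} --- together with the bookkeeping of \cref{th:R_s} and \eqref{eq:recursive}. The only points needing care are the endpoints: one must keep $n=2m+d$ inside the interval $[2m-1,4m-1]$ on which $R_{2m}(n)=n-2m+1$, which is exactly the hypothesis $d<2m$ and is why $d=2m$ behaves differently (there the progression breaks, $R_{2m}(4m)=2m$), and one must dispatch $d=0$ by hand. If instead a proof independent of \eqref{eq:Enomoto1} were wanted, the natural attempt would be: reduce to a full-rank $d\times(2m+d)$ matrix over $\F_2$; when $2m+d$ is odd, flip rows so that every row sum vanishes (this preserves which $2m$-subsets of columns sum to $\0$), bring the matrix to binormal form via \cref{th:Enomoto2}, use \cref{th:Enomoto1} to select $d$ columns summing to the total column sum, and take the complementary $2m$ columns, which then sum to $\0$; and reduce the even case to the odd one. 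I expect the parity reduction in that last step to be the main obstacle --- which is exactly what routing through $R_{2m}$ sidesteps.
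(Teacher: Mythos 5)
Your proposal is correct and follows essentially the same route as the paper: the lower bound is \eqref{eq:s_lower}, and the upper bound comes from $R_{2m}(2m+d)=d+1$ via \eqref{eq:Enomoto1} fed into \cref{th:R_s}. The extra re-derivation of the code--sequence dictionary and the sketched direct alternative are not needed, since \cref{th:R_s} is already available.
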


\begin{theorem}\label{th:s_2m_2m}
$\;
  \s_{2m}(2m) \; = \; 4m+1 \; . 
$
\end{theorem}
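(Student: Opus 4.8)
The lower bound is immediate from \eqref{eq:s_lower2}: taking $k=2$, $d=2m$ we have $km = 2m \leq d = (k-1)d$, so $\s_{2m}(2m) \geq 2m + 2m + 1 = 4m+1$. The entire content is the matching upper bound $\s_{2m}(2m) \leq 4m+1$, equivalently: every sequence of length $4m+1$ over $\Z_2^{2m}$ has a zero-sum subsequence of length $2m$. By the dictionary of \Cref{sec:EGZG}, writing the sequence column-wise gives a $2m \times (4m+1)$ binary matrix $M$, and it suffices to show $M$ has $2m$ columns summing to zero. If $M$ has rank $<2m$, I would quotient by the left kernel and induct or apply \Cref{th:s_m_small} in lower dimension (where $\s_{2m}(d) = 2m+d \leq 4m \leq 4m+1$ for $d<2m$, so a short subsequence — which can be padded with repeated zero columns if needed, or handled directly — already works); the interesting case is $\operatorname{rank} M = 2m$.

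The plan is to reduce to \Cref{th:R_s} and the Enomoto-type results of \Cref{sec:forbidden}. Indeed, \Cref{th:R_s} says $R_{2m}(n) = d \iff \s_{2m}(d-1) \leq n < \s_{2m}(d)$. Applying this with $d = 2m$ and the (already established for $d-1 = 2m-1 < 2m$) value $\s_{2m}(2m-1) = 2m + (2m-1) = 4m-1$, the claim $\s_{2m}(2m) = 4m+1$ is equivalent to $R_{2m}(n) = 2m$ exactly for $n \in \{4m-1, 4m\}$. But $R_{2m}(4m-1) = (4m-1) - 2m + 1 = 2m$ is precisely \eqref{eq:Enomoto1}, and $R_{2m}(4m) = 2m$ is precisely \eqref{eq:Enomoto2}. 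So the theorem follows once we also check $R_{2m}(4m+1) > 2m$, i.e. that no linear code of length $4m+1$ and redundancy $2m$ avoids weight $2m$ — equivalently that $\s_{2m}(2m) \leq 4m+1$, which is the content we want. To break this circularity I would instead prove the upper bound directly and feed it back.

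For the direct upper bound, take a $2m \times (4m+1)$ binary matrix $M$ of rank $2m$ and seek $2m$ columns summing to $\0$. The key step is to arrange, via row operations and column permutations (which preserve both the rank and the existence of a zero-sum set of columns of a prescribed size), that the sum of the entries in each row is $0$; this can fail only if some $\{0,1\}$-combination obstruction arises, but since $n = 4m+1$ is odd and $2 \cdot 2m = 4m < 4m+1 = n$, I can invoke \Cref{th:Enomoto2}: after replacing rows so that each row-sum is $0$ (if the all-ones vector lies in the row space we can subtract it into a row; the case where it does not is handled by a separate short argument comparing to a code of length $4m+1$ with redundancy $2m$ containing $(1,1,\dots,1)$ in its dual), $M$ can be brought to binormal form, and then \Cref{th:Enomoto1} / \Cref{th:Enomoto} furnishes exactly $2m$ columns (one from each pair $\{2i-1,2i\}$, $i \le 2m$, leaving the last column unused) summing to the zero vector. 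The genuine obstacle is the bookkeeping in the case $(1,\dots,1) \notin$ row space of $M$, where \Cref{th:Enomoto2} does not directly apply: there one must argue separately, e.g. by passing to the parity-check perspective and using that a length-$(4m+1)$ code of redundancy $2m$ whose dual misses the all-ones word is too "large" to avoid weight $2m$ — this is where a careful weight-counting (MacWilliams, \eqref{eq:MWI}, with $\lambda$ near $m$) or an appeal back to \eqref{eq:Enomoto1}–\eqref{eq:Enomoto2} on a punctured/shortened code of odd length $\le 4m-1$ closes the gap. I expect this parity dichotomy to be the main technical hurdle; everything else is linear algebra over $\F_2$ plus quoting \Cref{sec:forbidden}.
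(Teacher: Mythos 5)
Your lower bound is right, your treatment of the low-rank case is essentially Lemma~\ref{th:rank}, and you correctly diagnose that routing the upper bound through \Cref{th:R_s} together with \eqref{eq:Enomoto1}--\eqref{eq:Enomoto2} is circular. But the full-rank case, which you yourself flag as the main hurdle, is left genuinely open, and the dichotomy you set up there is not the right one. To invoke \Cref{th:Enomoto2} and then \Cref{th:Enomoto} you need every row of the $2m\times(4m+1)$ matrix $M$ to have even weight, i.e.\ the sum of all $4m+1$ columns to be $\0$; this is equivalent to the all-ones word of length $4m+1$ lying in the code $C$ whose parity-check matrix is $M$ (not to its lying in the row space of $M$), and no row operations or column permutations can create this property when it fails, since the row space would still contain odd-weight vectors and hence so would any basis of it. Your proposed escapes for the bad case (MacWilliams with $\lambda$ near $m$, or puncturing to odd length at most $4m-1$) are not carried out, and there is no indication they would close the gap.

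The missing idea is a one-line translation trick, which is the third operation listed in Observation~\ref{th:operations}. Let $x=\sum S$ and replace every element $v$ of $S$ by $v+x$, obtaining $S_0$. Because $|S|=4m+1$ is odd, $\sum S_0=\sum S+(4m+1)x=x+x=\0$, so the matrix of $S_0$ automatically has all row sums equal to $0$; and because $2m$ is even, a subsequence of length $2m$ of $S_0$ is zero-sum if and only if the corresponding subsequence of $S$ is. Now either the matrix of $S_0$ has rank less than $2m$, which is handled by Lemma~\ref{th:rank} together with $\s_{2m}(2m-1)=4m-1$ from \Cref{th:s_m_small}, or it has rank $2m$ and \Cref{th:Enomoto} applies verbatim with $k=2m$ and $n=4m+1$. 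With this substitution your outline becomes the paper's proof; without it, the parity obstruction you identified is real and the argument does not go through.
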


Theorems \ref{th:beta_24_2m} and \ref{th:beta_d-1} 
together with \eqref{eq:even_beta_1} and \eqref{eq:even_beta_2} yield

\begin{corollary}\label{th:s_lower3}
If $m$ is odd, then 
$\beta_{2m}(d) \geq 2d+2\;$ for $d \geq 2m+1$, 
and 
$\beta_{2m}(d) \geq 2d+4\;$ for $d \geq 3m+1$. 
\end{corollary}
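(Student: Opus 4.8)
The plan is to combine the two structural results \cref{th:beta_24_2m} and \cref{th:beta_d-1} with the explicit lower bounds \eqref{eq:even_beta_1} and \eqref{eq:even_beta_2}. Throughout, $m$ is odd, so \cref{th:beta_d-1} applies with its hypothesis satisfied: $\beta_{2m}(d) \geq 2\beta_{2[1,m]}(d-1)$. By \cref{th:beta_24_2m} we may rewrite the right-hand side as $2\beta_{2[1,m]}(d-1) = 2\bigl(\beta_{[1,2m]}(d-1) + 1\bigr) = 2\beta_{[1,2m]}(d-1) + 2$. So the task reduces to producing a good lower bound on $\beta_{[1,2m]}(d-1)$.

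First I would prove the bound $\beta_{2m}(d) \geq 2d+2$ for $d \geq 2m+1$. In that range, $d - 1 \geq 2m$, so \eqref{eq:even_beta_1} gives $\beta_{[1,2m]}(d-1) \geq (d-1) + 1 = d$. Feeding this into the chain above yields $\beta_{2m}(d) \geq 2d + 2$, as claimed. Next, for the stronger bound $\beta_{2m}(d) \geq 2d+4$ when $d \geq 3m+1$: now $d - 1 \geq 3m$, so \eqref{eq:even_beta_2} applies and gives $\beta_{[1,2m]}(d-1) \geq (d-1) + 2 = d+1$; substituting, $\beta_{2m}(d) \geq 2(d+1) + 2 = 2d + 4$. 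Both derivations are short arithmetic once the right inputs are lined up.

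The only point requiring a moment of care is checking that the hypotheses of \cref{th:beta_d-1} and of \eqref{eq:even_beta_1}, \eqref{eq:even_beta_2} are all met simultaneously. \cref{th:beta_d-1} needs $m$ odd — given. The inequality \eqref{eq:even_beta_1} needs $d-1 \geq 2m$, i.e.\ $d \geq 2m+1$; \eqref{eq:even_beta_2} needs $d - 1 \geq 3m$, i.e.\ $d \geq 3m+1$. These match the two regimes in the statement exactly, so there is no gap and nothing delicate. I expect no real obstacle here: the corollary is a bookkeeping consequence of the preceding theorems, and the "hard part" — establishing \cref{th:beta_24_2m} and \cref{th:beta_d-1} themselves — lies in \cref{sec:proofs} and is assumed available.
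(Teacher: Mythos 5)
Your proposal is correct and follows exactly the route the paper intends: the corollary is stated there as a direct consequence of Theorems~\ref{th:beta_24_2m} and~\ref{th:beta_d-1} together with \eqref{eq:even_beta_1} and \eqref{eq:even_beta_2}, and your chain $\beta_{2m}(d) \geq 2\beta_{2[1,m]}(d-1) = 2\beta_{[1,2m]}(d-1)+2$ with the appropriate input in each regime is precisely that derivation. The hypothesis checks and arithmetic are all in order.
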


\begin{theorem}\label{th:s_2m_2m_1_odd}
$\;
  \s_{2m}(2m+1) \; = \; 4m+5 \:  
$
for odd $m$.
\end{theorem}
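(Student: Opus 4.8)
The plan is to establish $\s_{2m}(2m+1)\ge 4m+5$ and $\s_{2m}(2m+1)\le 4m+5$ separately; the first is short and the second carries the weight. For the lower bound: since $m$ is odd and $2m+1\ge 2m+1$, \cref{th:s_lower3} gives $\beta_{2m}(2m+1)\ge 2(2m+1)+2=4m+4$, and substituting $j=m$ into \cref{th:s_max_beta} (equivalently, using the first inequality of \eqref{eq:beta_s_beta}) yields $\s_{2m}(2m+1)\ge 1+\beta_{2m}(2m+1)\ge 4m+5$.

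For the upper bound I would use \cref{th:s_max_beta} to reduce the claim to
\[
  \beta_{2[j,m]}(2m+1)\;\le\;2m+2j+4\qquad(1\le j\le m),
\]
which, combined with the lower bound, forces $\beta_{2m}(2m+1)=4m+4$ and pins the maximum in \cref{th:s_max_beta} at $j=m$. Argue by contradiction: let $A\subseteq\Z_2^{2m+1}$ avoid zero-sum subsets of every size in $\{2j,2j+2,\dots,2m\}$, with $|A|=2m+2j+5$ (always odd). If $A$ spans only an $r$-dimensional subspace with $r\le 2m$, then, since $A$ in particular avoids zero-sum subsets of size $2m$, \cref{th:s_m_small,th:s_2m_2m} with \eqref{eq:beta_s_beta} give $|A|\le\beta_{2m}(r)\le\s_{2m}(r)-1\le 4m$, contradicting $|A|=2m+2j+5$ whenever $j\ge m-2$. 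Thus for $j\ge m-2$ we may write $A$ column-wise as a full-rank $(2m+1)\times n$ matrix $M$, $n=2m+2j+5>4m$, with no $2i$ columns of $M$ summing to zero for any $i\in\{j,\dots,m\}$.

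The core of the argument — and the step I expect to be the main obstacle — is to derive a contradiction here, the critical instance being $j=m$: a full-rank $(2m+1)\times(4m+5)$ matrix in which no $2m$ columns sum to zero. Enomoto's binormal form (\cref{th:Enomoto1,th:Enomoto2,th:Enomoto}) is the natural tool, but it delivers zero-sum column sets whose size equals the number of rows, so one faces a parity mismatch between the $2m+1$ rows of $M$ and the target size $2m$. I would handle this by performing row operations so that $2m$ of the rows of $M$ have even weight (always possible; whether all $2m+1$ can be made even is governed by whether $\mathbf 1\in C^\perp$), passing to the $2m\times n$ submatrix $M'$ of those rows — which has rank $2m$ — and, as $n$ is odd and $n>4m$, using \cref{th:Enomoto2} to bring $M'$ into binormal form. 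Restricting to the first $2i$ binormal pairs and applying \cref{th:Enomoto1} with target $\mathbf 0$ produces $2i$ columns summing to zero in $2i$ coordinates of $M'$; the binormal structure then fixes the other $2m-2i$ coordinates of $M'$ in the sum, and the single omitted coordinate of $M$ is determined by the (unique) selection, so vanishing of this length-$(2m+1-2i)$ residual vector for some $i\in\{j,\dots,m\}$ yields a zero-sum subset of forbidden size. The remaining work is to show this residual can be killed — exploiting the freedom in which $2m$ rows are chosen, in the binormal reduction, and in the columns not lying in binormal pairs (available since $n\ge 4m+5$), with the MacWilliams identities as a plausible auxiliary device for the residual configurations. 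It is exactly the odd parity of $m$, entering on the lower side via \cref{th:s_lower3}, that makes $4m+5$ the truth here rather than the smaller value valid when $m$ is even.

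Finally, for $j\le m-3$ the dimension reduction above is too weak, and I would instead combine \cref{th:beta_123_2m,th:beta_24_2m} — giving $\beta_{2[1,m']}(2m+1)=N(2m+1,2m'+1)+1$ — with a Plotkin-type estimate $N(2m+1,2m+1)=2m+2$ for $m\ge 2$ (for $m=1$ only $j=1$ occurs and the bound is the pigeonhole fact $\beta_2(3)=8$) and a peeling step: letting $j'\le j$ be least with $A$ avoiding $2[j',m]$, either $j'=1$ and $|A|\le N(2m+1,2m+1)+1$, or $A$ has a zero-sum subset $Z$ with $|Z|=2(j'-1)$, so $A\setminus Z$ avoids every even size in $[2,2(m-j'+1)]$ and $|A|\le\beta_{2[1,m-j'+1]}(2m+1)+2(j'-1)=N(2m+1,2(m-j')+3)+2j'-1$. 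Since $N(2m+1,\cdot)=O(m)$ for distances this close to $2m+1$ (Griesmer bound, with the small cases read from the tables in \cref{sec:codes}), every right-hand side is at most $2m+2j+4$, completing the proof.
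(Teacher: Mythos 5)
Your lower bound is exactly the paper's: Corollary~\ref{th:s_lower3} gives $\beta_{2m}(2m+1)\ge 4m+4$ and \eqref{eq:beta_s_beta} adds one. The upper bound, however, has a genuine gap at precisely the point you flag as ``the main obstacle.'' The entire content of the theorem is the claim that a full-rank $(2m+1)\times(4m+5)$ binary matrix with zero row sums has $2m$ columns summing to zero, and your proposal does not prove it: you describe a reduction (drop one row, binormalize the remaining $2m\times n$ matrix, apply Lemma~\ref{th:Enomoto1}) and then say ``the remaining work is to show this residual can be killed,'' offering only ``freedom in the choices'' and MacWilliams identities ``as a plausible auxiliary device.'' That residual is not killable by bookkeeping; the paper needs a real combinatorial argument. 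Its route is to translate the sequence by its total sum so that \emph{all} $2m+1$ rows have even weight, binormalize the full $(2m+1)\times(4m+5)$ matrix via Lemma~\ref{th:Enomoto2}, and then prove Lemma~\ref{th:2m_1_4m_5}: either some row of the auxiliary matrix $C$ has off-diagonal sum $0$ (and Lemma~\ref{th:Enomoto1} finishes), or every row has off-diagonal sum $1$, in which case Lemma~\ref{th:2m_1_2m_1} locates three rows in a special configuration and two ``special'' columns are extracted from the $3\times 9$ block of columns beyond the first $2k-4$ (this is where the ``$+5$'' in $4m+5$ is consumed) to correct the last three coordinates before Lemma~\ref{th:Enomoto1} handles the rest. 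None of that is in your sketch, and the parity of $m$ never enters your upper-bound argument at all, which is a warning sign: the same sketch would ``prove'' $\s_{2m}(2m+1)\le 4m+5$ for even $m$, where the true value is $4m+2$ and a different lemma is needed.

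Two further structural problems with your framing. First, routing the upper bound through Theorem~\ref{th:s_max_beta} obliges you to bound $\beta_{2[j,m]}(2m+1)$ for \emph{every} $j$, whereas the paper works directly with a sequence of length $4m+5$ and never needs this decomposition; your cases $j=m-1$ and $j=m-2$ are extra unproven statements, and for $j=m-2$ the matrix has only $4m+1$ columns, so $2(2m+1)<n$ fails and Lemma~\ref{th:Enomoto2} does not even apply. Second, the small-$j$ peeling argument is plausible in outline (the identity $\beta_{2[1,m']}(d)=N(d,2m'+1)+1$ and the translation trick behind it are sound), but the claimed estimate $N(2m+1,2m-2j'+3)\le 2m+2j-2j'+5$ is asserted via an unverified appeal to Griesmer/Plotkin rather than proved; as the distance drops toward $m$, these codes can have dimension growing with $m$ and the bound is no longer a two-line observation. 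In short: the skeleton is reasonable, but the load-bearing lemma is missing.
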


\begin{theorem}\label{th:beta_odd}
If $m \geq 3$ is odd and $2m-3 \leq d \leq 2m+1$, 
then $\beta_{2m}(d) = \s_{2m}(d) - 1$.
\end{theorem}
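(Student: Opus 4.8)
The plan is to pin down $\beta_{2m}(d)$ for odd $m\ge 3$ and $2m-3\le d\le 2m+1$ by combining the lower bounds already assembled with a matching upper bound via the ``one less than $\s_{2m}$'' principle from \eqref{eq:beta_s_beta}, and then to identify $\s_{2m}(d)$ in the relevant range using \cref{th:s_m_small}, \cref{th:s_2m_2m}, and \cref{th:s_2m_2m_1_odd}. Concretely: for $d<2m$ (i.e. $d\in\{2m-3,2m-2,2m-1\}$), \cref{th:s_m_small} gives $\s_{2m}(d)=2m+d$, so the target reduces to $\beta_{2m}(d)=2m+d-1$. For $d=2m$, \cref{th:s_2m_2m} gives $\s_{2m}(2m)=4m+1$, so we must show $\beta_{2m}(2m)=4m$. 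For $d=2m+1$ with $m$ odd, \cref{th:s_2m_2m_1_odd} gives $\s_{2m}(2m+1)=4m+5$, so we need $\beta_{2m}(2m+1)=4m+4$.

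\emph{Lower bounds.} For the range $d\ge 2m+1$, \cref{th:s_lower3} already yields $\beta_{2m}(d)\ge 2d+2$; taking $d=2m+1$ gives $\beta_{2m}(2m+1)\ge 4m+4$, which matches. For $d\le 2m$ the generic bound $\beta_{2m}(d)\ge 2m+2$ (the construction behind \eqref{eq:beta_s_beta}-type arguments, or \eqref{eq:even_beta_1}) is too weak, so here I would instead invoke the lower half of \eqref{eq:beta_s_beta}, namely $\beta_{2m}(d)\ge \s_{2m}(d)-\bigl(2m-1\bigr)$ is the \emph{wrong} direction — rather, the correct move is to exhibit, for each such $d$, an explicit set in $\Z_2^d$ of the claimed size with no zero-sum subset of size $2m$. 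For $d=2m-j$ with $j\in\{1,2,3\}$ one wants a set of size $4m-j-1$; a natural candidate is the columns of a parity-check-type matrix: take the $d$ standard basis vectors together with a carefully chosen collection of weight-$2$ (or low-weight) vectors so that no $2m$ of the selected vectors sum to zero, exploiting that $2m$ is large relative to $d$. Since \cref{th:R_s} equates $\beta_{2m}$-style quantities with redundancy thresholds for codes avoiding weight $2m$, I would cross-check these constructions against \eqref{eq:Enomoto1}–\eqref{eq:Enomoto2}: indeed $R_{2m}(n)=n-2m+1$ for $2m-1\le n\le 4m-1$ translates, via \cref{th:R_s}, precisely into $\s_{2m}(d)=2m+d$ and hence into the value of $\beta_{2m}(d)$ for $d$ just below $2m$, and $R_{2m}(4m)=2m$ gives $\s_{2m}(2m)=4m+1$; this is the cleanest route to the lower bounds for $d\le 2m$.

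\emph{Upper bounds.} This is where the work lies. One shows that any set $X\subseteq\Z_2^d$ of size one more than the claimed $\beta_{2m}(d)$ must contain a zero-sum subset of size exactly $2m$. By \cref{th:s_max_beta}, $\s_{2m}(d)=1+\max_{1\le j\le m}\{\beta_{2[j,m]}(d)+(2m-2j)\}$, so proving $\beta_{2m}(d)=\s_{2m}(d)-1$ is equivalent to showing that the maximum in that formula is attained at $j=m$, i.e. that $\beta_{2m}(d)\ge \beta_{2[j,m]}(d)+(2m-2j)$ for every $j<m$. Thus the heart of the argument is a comparison of the single-forbidden-weight quantity $\beta_{2m}(d)$ with the interval-forbidden quantities $\beta_{2[j,m]}(d)$; one needs that forbidding the whole even interval $[2j,2m]$ costs at least $2m-2j$ in size compared to forbidding only $2m$. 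For $d\le 2m$ one can likely control $\beta_{2[j,m]}(d)$ directly because $d$ is so small — e.g. by a parity/rank argument or by invoking \cref{th:beta_24_2m} and \cref{th:beta_123_2m} together with the explicit small-$d$ formula \cref{th:beta_d_small} when $2j$ gets close to $2^d$. For $d=2m+1$ and odd $m$ the argument presumably goes through \cref{th:beta_d-1} (which is stated for odd $m$ and relates $\beta_{2m}(d)$ to $2\beta_{2[1,m]}(d-1)$) to force the $j=m$ term to dominate. \textbf{The main obstacle} is precisely this case-by-case verification that the $j=m$ term wins in \cref{th:s_max_beta}: it requires good upper estimates on $\beta_{2[j,m]}(d)$ for intermediate $j$, and the odd-$m$, $d=2m+1$ case is delicate because that is exactly where the parity obstruction behind \cref{th:s_2m_2m_1_odd} produces the jump from $4m+2$ to $4m+5$, so the bookkeeping must be done carefully rather than by a uniform inequality.
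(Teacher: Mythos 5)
Your framing of the five cases and of the target values of $\beta_{2m}(d)$ is correct, and your treatment of $d=2m+1$ (lower bound from \cref{th:s_lower3}, value of $\s_{2m}(2m+1)$ from \cref{th:s_2m_2m_1_odd}) matches the paper. But the proposal locates the difficulty in the wrong place, and the part where the difficulty actually lies is left unproved. The inequality $\beta_{2m}(d)\le \s_{2m}(d)-1$ --- what you call the upper bound and describe as ``where the work lies'' --- is immediate from the left half of \eqref{eq:beta_s_beta}: a set of size $\s_{2m}(d)$ is in particular a sequence of that length, so it contains a zero-sum subsequence, hence subset, of size $2m$. No detour through \cref{th:s_max_beta} is needed; moreover your reformulation (``show the maximum is attained at $j=m$'') does not escape the real issue, because for $d<2m$ the $j=1$ term $\beta_{2[1,m]}(d)+(2m-2)$ already equals $2m+d-1=\s_{2m}(d)-1$, so showing that the $j=m$ term also attains the maximum is exactly the statement $\beta_{2m}(d)\ge 2m+d-1$, i.e.\ the lower bound you deferred.

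That lower bound, for $d\in\{2m-3,2m-2,2m-1,2m\}$, is the actual content of the theorem, and the proposal does not establish it. The appeal to \eqref{eq:Enomoto1}--\eqref{eq:Enomoto2} via \cref{th:R_s} is circular: those results determine $R_{2m}(n)$ and hence $\s_{2m}(d)$, which concerns sequences with repetition and therefore yields only the trivial direction $\beta_{2m}(d)\le\s_{2m}(d)-1$; it says nothing about how large a \emph{set} without a zero-sum $2m$-subset can be. The sketched construction (basis vectors plus low-weight vectors) is not carried out and its size is not verified. The paper obtains all four remaining cases from \cref{th:beta_d-1}: the doubling construction gives $\beta_{2m}(d)\ge 2\beta_{2[1,m]}(d-1)=2\beta_{[1,2m]}(d-1)+2$ (via \cref{th:beta_24_2m}), which together with $\beta_{[1,2m]}(d-1)\ge d-1$ and \eqref{eq:even_beta_1} settles $d=2m-1$, $d=2m$, and (again) $d=2m+1$; the second assertion of \cref{th:beta_d-1} with $b=2m+2$, applied after replacing $m$ by $m-2$ (which is where the hypothesis $m\ge 3$ enters), handles $d=2m-3$; and $d=2m-2$ follows by appending a coordinate. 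Without constructions of this kind the proof is incomplete.
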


In line with Corollary \ref{th:s_lower3} 
and Theorem \ref{th:s_2m_2m_1_odd}, we propose

\begin{conjecture}\label{conj:odd}
If $m$ is odd, 
$\;\s_{2m}(d) = 2d+3\:$ for $\:2m+1 \leq d \leq 3m$.
\end{conjecture}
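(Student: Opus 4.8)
\emph{Proof strategy.} The lower bound is already in hand: for odd $m$, \cref{th:s_lower3} gives $\beta_{2m}(d)\ge 2d+2$ whenever $d\ge 2m+1$, and the left inequality of \eqref{eq:beta_s_beta} raises this to $\s_{2m}(d)\ge 2d+3$. So the whole content of the conjecture is the matching bound $\s_{2m}(d)\le 2d+3$ for $2m+1\le d\le 3m$, which by \cref{th:s_max_beta} is equivalent to the family of estimates
\[
  \beta_{2[j,m]}(d)\ \le\ 2d+2-2(m-j)\qquad(1\le j\le m).
\]

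First I would dispose of the values of $j$ far from $m$. For these the forbidden block $\{2j,2j+2,\dots,2m\}$ is short compared with the dimension and $\beta_{2[j,m]}(d)$ lies well below the target; when $j=1$, \cref{th:beta_24_2m,th:beta_123_2m} turn the required bound into $N(d,2m+1)\le 2d-2m+3$, which follows from the Singleton and Plotkin bounds in the range $d\le 3m$ (and is consistent with the tabulated values of $N(r,5)$), and similar code-theoretic estimates handle the remaining small $j$. The critical cases are those with $j$ close to $m$; the representative one is $j=m$, namely $\beta_{2m}(d)\le 2d+2$. A natural plan is induction on $d$, anchored at $d=2m+1$, where \cref{th:beta_odd} and \cref{th:s_2m_2m_1_odd} already give $\beta_{2m}(2m+1)=4m+4=2(2m+1)+2$ (the case $m=1$, where the range collapses to $d=3$, is elementary). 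For the step $d\to d+1$ one takes $X\subseteq\Z_2^{d+1}$ with $|X|=2d+5$, splits $X$ along an affine hyperplane $H$ (zero-sums of even size are translation-invariant, so each part descends to $\Z_2^d$ after a translation and a linear isomorphism), and tries to assemble a zero-sum of length exactly $2m$ from short zero-sums supplied, via the induction hypothesis and its $\beta_{2[j,m]}$-analogues, by the two parts; the binormal-form machinery of \cref{th:Enomoto,th:Enomoto1} is the natural device for matching parities. The lower-bound constructions \eqref{eq:even_beta_1}, \eqref{eq:even_beta_2} and the doubling of \cref{th:beta_d-1} show these bounds are tight precisely up to $d=3m$ — past which \cref{th:s_lower3} forces $\beta_{2m}(d)\ge 2d+4$, so the induction is expected to stop closing there. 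For the intermediate $j$ I see two plausible routes: run the same hyperplane/projection induction while carrying the whole block $\{2j,\dots,2m\}$, exploiting the added rigidity, or prove a sharpened monotonicity estimate $\beta_{2[j-1,m]}(d)\le\beta_{2[j,m]}(d)-2$ in this regime (the trivial bound is only $\le\beta_{2[j,m]}(d)$), from which the $j=m$ case propagates to all $j$.

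The main obstacle — and the reason the statement is only conjectural — is that the hyperplane split is too lossy by itself: since $\beta_{2m}(d)$ is as large as $2d+2$, the larger of the two parts (of size merely $\ge d+3$) need not contain a $2m$-zero-sum on its own, so one is forced to combine contributions from both sides while controlling the total length modulo $2m$, not merely modulo $2$. Equivalently, proving $\beta_{2m}(d)\le 2d+2$ for $2m<d\le 3m$ means forcing a zero-sum of a \emph{prescribed small} even length $2m$ in a set that is large compared with $2m$ but not with $d$, whereas the available tools — the Hamming and MacWilliams identities of \cref{sec:codes}, the binormal-form lemmas of \cref{sec:forbidden} (\cref{th:Enomoto} yields only a zero-sum of length equal to the number of rows) — naturally produce zero-sums of length of order $d$. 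Closing this gap seems to require a structural description of the extremal sets — conjecturally, two bases plus a bounded number of extra vectors — or a refinement of the one-way code-to-sequence correspondence of \cref{sec:EGZG} that tracks lengths modulo $2m$ rather than merely their parity.
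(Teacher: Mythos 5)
This statement is posed in the paper as a \emph{conjecture}, not a theorem: the paper supplies no proof of the upper bound $\s_{2m}(d)\le 2d+3$ for $2m+1\le d\le 3m$ and only reports a computer verification for $m=3$. Your write-up correctly treats it as such. The ingredients you do establish are sound and coincide with what the paper already knows: the lower bound $\s_{2m}(d)\ge 2d+3$ follows exactly as you say from \cref{th:s_lower3} together with the left inequality of \eqref{eq:beta_s_beta}; the reduction of the upper bound, via \cref{th:s_max_beta}, to the family $\beta_{2[j,m]}(d)\le 2d+2-2(m-j)$ is the right reformulation; the case $j=1$ does reduce through \cref{th:beta_24_2m} and \cref{th:beta_123_2m} to a length bound on codes of distance $2m+1$ (though the estimate $N(d,2m+1)\le 2d-2m+3$ in this range is most naturally extracted from the Griesmer or Plotkin bounds rather than Singleton, which only constrains the redundancy, not the length at fixed redundancy); and the anchor $\beta_{2m}(2m+1)=4m+4$ is precisely \cref{th:beta_odd} combined with \cref{th:s_2m_2m_1_odd}.

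The genuine gap is the one you name yourself: the inductive step $d\to d+1$ for $\beta_{2m}(d)\le 2d+2$, and its $\beta_{2[j,m]}$ analogues for intermediate $j$, is never carried out, and none of the paper's tools closes it. The hyperplane split is indeed too lossy --- each part has size only on the order of $d$, below any threshold at which \cref{th:Enomoto} or the binormal-form lemmas can force a zero-sum of the \emph{prescribed} length $2m$ rather than one of length comparable to the ambient dimension --- and the alternative route via the sharpened monotonicity $\beta_{2[j-1,m]}(d)\le\beta_{2[j,m]}(d)-2$ is itself unproved (only the trivial inequality without the $-2$ is available). So what you have is a plausible plan plus an accurate diagnosis of why it does not terminate, not a proof. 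Since the target is an open conjecture, that is the honest outcome, but it should not be mistaken for a resolution of the statement.
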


When $d > 2m$, the cases of even and odd $m$ differ significantly.
For even $m$ and $2m < d < 3m$, 
the best lower bound we know  
follows from \eqref{eq:recursive} and Theorem~\ref{th:s_2m_2m}:
$\:
  \s_{2m}(d) \geq \s_{2m}(2m) + (d-2m) = d + 2m + 1 .
$

\begin{theorem}\label{th:s_2m_2m_1_even}
$\;
  s_{2m}(2m+1) \; = \; 4m+2 \;  
$
for even $m$.
\end{theorem}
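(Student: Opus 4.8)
The lower bound $\s_{2m}(2m+1)\ge 4m+2$ is immediate from \eqref{eq:recursive} and Theorem~\ref{th:s_2m_2m}: $\s_{2m}(2m+1)\ge\s_{2m}(2m)+1=(4m+1)+1$. For the upper bound the plan is to apply Theorem~\ref{th:s_max_beta}: it suffices to prove
\[
  \beta_{2[j,m]}(2m+1)\;\le\;2m+2j+1\qquad\text{for every }1\le j\le m,
\]
since then $\max_{1\le j\le m}\{\beta_{2[j,m]}(2m+1)+(2m-2j)\}\le 4m+1$ and hence $\s_{2m}(2m+1)\le 4m+2$. For $j=1$ this is an equality coming from coding theory: by Theorems~\ref{th:beta_24_2m} and \ref{th:beta_123_2m}, $\beta_{2[1,m]}(2m+1)=\beta_{[1,2m]}(2m+1)+1=N(2m+1,2m+1)+1$, and $N(2m+1,2m+1)=2m+2$ because the length-$(2m+2)$ repetition code has redundancy $2m+1$ and distance $2m+2$, while a binary $(n,\,n-2m-1,\,\ge 2m+1)$ code of dimension $k\ge 2$ would, by the Griesmer bound, satisfy $n\ge(2m+1)+\lceil(2m+1)/2\rceil+(k-2)=3m+k=n+(m-1)$, impossible for even $m$ (so $m\ge 2$). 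Thus $\beta_{2[1,m]}(2m+1)=2m+3$, the $j=1$ term of Theorem~\ref{th:s_max_beta} equals $4m+1$ (reproving the lower bound), and it remains to handle $2\le j\le m$. When $m=2$ one may instead quote Theorem~\ref{th:beta_s_4}, so assume $m\ge 4$.

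For $2\le j\le m$ I would pass to the language of codes via cycle spaces. If $A\subseteq\Z_2^{2m+1}$ has no zero-sum subset of even size in $[2j,2m]$, then (discarding $\0$ from $A$, which is harmless) its cycle space $\mathcal C_A=\{T\subseteq A:\sum_{a\in T}a=\0\}$, viewed inside $\F_2^{|A|}$ via indicator vectors, is a binary linear code of length $|A|$, redundancy $\operatorname{rank}(A)\le 2m+1$, minimum distance at least $3$, and with no word of weight in $\{2j,2j+2,\dots,2m\}$. Conversely every such code arises this way (columns of a parity-check matrix), so $\beta_{2[j,m]}(2m+1)\le 2m+2j+1$ is equivalent to: no binary code of redundancy $\le 2m+1$ and minimum distance $\ge 3$ avoiding the weights $\{2j,\dots,2m\}$ has length $\ge 2m+2j+2$; by shortening (which does not increase redundancy, does not decrease minimum distance, and introduces no new weights) it suffices to treat length $2m+2j+2$ and dimension $2j+1$. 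For $j$ well below $m$ the forbidden window is wide and I would peel it from the bottom: the symmetric difference of two codewords of weight $w<2j$ has even weight $\le 2w$, must avoid the window, and this forces such words to overlap heavily in pairs, so that by linearity they all live on a bounded set of coordinates; deleting those coordinates shrinks the window to $\{2j-2,\dots,2m\}$ and the redundancy, and one descends to the base case $j=1$ above.

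The genuine obstacle is $j=m$, that is $\beta_{2m}(2m+1)\le 4m+1$, equivalently $R_{2m}(4m+2)\ge 2m+2$: every $(2m+1)\times(4m+2)$ binary matrix has $2m$ columns summing to $\0$. Here the symmetric-difference window degenerates to $\{2m\}$ and the peeling has no content. I would argue on a putative counterexample $M$ directly. If some $4m+1$ of its columns fail to span $\Z_2^{2m+1}$ then, restricting to that subspace, a zero-sum subset of size $2m$ exists by Theorem~\ref{th:s_2m_2m} (or Theorem~\ref{th:s_m_small}); so we may assume $M$ has rank $2m+1$ and every $4m+1$ of its columns span. Then the code $C$ with generator matrix $M$ is a $[4m+2,2m+1]$ code whose dual $C^\perp$ has no word of weight $2m$; since $\dim C^\perp=2m+1=\tfrac12(4m+2)$, the Plotkin bound makes a minimum distance $\ge 2m+1$ for $C^\perp$ impossible (it would bound $2^{2m+1}$ by a linear function of the length), so $C^\perp$ has a word of weight at most $2m-1$. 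From such a low-weight word I would pass to a shortened or residual code of the same flavor and iterate, using the exact values $R_{2m}(n)=n-2m+1$ for $2m-1\le n\le 4m-1$ and $R_{2m}(4m)=2m$ from \eqref{eq:Enomoto1}--\eqref{eq:Enomoto2}, the MacWilliams identities \eqref{eq:MWI}, and the binormal-form results (Lemma~\ref{th:Enomoto1}, Lemma~\ref{th:Enomoto2}, Corollary~\ref{th:Enomoto}) applied to a suitable odd-length submatrix with zero row sums, in the spirit of Enomoto's treatment of $R_{2m}(4m)$. Arranging the parity conditions so that those lemmas apply, and controlling the iteration, is the step I expect to be hardest.
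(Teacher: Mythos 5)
Your lower bound is the paper's, and your reduction of the upper bound via Theorem~\ref{th:s_max_beta} to the inequalities $\beta_{2[j,m]}(2m+1)\le 2m+2j+1$ ($1\le j\le m$) is sound, as is the $j=1$ case ($N(2m+1,2m+1)=2m+2$ via Griesmer/repetition code, matching the paper's own remark after Conjecture~\ref{conj:even}). But the proof stops exactly where the theorem begins. For $2\le j\le m-1$ the ``peeling'' argument is not a proof: puncturing the coordinates supporting low-weight codewords can create new words of forbidden weight, the claim that all words of weight $<2j$ ``live on a bounded set of coordinates'' is unsubstantiated, and nothing controls how the redundancy and the window co-evolve so as to ``descend to $j=1$.'' Note also that the monotonicity $\beta_{2[j,m]}\le\beta_{2[j+1,m]}$ runs the wrong way to let the small-$j$ cases be inherited from $j=m$, so each $j$ genuinely needs its own argument in your scheme. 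For $j=m$ — the statement $\beta_{2m}(2m+1)\le 4m+1$, which carries essentially the full strength of the theorem — you correctly extract a dual word of weight $\le 2m-1$ from the Plotkin bound, but the subsequent ``shorten and iterate'' step is explicitly left unresolved; that is the entire content of the result, not a technicality to be arranged.

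For comparison: the paper does not decompose over $j$ at all. It normalizes the $(2m+1)\times(4m+2)$ matrix by appending one extra column so that the column sum vanishes (producing two identical columns), brings the resulting $(2m+1)\times(4m+3)$ matrix to binormal form via Lemma~\ref{th:Enomoto2}, and then proves two dedicated lemmas (Lemmas~\ref{th:2m_1_4m_2} and~\ref{th:2m_1_4m_3}) showing such a matrix has $2m$ columns summing to zero while avoiding the appended column. The combinatorial engine behind these is Lemma~\ref{th:digraph}, a new statement about digraphs on $n\equiv 1\pmod 4$ vertices with all out-degrees odd (existence of a triple spanning two odd-out-degree vertices or a quintuple spanning three), proved by a switching argument. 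None of that content — or any substitute for it — appears in your proposal, so the argument has a genuine gap at its core.
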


\begin{conjecture}\label{conj:even}
If $m$ is even, 
$\:\s_{2m}(d) = d + 2m + 1\:$ for $2m+1 \leq d \leq 3m-1$.
\end{conjecture}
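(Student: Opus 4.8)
Because the lower bound $\s_{2m}(d)\ge d+2m+1$ is already in hand (it follows from \eqref{eq:recursive} and \cref{th:s_2m_2m}), the entire content of the statement is the matching upper bound $\s_{2m}(d)\le d+2m+1$ for $2m+1\le d\le 3m-1$. I would route this through \cref{th:s_max_beta}: since $\s_{2m}(d)=1+\max_{1\le j\le m}\{\beta_{2[j,m]}(d)+(2m-2j)\}$, the upper bound is equivalent to the family
\begin{equation*}
  \beta_{2[j,m]}(d)\;\le\;d+2j\qquad(1\le j\le m).
\end{equation*}
For $j=1$ this holds with equality: \cref{th:beta_24_2m,th:beta_123_2m} give $\beta_{2[1,m]}(d)=N(d,2m+1)+1$, and the Griesmer bound forces $N(d,2m+1)=d+1$ for $2m\le d\le 3m-1$ (a code of distance $\ge 2m+1$ and dimension $\ge 2$ has redundancy $\ge 3m$, so only dimension $\le 1$ survives, i.e.\ the repetition code). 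Thus the $j=1$ term already equals $d+2m$, matching the lower bound; the task is to show that no larger $j$ overshoots it.

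The decisive instance is $j=m$, i.e.\ $\beta_{2m}(d)\le d+2m$, and this is exactly where evenness of $m$ must be used: \cref{th:s_lower3} gives $\beta_{2m}(d)\ge 2d+2>d+2m$ for odd $m$, so the bound is genuinely false there (consistent with \cref{conj:odd}). It is cleanest to package all the $j$ at once through \cref{th:R_s}: the whole upper bound is equivalent to the coding statement
\begin{equation*}
  R_{2m}(n)\;=\;n-2m\qquad(4m\le n\le 5m,\ m\ \text{even}),
\end{equation*}
whose left endpoint is Enomoto's \eqref{eq:Enomoto2}. Equivalently, I would isolate and prove the core lemma: \emph{for even $m$, every binary linear code of length $n\le 5m$ and dimension at least $2m+1$ contains a word of Hamming weight exactly $2m$.}

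To prove the core lemma I would push the binormal-form machinery of \cref{th:Enomoto1,th:Enomoto2,th:Enomoto}. The useful feature of binormal form is that the two columns of the $j$-th pair differ exactly in row $j$, so their sum is the basis vector $e_j$; starting from the unique one-column-per-pair selection of \cref{th:Enomoto1} that sums to $\0$ (of size equal to the number of rows), one can toggle pairs between ``take one'', ``take both'', and ``take neither'' to change the size of the zero-sum set while correcting the sum back to $\0$ using the $e_j$. The plan is to arrange the row sums to be even and then steer this process to a zero-sum subset of size exactly $2m$ rather than the generic size; since $m$ is even, $2m\equiv 0\pmod 4$, and it is this congruence that should make the toggling parities close up, ruling out the odd-$m$ behaviour of \cref{th:s_lower3}. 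For the lowest lengths in the range the MacWilliams identities \eqref{eq:MWI} give an independent check: imposing $A_{2m}=0$ together with $A_j,B_j\ge 0$ should yield a Delsarte-type linear-programming contradiction.

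The main obstacle is the window $4m<n\le 5m$. Binormal form applies to a $k\times n$ matrix only when $2k<n$, and for a parity-check matrix of a dimension-$(2m+1)$ code this forces $n<4m+2$, precisely one step short of the range we need; this is why Enomoto's results stop at $n=4m$ and why \cref{th:Enomoto} carries the hypothesis that $n$ be odd. Covering $4m<n\le 5m$ therefore requires either iterating the column-pairing across successive rank-one reductions or replacing it with a weight-enumerator argument, and keeping the target weight pinned at $2m$ throughout is delicate. That this regime is hard is mirrored in the distance-$5$ problem of \cref{sec:codes}, where ``not a single $(2^m+a,\,5)$ code is known'' for $a>1$: the extremal behaviour of codes just past the Enomoto/BCH threshold is exactly what is not yet understood, which is why the statement is offered as a conjecture.
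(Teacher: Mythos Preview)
The statement is \cref{conj:even}, a conjecture; the paper does not prove it. The paper's only contribution toward it is the remark immediately following the conjecture: it would suffice to show $\s_{4k}(6k-1)\le 10k$, equivalently that every binary linear code of length $10k$ and dimension $4k+1$ contains a word of weight $4k$. Your ``core lemma'' is this same statement, phrased for all $n\le 5m$ rather than only the extremal value $n=5m$; the two versions are equivalent because \eqref{eq:recursive} propagates the upper bound downward from $d=3m-1$.

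Your reductions are all sound: the lower bound, the passage through \cref{th:s_max_beta}, the Griesmer computation $\beta_{2[1,m]}(d)=N(d,2m+1)+1=d+2$ for $2m\le d\le 3m-1$, and the translation via \cref{th:R_s} to $R_{2m}(n)=n-2m$ are correct. (At the top endpoint $n=5m$ the direction $R_{2m}(5m)\le 3m$ requires $\s_{2m}(3m)>5m$, which follows from \eqref{eq:even_beta_2} together with \cref{th:beta_24_2m,th:s_max_beta}.)

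What is missing is an actual proof of the core lemma, as you yourself concede. Your diagnosis of the binormal-form obstacle is right in spirit but understates the gap: the paper already pushes past the naive threshold $2k<n$ to handle $d=2m+1$ via Lemmas~\ref{th:2m_1_4m_2}, \ref{th:2m_1_4m_3} and the digraph Lemma~\ref{th:digraph}; what remains open is $2m+2\le d\le 3m-1$, where the row count exceeds half the column count by as much as $m-1$, not ``one step''. Neither the toggling heuristic nor the MacWilliams/LP suggestion is developed into an argument, so your proposal is a correct restatement of why the problem is open---which is exactly the paper's own position.
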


To prove Conjecture \ref{conj:even}, it would be sufficient to show that 
$\s_{4k}(6k-1) \leq 10k$. 
This is equivalent to the statement that 
every $(4k+1)$-dimensional code of length $10k$ has a word of weight $4k$.

Computer search confirmed that 
Conjectures~\ref{conj:odd} and~\ref{conj:even} 
hold for $m=3$ and $m=4$, respectively.

Conjecture 4.4 of \cite{Gao:2014} in the case $G=\Z_2^d$ claims 
$\s_2(d) - 2 > \s_4(d) - 4 > \cdots > \s_{2m}(d) - 2m$, 
where $m = \lfloor d/2 \rfloor$. 
Theorems~\ref{th:beta_s_4} and~\ref{th:s_2m_2m_1_odd} 
provide a counterexample: 
$\s_4(7) - 4 = \s_6(7) - 6 = 11$. 
Computer search showed that $\s_8(11)=20$, 
and by Theorem~\ref{th:s_2m_2m_1_odd}, $\s_{10}(11)=25$,
which gives another example: 
$\s_8(11) - 8 = 12$, $\s_{10}(11) - 10 = 15$.

Conjecture 4.6 of \cite{Gao:2014} claims 
$\s_{2m}(d) = \beta_{[1,2m]}(d) + 2m$ for every $m$. 
(In notation of \cite{Gao:2014}, 
$\eta_{2m}(\Z_2^d) = \beta_{[1,2m]}(d) + 1$.) 
The equality holds for $m=2$ (see our Theorem~\ref{th:beta_s_4}), 
and is likely to hold for even $m$ in general. 
Our Theorem~\ref{th:s_2m_2m_1_odd} disproves this conjecture 
for odd $m \geq 3$ and $d=2m+1$. 
Indeed, it is easy to see that 
a two-dimensional binary code of length $n \geq 7$ 
and distance $n-2$ or higher can not exist. 
Therefore, $\beta_{[1,2m]}(2m+1) = N(2m+1,2m+1) < 2m+3$. 
By applying \eqref{eq:even_beta_1}, 
we get $\beta_{[1,2m]}(2m+1) = 2m+2$ 
while $s_{2m}(2m+1) = 4m+5$.

\begin{observation}\label{th:2m_2m-2}
If $\s_{2m}(d) - \s_{2m-2}(d) \geq 3$ 
then $\beta_{2m}(d) = \s_{2m}(d)-1$. 
Indeed, consider a sequence $S$ of length $\s_{2m}(d)-1$ over $\Z_2^d$ 
that does not contain a zero-sum subsequence of length $2m$. 
If $\beta_{2m}(d) < \s_{2m}(d)-1$, 
there is $z\in\Z_2^d$ which appears in $S$ at least twice. 
Remove two copies of $z$ to obtain a sequence $S'$ of length 
$\s_{2m}(d)-3$. 
As $\s_{2m}(d)-3 \geq \s_{2m-2}(d)$, 
$S'$ must contain a zero-sum subsequence of length $2m-2$. 
By adding back two copies of $z$ we get 
a zero-sum subsequence of length $2m$ in $S$.
\end{observation}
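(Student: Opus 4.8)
The plan is to prove the two inequalities $\beta_{2m}(d)\le\s_{2m}(d)-1$ and $\beta_{2m}(d)\ge\s_{2m}(d)-1$. The first is already recorded in \eqref{eq:beta_s_beta}, so the real content is the lower bound on $\beta_{2m}(d)$, and for that the idea is to show that some extremal sequence witnessing the value of $\s_{2m}(d)$ can be taken repetition-free, so that its underlying \emph{set} does the job.

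Concretely, I would begin from the definition of $\s_{2m}(d)$: there exists a sequence $S$ over $\Z_2^d$ of length $\s_{2m}(d)-1$ with no zero-sum subsequence of length $2m$. The claim is that $S$ has no repeated entry; granting this, the underlying set of $S$ has size $\s_{2m}(d)-1$ and no zero-sum subset of size $2m$, giving $\beta_{2m}(d)\ge\s_{2m}(d)-1$. To prove the claim, suppose instead that some $z\in\Z_2^d$ occurs at least twice in $S$, and delete two of its copies to obtain a sequence $S'$ of length $\s_{2m}(d)-3$. The hypothesis $\s_{2m}(d)-\s_{2m-2}(d)\ge 3$ is exactly the inequality $\s_{2m}(d)-3\ge\s_{2m-2}(d)$, so by the definition of $\s_{2m-2}(d)$ the sequence $S'$ contains a zero-sum subsequence $T$ of length $2m-2$. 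Since $T$ is a subsequence of $S'$ it is disjoint from the two deleted copies of $z$, so adjoining them to $T$ produces a length-$2m$ subsequence of $S$ whose sum equals $\operatorname{sum}(T)+2z=\0$, where we use $2z=\0$ in $\Z_2^d$. This contradicts the choice of $S$, so $S$ must indeed be repetition-free.

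I do not expect a genuine obstacle: this is a short manipulation of the definitions, with the only delicate points being bookkeeping — that the two deleted copies of $z$ automatically lie outside the zero-sum subsequence later found inside $S'$, and that the numerical hypothesis is used in the sharp form $\s_{2m}(d)-3\ge\s_{2m-2}(d)$. The one genuinely load-bearing fact is the characteristic-two identity $2z=\0$, which is what lets the ``delete two copies / add two copies'' operation trade a length-$(2m-2)$ zero-sum subsequence for a length-$2m$ one.
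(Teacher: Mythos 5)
Your argument is correct and is essentially identical to the paper's: both prove the nontrivial inequality $\beta_{2m}(d)\ge\s_{2m}(d)-1$ by taking an extremal sequence of length $\s_{2m}(d)-1$, deleting two copies of a repeated element, invoking $\s_{2m}(d)-3\ge\s_{2m-2}(d)$ to find a zero-sum subsequence of length $2m-2$, and re-inserting the two copies (using $2z=\0$) to reach a contradiction. The only difference is presentational — you phrase it as showing the extremal sequence is repetition-free, while the paper phrases it as a direct contradiction with $\beta_{2m}(d)<\s_{2m}(d)-1$ — and your explicit remarks on the disjointness bookkeeping and the role of characteristic two are accurate.
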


In light of Conjectures \ref{th:s_2m_2m_1_odd}, \ref{th:s_2m_2m_1_odd}, 
and Observation~\ref{th:2m_2m-2}, 
we expect $\beta_{2m}(d) = \s_{2m}(d)-1$ to hold for all odd $m$ 
and $2m-3 \leq d \leq 3m$.

\vspace{2mm}

It follows from Theorem \ref{th:s_2m_2m_1_even} 
and \eqref{eq:recursive} that 
$\s_{2m}(2m+2) \geq 4m+3$ for even $m$. 
By Corollary~\ref{th:s_lower3} and~\eqref{eq:beta_s_beta}, 
$\s_{2m}(2m+2) \geq 4m+7$ for odd $m$. 
Hence, Theorems \ref{th:R_s}, \ref{th:s_2m_2m}, 
\ref{th:s_2m_2m_1_odd}, and \ref{th:s_2m_2m_1_even} yield

\begin{corollary}\label{th:Enomoto_my_1}
\[
  R_{2m}(n) \; = \;  
  \begin{cases} 
    2m + 1, & \mbox{if }\; n = 4m+1, \\
    2m + 1, & \mbox{if }\; 4m+2 \leq n \leq 4m+4, \; m \mbox{ is odd}, \\
    2m + 2, & \mbox{if }\; 4m+5 \leq n \leq 4m+6, \; m \mbox{ is odd}, \\
    2m + 2, & \mbox{if }\; n = 4m+2, \; m \mbox{ is even}.
  \end{cases}
\]
\end{corollary}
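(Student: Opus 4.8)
The plan is to invoke Theorem~\ref{th:R_s}, which turns the computation of $R_{2m}(n)$ into the task of locating $n$ inside the intervals $\bigl[\s_{2m}(d-1),\,\s_{2m}(d)\bigr)$, and then to substitute the values of $\s_{2m}(d)$ for $d = 2m$ and $d = 2m+1$ supplied by Theorems~\ref{th:s_2m_2m}, \ref{th:s_2m_2m_1_odd}, and \ref{th:s_2m_2m_1_even}, together with a lower bound on $\s_{2m}(2m+2)$.

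First I would collect the input data: $\s_{2m}(2m) = 4m+1$ from Theorem~\ref{th:s_2m_2m}; $\s_{2m}(2m+1) = 4m+5$ for odd $m$ from Theorem~\ref{th:s_2m_2m_1_odd}; and $\s_{2m}(2m+1) = 4m+2$ for even $m$ from Theorem~\ref{th:s_2m_2m_1_even}. Next I would establish the lower bounds on $\s_{2m}(2m+2)$ that pin down the right end of the last relevant interval. For even $m$, \eqref{eq:recursive} together with Theorem~\ref{th:s_2m_2m_1_even} gives $\s_{2m}(2m+2) \geq \s_{2m}(2m+1) + 1 = 4m+3$. For odd $m$, Corollary~\ref{th:s_lower3} applied with $d = 2m+2 \geq 2m+1$ gives $\beta_{2m}(2m+2) \geq 2(2m+2) + 2 = 4m+6$, and then the left inequality in \eqref{eq:beta_s_beta} yields $\s_{2m}(2m+2) \geq \beta_{2m}(2m+2) + 1 \geq 4m+7$.

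Finally I would read off the statement from Theorem~\ref{th:R_s}. For odd $m$: from $\s_{2m}(2m) = 4m+1$ and $\s_{2m}(2m+1) = 4m+5$ one gets $R_{2m}(n) = 2m+1$ exactly for $4m+1 \leq n \leq 4m+4$, which accounts for both the case $n = 4m+1$ and the range $4m+2 \leq n \leq 4m+4$ in the statement; and from $\s_{2m}(2m+1) = 4m+5$ together with $\s_{2m}(2m+2) \geq 4m+7$ one gets $R_{2m}(n) = 2m+2$ for $4m+5 \leq n \leq 4m+6$. For even $m$: from $\s_{2m}(2m) = 4m+1$ and $\s_{2m}(2m+1) = 4m+2$ one gets $R_{2m}(4m+1) = 2m+1$, and from $\s_{2m}(2m+1) = 4m+2$ together with $\s_{2m}(2m+2) \geq 4m+3$ one gets $R_{2m}(4m+2) = 2m+2$. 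These are precisely the four listed cases.

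Since every ingredient is already proved, this corollary is pure bookkeeping; the only point requiring a little care is that the ranges of $n$ in the statement must be recognized as exactly those sub-intervals of $\bigl[\s_{2m}(d-1),\,\s_{2m}(d)\bigr)$ on which the available exact values and lower bounds on $\s_{2m}$ force the answer — in particular, I would not claim anything for $n \geq 4m+7$ (odd $m$) or $n \geq 4m+3$ (even $m$), since the exact value of $\s_{2m}(2m+2)$ is not known there. The single step that is not entirely automatic is the bound $\s_{2m}(2m+2) \geq 4m+7$ for odd $m$, but it too collapses in one line to Corollary~\ref{th:s_lower3} and \eqref{eq:beta_s_beta}.
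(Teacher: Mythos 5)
Your proposal is correct and follows exactly the paper's route: the paper likewise derives $\s_{2m}(2m+2)\geq 4m+3$ for even $m$ from \eqref{eq:recursive} and Theorem~\ref{th:s_2m_2m_1_even}, derives $\s_{2m}(2m+2)\geq 4m+7$ for odd $m$ from Corollary~\ref{th:s_lower3} and \eqref{eq:beta_s_beta}, and then reads off the four cases from Theorem~\ref{th:R_s} using the exact values in Theorems~\ref{th:s_2m_2m}, \ref{th:s_2m_2m_1_odd}, and \ref{th:s_2m_2m_1_even}. No gaps.
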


The next statement (which was also proved in \cite{Sidorenko:2017}) 
can be easily derived from Theorem~\ref{th:R_s} 
and~\eqref{eq:Bassalygo}:

\begin{corollary}\label{th:s_d_large}
For any fixed $m$,
$\;
  \s_{2m}(d) = \Theta\left(2^{d/m}\right) 
$
as $d\rightarrow\infty\;$.
\end{corollary}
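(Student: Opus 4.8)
The plan is to use Theorem~\ref{th:R_s} to realise $\s_{2m}(d)$, up to an additive constant, as the functional inverse of $n \mapsto R_{2m}(n)$, and then to invert the asymptotic estimate \eqref{eq:Bassalygo}.

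First I would record that $\s_{2m}$ is strictly increasing in $d$: taking $k=2$ in \eqref{eq:recursive} gives $\s_{2m}(d+1) \ge \s_{2m}(d) + 1$, while \eqref{eq:s_lower} with $k=2$ gives $\s_{2m}(d) \ge 2m + d$, so in particular $\s_{2m}(d) \to \infty$ as $d \to \infty$. Now fix a large $d$ and set $n_d := \s_{2m}(d) - 1$. Then $\s_{2m}(d-1) \le n_d < \s_{2m}(d)$, so Theorem~\ref{th:R_s} yields $R_{2m}(n_d) = d$.

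Since $n_d \to \infty$ while $m$ stays fixed, I would then apply \eqref{eq:Bassalygo} with $n = n_d$: it provides a constant $C$ with $\bigl| R_{2m}(n)/m - \log_2 n \bigr| \le C$ for all $n$ large enough in terms of $m$. Substituting $R_{2m}(n_d) = d$ gives $\bigl| d/m - \log_2 n_d \bigr| \le C$, hence $\log_2 n_d = d/m + O(1)$ and therefore $2^{-C}\, 2^{d/m} \le n_d \le 2^{C}\, 2^{d/m}$ for all sufficiently large $d$. Since $\s_{2m}(d) = n_d + 1$, this is exactly $\s_{2m}(d) = \Theta(2^{d/m})$ as $d \to \infty$, with implied constants depending only on $m$.

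There is essentially no obstacle here beyond bookkeeping: the one point that needs care is that \eqref{eq:Bassalygo} is an asymptotic statement as $n \to \infty$ with $m$ fixed, so one must first confirm that $n_d \to \infty$ before inverting it — which is precisely what the strict monotonicity of $\s_{2m}$ supplies — and then check that the constants absorbed into the $\Theta$ depend only on $m$.
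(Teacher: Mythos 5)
Your proof is correct and follows exactly the route the paper indicates: the corollary is stated there as an easy consequence of Theorem~\ref{th:R_s} and \eqref{eq:Bassalygo}, and your argument — setting $n_d = \s_{2m}(d)-1$, deducing $R_{2m}(n_d)=d$ from Theorem~\ref{th:R_s} via the strict monotonicity in \eqref{eq:recursive}, and then inverting $R_{2m}(n)/m = \log_2 n + O(1)$ — is precisely that derivation, with the bookkeeping (e.g.\ $n_d\to\infty$) done carefully.
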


It follows from \eqref{eq:BCH} and Theorem \ref{th:R_s} that 
$\; \limsup_{d\rightarrow\infty} \s_{2m}(d) \: 2^{-d/m} \geq 1$. 
We can improve this bound for odd $m$. 
Namely, \eqref{eq:beta_s_beta} and 
Theorems \ref{th:beta_123_2m}, \ref{th:beta_24_2m}, \ref{th:beta_d-1} 
yield $s_{2m}(d) \geq 2N(d-1,2m+1) + 2$. 
Coupled with \eqref{eq:BCH}, it leads to

\begin{corollary}
If $m$ is odd, 
$\; 
  {\displaystyle\limsup_{d\rightarrow\infty}} \; \s_{2m}(d) \: 2^{-d/m} 
    \; \geq \; 2^{1-1/m} \; .
$ 
\end{corollary}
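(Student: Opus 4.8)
The plan is to combine the chain of inequalities indicated in the text, namely $\s_{2m}(d) \ge 2N(d-1,2m+1)+2$ for odd $m$, with the primitive BCH lower bound \eqref{eq:BCH}, and then to evaluate the resulting estimate along the arithmetic progression of dimensions $d \equiv 1 \pmod m$, where the BCH bound is sharpest.

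First I would record the lower bound $\s_{2m}(d) \ge 2N(d-1,2m+1)+2$ for odd $m$. By Theorem~\ref{th:beta_d-1} one has $\beta_{2m}(d) \ge 2\beta_{2[1,m]}(d-1)$; Theorem~\ref{th:beta_24_2m} gives $\beta_{2[1,m]}(d-1) = \beta_{[1,2m]}(d-1)+1$; and Theorem~\ref{th:beta_123_2m} identifies $\beta_{[1,2m]}(d-1) = N(d-1,2m+1)$. Feeding these into the (trivial) left-hand inequality of \eqref{eq:beta_s_beta} yields
\[
  \s_{2m}(d) \;\ge\; \beta_{2m}(d)+1 \;\ge\; 2\bigl(N(d-1,2m+1)+1\bigr)+1 \;=\; 2N(d-1,2m+1)+3 \;\ge\; 2N(d-1,2m+1)+2 \;.
\]

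Next I would apply \eqref{eq:BCH} with design parameter $t=m$: for every integer $\mu\ge 1$ there is a primitive $(2^\mu-1,\,2^\mu-1-\mu m,\,2m+1)$ BCH code, hence $N(\mu m,\,2m+1)\ge 2^\mu-1$. Specializing the previous display to $d=\mu m+1$, so that $d-1=\mu m$, gives
\[
  \s_{2m}(\mu m+1) \;\ge\; 2N(\mu m,\,2m+1)+2 \;\ge\; 2(2^\mu-1)+2 \;=\; 2^{\mu+1}\;.
\]
Since $d/m=\mu+1/m$ for these values of $d$, we get $\s_{2m}(d)\,2^{-d/m}\ge 2^{\mu+1}\cdot 2^{-\mu-1/m}=2^{1-1/m}$, and letting $\mu\to\infty$ (so $d\to\infty$ along $d=\mu m+1$) proves $\limsup_{d\to\infty}\s_{2m}(d)\,2^{-d/m}\ge 2^{1-1/m}$.

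I do not anticipate a genuine obstacle here: once Theorems~\ref{th:beta_123_2m}, \ref{th:beta_24_2m}, \ref{th:beta_d-1} and \eqref{eq:beta_s_beta} are available, the argument is pure bookkeeping. The only point that wants a moment's care is the restriction $d\equiv 1\pmod m$, which makes $d-1$ an exact multiple of $m$ so that \eqref{eq:BCH} can be invoked with the full value $2^\mu-1$; for other residues one would round $d-1$ down to the nearest multiple of $m$ and lose a bounded multiplicative factor, which is exactly why the conclusion is stated as a $\limsup$ rather than a limit.
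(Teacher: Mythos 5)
Your proof is correct and follows essentially the same route as the paper: the author obtains $\s_{2m}(d) \ge 2N(d-1,2m+1)+2$ from \eqref{eq:beta_s_beta} together with Theorems~\ref{th:beta_123_2m}, \ref{th:beta_24_2m} and~\ref{th:beta_d-1}, and then couples this with the BCH bound \eqref{eq:BCH} exactly as you do, evaluating along $d=\mu m+1$. (Your intermediate constant $+3$ is even marginally sharper than the stated $+2$, which is harmless.)
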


By Theorem \ref{th:beta_s_4}, 
bounds on $N(d,5)$ from \cref{sec:codes} 
translate into bounds on $\s_4(d)$. 
For $d \leq 10$, we get the exact values:

\vspace{3mm}
\begin{tabular}{cccccccccccc}
  $d$           & 1 & 2 & 3 & 4 &  5 &  6 &  7 &  8 &  9 & 10
    \\
  $\s_4(d)$      & 5 & 6 & 7 & 9 & 10 & 12 & 15 & 21 & 27 & 37
\end{tabular}
\vspace{3mm}

\section{Proofs of theorems}\label{sec:proofs}

For $A\subseteq\Z_2^d$, we denote the sum of elements of $A$ by $\sum A$. 

\begin{proof}[\bf{Proof of Theorem \ref{th:beta_123_2m}}]
First, we will prove $\beta_{[1,2m]}(d) \geq N(d,2m+1)$. 
Consider a linear code $C$ of length $n=N(d,2m+1)$, redundancy $d$ 
and distance at least $2m+1$. 
Its parity check matrix $M$ has size $d \times n$. 
Since $C$ has no words of weights $1,2,\ldots,2m$, 
the sum of any $k\in\{1,2,\ldots,2m\}$ columns of $M$ 
is not a zero vector. 
It means that the columns of $M$, 
being interpreted as $n$ vectors in $\Z_2^d$, 
form a set without zero-sum subsets of sizes $2m$ and less.

To prove $\beta_{[1,2m]}(d) \leq N(d,2m+1)$, 
consider a set $A$ of size $\beta_{[1,2m]}(d)$ in $\Z_2^d$ 
which has no zero-sum subsets of sizes $2m$ and less. 
If $A$ does not contain a basis in $\Z_2^d$, 
then there exists a vector $x \in \Z_2^d$ 
which cannot be represented as a sum of some vectors from $A$. 
Then $A \cup \{ x \}$ would have no zero-sum subsets of sizes $2m$ and less 
which contradicts with the maximality of $|A|$. 
Hence, $A$ contains a basis. 
Then a $d \times |A|$ matrix, 
whose columns are the vectors in $A$, 
has rank $d$ 
and is a parity check matrix of a code of length $n$, 
redundancy $d$ and distance at least $2m+1$.
\end{proof}

\begin{proof}[\bf{Proof of Theorem \ref{th:beta_24_2m}}]
Consider a set $A$ of size $\beta_{[1,2m]}(d)$ in $\Z_2^d$ 
which has no zero-sum subsets of sizes $1,2,\ldots,2m$. 
In particular, $0 \notin A$. 
It is obvious that $A \cup \{0\}$ 
does not have zero-sum subsets of sizes $2,4,\ldots,2m$. 
Hence, $\beta_{2[1,m]}(d) \geq \beta_{[1,2m]}(d) + 1$. 

To prove the opposite inequality, 
consider a set $B$ of size $\beta_{2[1,m]}(d)$ in $\Z_2^d$ 
which has no zero-sum subsets of sizes $2,4,\ldots,2m$. 
Select $y \in B$ and define $B_y = \{x+y \; | \; x \in B \}$. 
Notice that $B_y$ does not have 
zero-sum subsets of sizes $2,4,\ldots,2m$ 
and contains the zero vector. 
Then $B_y \backslash \{0\}$ will have no zero-sum subsets of sizes $1,2,\ldots,2m$. 
Therefore, $\beta_{[1,2m]}(d) \geq |B_y \backslash \{0\}| = \beta_{2[1,m]}(d) - 1$.
\end{proof}

\begin{lemma}\label{th:beta_d_small_1}
If $A\subset\Z_2^d$, $|A| > 2^{d-1}$ and $\sum A \neq 0$, 
then there exists $B \subset A$ such that 
$|B| = |A|-2$ and $\sum B = 0$.
\end{lemma}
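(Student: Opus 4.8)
The plan is to exploit a simple counting/pigeonhole argument on the $2^{d-1}$-element subgroups (cosets of hyperplanes) of $\Z_2^d$, combined with the fact that removing two elements preserves parity of the number of removed elements. First I would set $s = \sum A \neq \0$ and look for a single pair $\{u,v\}\subseteq A$ with $u+v = s$, because then $B = A\setminus\{u,v\}$ has $|B| = |A|-2$ and $\sum B = s - (u+v) = \0$, as desired. So the whole lemma reduces to: \emph{if $|A| > 2^{d-1}$ and $\0 \neq s \in \Z_2^d$, then $A$ contains two elements summing to $s$.}

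To prove that reduced claim, consider the involution $\phi\colon \Z_2^d \to \Z_2^d$ given by $\phi(x) = x + s$. Since $s \neq \0$, $\phi$ is a fixed-point-free involution, so it partitions $\Z_2^d$ into exactly $2^{d-1}$ pairs $\{x, x+s\}$. If $A$ contained at most one element from each such pair, we would have $|A| \leq 2^{d-1}$, contradicting $|A| > 2^{d-1}$. Hence some pair $\{u, u+s\}$ lies entirely in $A$; taking $v = u+s$ gives $u + v = s$ with $u,v \in A$ and $u \neq v$ (again because $s \neq \0$), completing the argument.

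There is no serious obstacle here; the only point requiring a moment's care is that $u \neq v$, which is exactly where the hypothesis $s \neq \0$ is used (if $s$ were $\0$ the "pairs" would be singletons and the argument would collapse). This is also the reason the lemma needs $\sum A \neq \0$ rather than just a size bound. I would present the argument essentially in the two paragraphs above: reduce to finding a pair summing to $s$, then apply the pigeonhole principle to the $2^{d-1}$ cosets of the order-$2$ subgroup generated by $s$.
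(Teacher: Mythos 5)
Your proof is correct and is essentially the same argument as the paper's: both reduce the lemma to finding a pair $u,v\in A$ with $u+v=\sum A$ and then count pairs summing to $\sum A$ against $|A|>2^{d-1}$ (the paper counts solutions $(x,y)$ of $x+y=\sum A$ meeting the complement of $A$; you phrase it as pigeonhole on the $2^{d-1}$ orbits of the fixed-point-free involution $x\mapsto x+\sum A$). Both also use $\sum A\neq \0$ in exactly the same place, to guarantee $u\neq v$.
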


\begin{proof}[\bf{Proof}]
There are exactly $2^d$ solutions $(x,y)$ 
of the equation $x+y = \sum A$ where $x,y \in \Z_2^d$. 
Let $\overline{A} = \Z_2^d \backslash A$. 
The number of solutions with $x \in \overline{A}$ or $y \in \overline{A}$ 
is at most $2|\overline{A}| = 2 (2^d - |A|) < 2^d$. 
Hence, there exists a solution $(x,y)$ where $x,y \in A$. 
As $x+y = \sum A \neq 0$, we get $x \neq y$.
Set $B = A \backslash \{x,y\}$. 
Then $|B| = |A|-2$ and $\sum B = 0$.
\end{proof}

\begin{lemma}\label{th:beta_d_small_2}
If $A\subset\Z_2^d$, $|A| \geq 2^{d-1} + 2$, 
then there exists $B \subset A$ such that 
$|B| = |A|-3$ and $\sum B = 0$.
\end{lemma}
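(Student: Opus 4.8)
The plan is to split on the value of $s=\sum A$, reducing the case $s\neq 0$ to Lemma~\ref{th:beta_d_small_1} and treating the case $s=0$ by a short translation/pigeonhole argument. Note first that since deleting a three-element set $\{x,y,z\}\subseteq A$ leaves a set $B$ with $\sum B = s+x+y+z$ in $\Z_2^d$, the statement is equivalent to exhibiting three distinct elements of $A$ whose sum equals $s$. Observe also that $|A|\ge 2^{d-1}+2$ together with $|A|\le 2^d$ forces $d\ge 2$, hence $|A|\ge 4$.

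When $s\neq 0$ the reduction is immediate. I would pick any $w\in A$ with $w\neq s$ (possible since $|A|\ge 2$) and set $A_1=A\setminus\{w\}$. Then $|A_1|=|A|-1>2^{d-1}$ and $\sum A_1 = s+w\neq 0$, so Lemma~\ref{th:beta_d_small_1} applied to $A_1$ produces $B\subset A_1\subseteq A$ with $|B|=|A_1|-2=|A|-3$ and $\sum B=0$. For the case $s=0$, deleting one element does not help because Lemma~\ref{th:beta_d_small_1} requires a nonzero sum; instead I would look directly for three distinct elements of $A$ summing to $0$. Put $A'=A\setminus\{0\}$, so $|A'|\ge|A|-1\ge 2^{d-1}+1$. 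Fix $a\in A'$. The translate $a+A'=\{a+x:x\in A'\}$ also has size $\ge 2^{d-1}+1$, so $|A'\cap(a+A')|\ge 2|A'|-2^d\ge 2$; choose $x$ in this intersection. Then $x\in A'$ and $a+x\in A'$, and since $0\notin A'$ we cannot have $x=a$. Hence $x$, $a$, and $y:=a+x$ are three distinct elements of $A$ with $x+a+y=0$, and $B=A\setminus\{x,a,y\}$ satisfies $|B|=|A|-3$ and $\sum B = s+(x+a+y)=0$.

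The \emph{main obstacle} is the case $s=0$: one must recognize that Lemma~\ref{th:beta_d_small_1} gives nothing there, and that the right move is to use the sharper hypothesis $|A|\ge 2^{d-1}+2$ rather than merely $|A|>2^{d-1}$ — the extra ``$+2$'' is exactly what guarantees that, after removing the zero vector, the set $A'$ still exceeds the threshold $2^{d-1}$, so that $A'$ and its translate $a+A'$ are forced to overlap.
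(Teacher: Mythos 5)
Your proof is correct, and your case $s\neq 0$ is exactly the paper's argument; but your case split is unnecessary, and the ``main obstacle'' you identify is illusory. The paper's entire proof is the one-line reduction you use for $s\neq 0$, applied uniformly: since $|A|\geq 2$, there is some $x\in A$ with $x\neq\sum A$; removing it gives $\sum(A\setminus\{x\})=\sum A+x\neq 0$ and $|A\setminus\{x\}|=|A|-1>2^{d-1}$, so Lemma~\ref{th:beta_d_small_1} finishes. This works perfectly well when $s=\sum A=0$: the condition $x\neq s$ then just says $x\neq 0$, and the deleted-by-one set has sum $x\neq 0$, so your claim that ``deleting one element does not help because Lemma~\ref{th:beta_d_small_1} requires a nonzero sum'' is mistaken --- the deletion is precisely what \emph{creates} the nonzero sum. (Note also that the extra ``$+2$'' in the hypothesis is consumed by this uniform argument just as it is by yours: it is what keeps $|A|-1$ strictly above $2^{d-1}$.) Your separate translation/pigeonhole argument for $s=0$ is nevertheless sound --- the intersection bound $|A'\cap(a+A')|\geq 2|A'|-2^d\geq 2$ and the verification that $x$, $a$, $a+x$ are pairwise distinct are all correct --- and it has the mild virtue of producing the three deleted elements explicitly without invoking Lemma~\ref{th:beta_d_small_1}; but it is strictly more work than the reduction you already had in hand.
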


\begin{proof}[\bf{Proof}]
As $|A| \geq 2$, there exists $x \in A$ such that $x \neq \sum A$. 
Set $A_1 = A \backslash \{x\}$, then $\sum A_1 \neq 0$. 
By Lemma~\ref{th:beta_d_small_1}, there exists $B \subset A_1$ such that
$|B| = |A|-3$ and $\sum B = 0$.
\end{proof}

\begin{proof}[\bf{Proof of Theorem \ref{th:beta_d_small}}]
We will prove the lower bound first. 
In the case $m \neq 2^{d-1} - 2$, 
select $A \subseteq \Z_2^d$ such that 
$|A|=2m+2$ and $\sum A = 0$. 
If $B \subset A$, $|B|=2m$, and $A \backslash B = \{x,y\}$, 
then $\sum B = \sum A - (x+y) = x+y \neq 0$.
Hence, $\beta_{2m}(d) \geq 2m+2$. 
In the case $m = 2^{d-1} - 2$, 
the bound $\beta_{2m}(d) \geq 2m$ is trivial.

Now we will prove the upper bound. 
If $m \neq 2^{d-1} - 2$, 
consider $A\subset\Z_2^d$ where $|A| = 2m+3 \geq 2^{d-1} + 3$. 
By Lemma~\ref{th:beta_d_small_2}, there exists $B \subset A$ 
such that $|B|=2m$ and $\sum B = 0$. 
Therefore, $\beta_{2m}(d) \leq 2m+2$. 
If $m = 2^{d-1} - 2$, 
consider $A\subset\Z_2^d$ where $|A| = 2m+1 = 2^d - 3$. 
Let $x = \sum A$, and $\Z_2^d \backslash A = \{a,b,c\}$. 
Since $x = \sum A = a+b+c = a + (b+c) \neq a$ 
(and similarly, $x \neq b,c$), 
we conclude that $x \in A$. 
Set $B = A \backslash \{x\}$. Then $|B|=2m$ and $\sum B = 0$, 
so $\beta_{2m}(d) \leq 2m$.
\end{proof}

\begin{proof}[\bf{Proof of Theorem \ref{th:beta_d-1}}]
Consider a set $A$ of size $b \leq \beta_{2[1,m]}(d-1)$ in $\Z_2^{d-1}$ 
which does not have zero-sum subsets of sizes $2,4,\ldots,2m$. 
For $i=0,1$, obtain $A_i \subset \Z_2^d$ 
by attaching $i$ to each vector of $A$ as the $d$'th entry. 
We claim that $A_0 \cup A_1$ does not have 
zero-sum subsets of size $2m$. 
Indeed, suppose $X \subseteq (A_0 \cup A_1)$, $|X|=2m$. 
Let 
$X_i = \{(z_1,z_2,\ldots,z_{d-1}) | \; (z_1,z_2,\ldots,z_{d-1},i) \in X\}$ 
and $Y = (X_0 \cup X_1) \backslash (X_0 \cap X_1)$.
Then $X_0,X_1,Y$ are subsets of $A$. 
If $\sum X = 0$ then $|X_0|, |X_1|$ are even and 
$\sum X_0 + \sum X_1 = 0$. 
Notice that $|X_0 \cap X_1|=m$ is impossible, 
as it would imply $|X_0|=|X_1|=m$, but $m$ is odd. 
Hence, $\sum Y = \sum X_0 + \sum X_1 - 2 \sum (X_0 \cap X_1) = 0$ 
while $|Y| = 2m - 2|X_0 \cap X_1|$ is even and not equal to zero. 
Selecting $b = \beta_{2[1,m]}(d-1)$ in $\Z_2^{d-1}$, we get 
$\beta_{2m}(d) \geq |A_0 \cup A_1| = 2\beta_{2[1,m]}(d-1)$.

When $b$ is even, $\sum (A_0 \cup A_1) = 0$, 
so $A_0 \cup A_1$ does not have zero-sum subsets of size $2b-2m$ either. 
Hence, $\beta_{2b-2m}(d) \geq |A_0 \cup A_1| = 2b$.
\end{proof}

\begin{proof}[\bf{Proof of Theorem \ref{th:s_max_beta}}]
We will show first that 
$\s_{2m}(d) \geq 1 + \beta_{2[j,m]}(d) + (2m-2j)$. 
Indeed, consider a set $A$ of size $\beta_{2[j,m]}(d)$ in $\Z_2^d$ 
which does not have zero-sum subsets of sizes $2j,2j+2,\ldots,2m$. 
Select $x \in A$ and form a sequence from all the elements of $A$ 
plus $(2m-2j)$ extra copies of $x$. 
It is easy to see that this sequence does not have zero-sum subsequences of size $2m$. 
Hence, 
$\s_{2m}(d) \geq 1 + \max_{1 \leq j \leq m} \left\{\beta_{2[j,m]}(d) + (2m-2j)\right\}$.
Now we need to prove the opposite inequality. 
Let $s$ be an integer such that 
$s > \beta_{2[j,m]}(d) + (2m-2j)$ 
for every $j=1,2,\ldots,m$. 
Define the product of an integer $n$ and $z\in\Z_2^d$ 
as $z$ if $n$ is odd, and $0$ if $n$ is even.
Let $f(z)$ be the number of appearances of $z\in\Z_2^d$ 
in a sequence of length $s$ over $\Z_2^d$. 
We are going to show that there exists 
a nonnegative integer function $g$ on $\Z_2^d$ 
such that $g \leq f$, 
$\;\sum_{z\in\Z_2^d} g(z) = 2m$, 
and $\sum_{z\in\Z_2^d} g(z)z = 0$. 
Indeed, set $f_1(z)=1$ if $f(z)$ is odd, 
and $f_1(z)=0$ if $f(z)$ is even. 
Set $f_2(z) = f(z) - f_1(z)$. 
All values of $f_2(z)$ are even. 
If $\sum_{z\in\Z_2^d} f_2(z) \geq 2m$, 
then there exists $g \leq f_2$ such that all values of $g$ are even and 
$\sum_{z\in\Z_2^d} g(z) = 2m$. 
Hence, we can assume that 
$\sum_{z\in\Z_2^d} f_2(z) = 2(m-j)$ where $j\in\{1,2,\ldots,m\}$. 
Let $A = \{ z\in\Z_2^d: \; f_1(z)=1 \}$. 
Since 
$|A| = \sum_{z\in\Z_2^d} (f(z)-f_2(z)) = s - 2(m-j) > \beta_{2[j,m]}(d)$, 
there exists $B \subseteq A$ 
such that $|B| = k \in \{2j,2j+2,\ldots,2m\}$ 
and $\sum_{z \in B} z = 0$. 
Set $f_B(z)=1$ if $z \in B$, and $f_B(z)=0$ otherwise. 
Choose a function $f_3 \leq f_2$ such that 
all values of $f_3$ are even and 
$\sum_{z\in\Z_2^d} f_3(z) = 2m - k$. 
Then $f_3+f_B$ is the required function $g$.
\end{proof}

\begin{proof}[\bf{Proof of Theorem \ref{th:beta_s_4}}]
By definition, $\beta_4(d)=\beta_{\{2,4\}}(d)$. 
Thus, Theorems~\ref{th:beta_123_2m} and~\ref{th:beta_24_2m} 
imply $\beta_4(d) = N(d,5)+1$, 
while Theorem~\ref{th:s_max_beta} implies $\s_4(d) = \beta_4(d) + 3$. 
\end{proof}

\begin{proof}[\bf{Proof of Theorem \ref{th:R_s}}]
We will show first that 
$n < \s_{2m}(d)$ implies $R_{2m}(n) \leq d$, 
and then show that $n \geq \s_{2m}(d-1)$ implies $R_{2m}(n) \geq d$. 
As $\s_{2m}(d-1) < \s_{2m}(d)$ (see \eqref{eq:recursive}), 
these two statements establish the theorem.

Let $n < \s_{2m}(d)$.
Then there exists a sequence $S$ of length $n$ over $\Z_2^d$ 
which does not have zero-sum subsequences of size $2m$. 
Write the vectors of $S$ as $d \times n$ binary matrix $M$. 
This matrix does not have $2m$ columns that sum up to a zero vector.
Its rank is $r \leq d$. 
Take $r$ independent rows of $M$ 
to get an $r \times n$ matrix $M_1$ of rank $r$. 
We claim that $M_1$ does not have $2m$ columns 
that sum up to a zero vector. 
Indeed, any row of $M$ that is not in $M_1$ 
is the sum of some rows of $M_1$. 
If $M_1$ had a set of $2m$ columns which sum up to a zero vector, 
then the same columns in $M$ would also sum up to a zero vector. 
Let $C$ be the linear code whose parity check matrix is $M_1$. 
This code has length $n$, redundancy $r$ 
and does not have a word of weight $2m$. 
Hence, $R_{2m}(n) \leq r \leq d$.

Let $n \geq \s_{2m}(d-1)$. 
Consider a linear code $C$ of length $n$ and redundancy $r$ 
which does not have words of weight $2m$. 
Let $M$ be an $r \times n$ parity check matrix of $C$. 
If $r \leq d-1$, then $n \geq \s_{2m}(r)$, 
and there exists a set of $2m$ columns in $M$ that sum up to a zero vector. 
It contradicts with the assumption that $C$ does not have words of weight $2m$. 
Hence, $r \geq d$. We have proved that $R_{2m}(n) \geq d$.
\end{proof}

\begin{proof}[\bf{Proof of Theorem \ref{th:s_m_small}}]
It follows from \eqref{eq:Enomoto1} 
that $R_{2m}(2m+d)=d+1$ when $d<2m$. 
Hence, by Theorem~\ref{th:R_s},  
$\s_{2m}(d) \leq 2m + d$. 
The opposite inequality is provided by~\eqref{eq:s_lower}. 
\end{proof}

\begin{observation}\label{th:operations}
The following operations do not change the fact 
whether a binary matrix has a set of $2m$ columns 
that sum up to a zero vector: 
permutations of columns (rows), 
additions of one row to another, 
additions of the same vector to each column. 
In particular, bringing a matrix to binormal form 
by Lemma~\ref{th:Enomoto2} does not change the fact whether 
such a set of $2m$ columns exists.
\end{observation}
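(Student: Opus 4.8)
The plan is to handle the three listed operations one at a time, since any admissible transformation is a composition of them; for each operation I would show that the family of $2m$-element sets of column indices whose columns sum to $\0$ is either left literally unchanged or mapped bijectively to itself.

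First I would dispose of the two permutation cases. A permutation of the columns only relabels indices, so it carries a zero-summing $2m$-subset to a zero-summing $2m$-subset, and being invertible it sets up a bijection in both directions; a permutation of the rows does not affect whether a fixed collection of column vectors sums to $\0$, because a vector equals $\0$ precisely when all its coordinates vanish, irrespective of the order in which one lists them. Next I would treat adding row $p$ to row $q$ (with $p\neq q$): if the columns indexed by a set $J$ summed to $\0$, then in that sum the $p$-th and $q$-th coordinates were both $0$, so after the operation the new $q$-th coordinate is $0+0=0$ and every other coordinate is unchanged, whence $J$ still works; since over $\F_2$ this operation is an involution, the correspondence is again bijective. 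Finally, for adding a fixed vector $v$ to every column I would simply compute, for $|J|=2m$, that $\sum_{j\in J}(c_j+v)=\bigl(\sum_{j\in J}c_j\bigr)+2m\,v=\sum_{j\in J}c_j$, using that $2m\,v=\0$ because $2m$ is even; thus the set of zero-summing $2m$-subsets is literally identical before and after. (This is the one point where evenness of $2m$ is used; the statement genuinely fails for odd forbidden weights.)

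The \emph{in particular} clause then follows immediately: Lemma~\ref{th:Enomoto2} reduces a matrix to binormal form using only permutations of columns and additions of one row to another, and both of these have just been shown to preserve the property, so the binormal form has $2m$ columns summing to $\0$ if and only if the original matrix does. I do not expect any real obstacle here; the only subtleties worth stating explicitly are the role of evenness noted above and the fact that Lemma~\ref{th:Enomoto2} is invoked only for matrices that meet its hypotheses ($n$ odd, rank $k$, zero row sums, $2k<n$), so the reduction is legitimate exactly in that regime.
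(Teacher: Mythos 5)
Your proof is correct and is exactly the routine verification the paper intends; the Observation is stated without proof precisely because these checks (index relabelling, coordinatewise vanishing, the involution property of row addition, and $2m\,v=\0$ for even $2m$) are immediate. Your explicit remark that evenness of $2m$ is the one essential hypothesis is a worthwhile clarification, but there is no divergence from the paper's approach.
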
 

\begin{lemma}\label{th:rank}
Let $M$ be an $d \times n$ binary matrix whose rank is less than $d$. 
If $n \geq \s_{2m}(d-1)$, 
then $M$ contains $2m$ columns that sum up to a zero vector. 
\end{lemma}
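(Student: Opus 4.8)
The plan is to reduce to the defining property of $\s_{2m}(d-1)$ by passing to a parity-check matrix of strictly smaller redundancy, exactly as in the proof of Theorem~\ref{th:R_s}. Since $\operatorname{rank}(M) = r < d$, I would first pick $r$ linearly independent rows of $M$ to form an $r \times n$ matrix $M_1$ of rank $r$. Every row of $M$ not chosen is an $\F_2$-linear combination of the rows of $M_1$, so if some set of $2m$ columns of $M_1$ sums to the zero vector in $\F_2^r$, then the same set of columns in $M$ sums to the zero vector in $\F_2^d$ (each discarded coordinate is a fixed linear functional of the retained coordinates, hence also vanishes). Thus it suffices to find $2m$ columns of $M_1$ summing to zero.

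Now view the $n$ columns of $M_1$ as a sequence of length $n$ over $\Z_2^r$. Because $r \leq d-1$, we have $n \geq \s_{2m}(d-1) \geq \s_{2m}(r)$, where the second inequality follows from iterating \eqref{eq:recursive} (each step increases $\s_{2m}$ by at least $1 \geq 0$, so $\s_{2m}$ is nondecreasing in the number of coordinates). By the definition of $\s_{2m}(r)$, every sequence of length at least $\s_{2m}(r)$ over $\Z_2^r$ contains a zero-sum subsequence of length $2m$; applied to the columns of $M_1$, this yields $2m$ columns of $M_1$ summing to the zero vector. Combining with the previous paragraph, $M$ contains $2m$ columns summing to zero, as claimed.

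There is really no obstacle here: the statement is a mild repackaging of one direction of Theorem~\ref{th:R_s}, phrased for later reuse (presumably in the proofs of Theorems~\ref{th:s_2m_2m}, \ref{th:s_2m_2m_1_odd}, and \ref{th:s_2m_2m_1_even}, where one will want to argue that a large matrix of deficient rank automatically has the desired $2m$ columns, so the interesting case is full rank). The only point to state carefully is the monotonicity $\s_{2m}(r) \leq \s_{2m}(d-1)$ for $r \leq d-1$, which is immediate from \eqref{eq:recursive}, and the fact that deleting a linearly dependent coordinate cannot destroy a zero-sum relation among columns, which is transparent over $\F_2$.
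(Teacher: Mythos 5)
Your proof is correct and takes essentially the same approach as the paper: both arguments exploit the fact that row operations (or discarding linearly dependent rows) cannot affect whether some set of $2m$ columns sums to zero, and then invoke the definition of $\s_{2m}$ on a matrix with fewer rows. The only cosmetic difference is that the paper zeroes out a single row and applies the definition of $\s_{2m}(d-1)$ directly, whereas you reduce to the full rank $r$ and additionally invoke the monotonicity $\s_{2m}(r)\leq\s_{2m}(d-1)$ from \eqref{eq:recursive}; both are valid.
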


\begin{proof}[\bf{Proof}]
Use additions of one row to another to make the last row entirely zero. 
By Observation~\ref{th:operations}, it does not change the fact 
whether there is a set of $2m$ columns that sum up to a zero vector. 
As $n > \s_{2m}(d-1)$, such a set of columns must exist. 
By Observation~\ref{th:operations}, 
the same is true for the original matrix. 
\end{proof}

\begin{proof}[\bf{Proof of Theorem \ref{th:s_2m_2m}}]
When $k=2$ and $d=2m$, \eqref{eq:s_lower2} yields 
$\s_{2m}(2m) \geq 4m+1$. 
To prove $\s_{2m}(2m) \leq 4m+1$, 
consider a sequence $S$ of size $4m+1$ over $\Z_2^{2m}$. 
We need to prove that $S$ has a zero-sum subsequence of size $2m$.
Let $x$ be the sum of all elements of $S$. 
Add $x$ to each element of $S$ 
and denote the resulting sequence by $S_0$. 
If $S_0$ has a zero-sum subsequence of size $2m$, then $S$ has it, too. 

Consider a $(2m) \times (4m+1)$ binary matrix $M$ 
whose columns represent the vectors from $S_0$. 
By Theorem~\ref{th:s_m_small}, $4m+1 \geq \s_{2m}(2m-1)$. 
If the rank of $M$ is less than $2m$, 
then by Lemma~\ref{th:rank}, it has a set of $2m$ columns 
that sum up to a zero vector. 
Suppose that the rank of $M$ is $2m$. 
Since the sum of all vectors of $S_0$ is a zero vector, 
$M$ satisfies the conditions of Corollary~\ref{th:Enomoto}. 
Thus, there are $2m$ columns in $M$ whose sum is a zero vector.
\end{proof}

\begin{lemma}\label{th:2m_1_2m_1}
Let $C=[c_{ij}]$ be a $t \times t$ binary matrix.
If $C$ does not have a row where all off-diagonal entries are equal,
then it has three distinct rows $i,j,k$ such that 
$c_{ij}+c_{ik}=1$ and $c_{ji}+c_{jk}=1$. 
\end{lemma}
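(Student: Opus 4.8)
The plan is to argue by contradiction via a descent on the sizes of the ``sides'' of the rows. Suppose that $C$ admits no triple of the required kind; since the entries are binary, $c_{ij}+c_{ik}=1$ is the same as $c_{ij}\neq c_{ik}$, so this assumption unpacks to: for all pairwise distinct rows $i,j,k$ one has $c_{ij}=c_{ik}$ or $c_{ji}=c_{jk}$. Call this property $(\ast)$. Writing $[t]=\{1,2,\dots,t\}$, for each row $b$ set $P_b=\{j\in[t]\setminus\{b\}:\,c_{bj}=1\}$ and $Q_b=\{j\in[t]\setminus\{b\}:\,c_{bj}=0\}$, so that $[t]=\{b\}\sqcup P_b\sqcup Q_b$. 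The hypothesis that no row is constant off the diagonal says exactly that $P_b\neq\emptyset$ and $Q_b\neq\emptyset$ for every $b$ (for $t\le2$ this hypothesis can never hold, so the statement is vacuous and I assume $t\ge3$). Consequently $N:=\min_{b}\min\bigl(|P_b|,|Q_b|\bigr)$ is a well-defined positive integer, and I will reach a contradiction by producing some $P_a$ or $Q_a$ of size strictly less than $N$.

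The engine of the proof is the following structural consequence of $(\ast)$. Fix a row $a$, an index $i\in P_a$, and an index $b\in Q_a$. Then $a,i,b$ are pairwise distinct, and $c_{ai}=1\neq0=c_{ab}$, so in $(\ast)$ applied to the triple $(a,i,b)$ the alternative $c_{ai}=c_{ab}$ is false; hence $c_{ia}=c_{ib}$. Letting $b$ range over $Q_a$, this shows that row $i$ is constant on $\{a\}\cup Q_a$, with common value $\alpha:=c_{ia}$. Since $[t]\setminus\{i\}=\{a\}\sqcup\bigl(P_a\setminus\{i\}\bigr)\sqcup Q_a$ (using $i\in P_a$), it follows that $\{a\}\cup Q_a\subseteq P_i$ when $\alpha=1$, and $\{a\}\cup Q_a\subseteq Q_i$ when $\alpha=0$; taking complements inside $[t]\setminus\{i\}$ gives $Q_i\subseteq P_a\setminus\{i\}$ in the first case and $P_i\subseteq P_a\setminus\{i\}$ in the second. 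In both cases, one of the two sets $P_i,Q_i$ is contained in $P_a\setminus\{i\}$.

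To finish, I pick a row $a$ and a side realising the minimum; swapping the roles of $0$ and $1$ interchanges every $P_\bullet$ with the corresponding $Q_\bullet$ and preserves both the hypothesis and the conclusion, so I may assume $|P_a|=N$. Choose any $i\in P_a$, which is possible because $N\ge1$. By the previous paragraph one of $P_i,Q_i$ lies inside $P_a\setminus\{i\}$, a set of size $N-1$; but $|P_i|\ge N$ and $|Q_i|\ge N$ by the definition of $N$, so $N\le N-1$, which is absurd. Hence $(\ast)$ cannot hold, so the required rows $i,j,k$ exist.

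The step I expect to be the main obstacle is isolating the correct structural claim — that $(\ast)$ forces one of $P_i,Q_i$ to nest inside $P_a\setminus\{i\}$ for every $i\in P_a$ — by testing $(\ast)$ precisely on the triples $(a,i,b)$ with $b\in Q_a$; once this ``shrinking'' phenomenon is identified, the minimality of $N$ closes the argument at once, and the remaining points (distinctness of the indices used, the $0\leftrightarrow1$ symmetry, and the vacuous cases $t\le2$) are routine.
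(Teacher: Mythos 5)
Your proof is correct. It is essentially the paper's argument recast in contrapositive form: both proofs reduce by the $0\leftrightarrow 1$ complementation symmetry to the row $a$ whose set $P_a$ of off-diagonal ones has the minimum size $N$, and your two cases $\alpha=0,1$ (yielding $P_i\subseteq P_a\setminus\{i\}$ resp.\ $Q_i\subseteq P_a\setminus\{i\}$ and hence $N\le N-1$) correspond exactly to the paper's case split on $c_{ji}$, where the same counting is used directly to exhibit the third index $k$ rather than to contradict minimality.
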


\begin{proof}[\bf{Proof}]
Let $a_i$ be the number of off-diagonal nonzero entries in row $i$, 
and $b_i = (t-1) - a_i$ be the number of off-diagonal zero entries. 
Let 
$a = \min\{ a_1,a_2,\ldots,a_t \}$\!, 
$b = \min\{ b_1,b_2,\ldots,b_t \}$. 
Since the statement of the lemma holds for matrix $[c_{ij}]$ 
if and only if it holds for matrix $[1-c_{ij}]$, 
we may assume that $a \leq b$. 
Let index $i$ be such that $a_i=a$. 
Since $a_i > 0$, there exists index $j \neq i$ such that $c_{ij}=1$. 
Consider two cases for $c_{ji}$. 

0). $c_{ji}=0$. 
As $a_j > a_i - 1$, there exists $k \neq i,j$ such that 
$c_{ik}=0$ and $c_{jk}=1$.

1). $c_{ji}=1$. 
As $a_i + a_j = a_i + (t-1 - b_j) \leq (t-1) + a_i - b < t$, 
there exists $k \neq i,j$ such that $c_{ik} = c_{jk} = 0$.
\end{proof}

Let $M$ be an $r \times c$ matrix, 
and $1 \leq r' \leq r'' \leq r$, $\: 1 \leq c' \leq c'' \leq c$. 
We denote by $M[r':r'',\: c':c'']$ the submatrix of $M$ formed by 
rows $r', r'+1, \ldots, r''$ and columns $c', c'+1, \ldots, c''$. 

\begin{lemma}\label{th:2m_1_4m_5}
Let $k$ be even, 
and $M$ be a $(k+1) \times (2k+5)$ matrix in binormal form. 
One can pick up $k$ columns in $M$ whose sum is 
the $(k+1)$-dimensional zero vector.
\end{lemma}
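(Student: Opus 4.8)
The plan is to turn the statement into a finite search over ``configurations.'' Group the $2k+5$ columns of $M$ into the $k+1$ \emph{pairs} $P_i=\{2i-1,2i\}$ (for $i=1,\dots,k+1$), whose two columns differ only in row $i$, so they sum to the $i$-th unit vector, together with the three remaining columns $g_1,g_2,g_3\in\F_2^{k+1}$. For $S\subseteq\{1,\dots,k+1\}$ and $F\subseteq\{1,2,3\}$ put $w(S,F)=\sum_{i\in S}(\text{column }2i-1)+\sum_{l\in F}g_l$ and $D(S,F)=\operatorname{supp}(w(S,F))\setminus S$. First I would prove: if $|S|+2|D|+|F|=k$, then taking both columns of every $P_j$ with $j\in D$, every $g_l$ with $l\in F$, and one (suitably chosen) column of every $P_i$ with $i\in S$ yields $k$ columns summing to $\0$. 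The reason is that on the rows outside $S$ the choices inside the pairs of $S$ are irrelevant and the definition of $D$ already forces the column sum to vanish there, whereas on the rows of $S$ the pairs of $S$ restricted to those rows form a matrix in binormal form, so Lemma~\ref{th:Enomoto1} lets us choose inside those pairs so as to cancel the contribution of $F$. Hence it suffices to exhibit one pair $(S,F)$ with $|S|+2|D(S,F)|+|F|=k$.

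Trying $S=\{1,\dots,k+1\}\setminus\{a\}$ and $F=\emptyset$ gives $|S|+2|D|+|F|=k+2\sum_{i\ne a}M_{a,2i-1}$, so we are done unless we are in the \emph{bad case}: $\sum_{i\ne a}M_{a,2i-1}=1$ for every $a$. Form the $(k+1)\times(k+1)$ matrix $\mu$ with $\mu_{a,i}=M_{a,2i-1}$ (for $a\ne i$ this also equals $M_{a,2i}$). Since $k$ is even, a row of $\mu$ with all off-diagonal entries equal to $\delta$ would satisfy $\sum_{i\ne a}\mu_{a,i}=k\delta\equiv 0\pmod 2$, contradicting the bad case; so no row of $\mu$ is off-diagonal-constant, and Lemma~\ref{th:2m_1_2m_1} produces distinct $a,b,c$ with $\mu_{ab}+\mu_{ac}=\mu_{ba}+\mu_{bc}=1$.

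Now take $S=\{1,\dots,k+1\}\setminus\{a,b,c\}$. In the bad case $w_a=1+(\mu_{ab}+\mu_{ac})=0$, $w_b=0$, and $w_c=1+(\mu_{ca}+\mu_{cb})$. If $\mu_{ca}+\mu_{cb}=0$ then $w_c=1$, $|D|=1$, and $|S|+2|D|+|F|=(k-2)+2=k$, so we are done with $F=\emptyset$. The one remaining case is $\mu_{ab}+\mu_{ac}=\mu_{ba}+\mu_{bc}=\mu_{ca}+\mu_{cb}=1$, and here the extras must be spent. With $F=\{l_1,l_2\}$ and the same $S$ one computes $w|_{\{a,b,c\}}=(g_{l_1}+g_{l_2})|_{\{a,b,c\}}$, so the count equals $k$ precisely when $g_{l_1}$ and $g_{l_2}$ agree on $\{a,b,c\}$; while with $S=\{1,\dots,k+1\}\setminus\{x,y\}$ for a $2$-subset $\{x,y\}\subset\{a,b,c\}$ and $F=\{l\}$ the count equals $k$ precisely when $(g_l)_x=1+\mu_{xy}$ and $(g_l)_y=1+\mu_{yx}$, i.e.\ when $g_l$ restricted to $\{x,y\}$ matches a prescribed pattern $t^{xy}$. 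The three equations $\mu_{ab}+\mu_{ac}=\mu_{ba}+\mu_{bc}=\mu_{ca}+\mu_{cb}=1$ make the three subsets $\{v\in\F_2^{\{a,b,c\}}:v|_{\{x,y\}}=t^{xy}\}$ pairwise disjoint, hence together they cover exactly $6$ of the $8$ vectors of $\F_2^{\{a,b,c\}}$. Since $g_1,g_2,g_3$ restricted to $\{a,b,c\}$ give three vectors of $\F_2^{\{a,b,c\}}$, either two of them coincide (and the first configuration applies) or at least one of them is covered (and the second configuration applies); in every case a valid configuration exists.

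The main obstacle is exactly this last case: once Lemma~\ref{th:2m_1_2m_1} returns a triple all of whose three off-diagonal pair-sums equal $1$, every configuration built from the pairs alone fails, and one has to argue that the three extra columns always repair it; the disjointness count above --- $6$ of the $8$ vectors covered, so three vectors cannot all avoid the cover without two of them colliding --- is the crux, and it is precisely here that having three columns beyond the binormal part is used.
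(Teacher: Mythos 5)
Your proof is correct, and its skeleton is the same as the paper's: both first dispose of the case where some row of the pair-matrix $\mu=C$ has off-diagonal sum $0$ via Lemma~\ref{th:Enomoto1}, then invoke Lemma~\ref{th:2m_1_2m_1} (using that $k$ is even to rule out off-diagonal-constant rows), and finish with a finite counting argument in the $3$-row window indexed by the resulting triple, which is where the three columns beyond the binormal part are spent. Your $(S,F)$-configuration framework with the identity ``$|S|+2|D|+|F|=k$ suffices'' is a clean systematization of what the paper does ad hoc (taking both columns of a pair contributes a unit vector; Lemma~\ref{th:Enomoto1} absorbs any target on the rows of $S$), and I verified the key computations: the disjointness of the three cylinders $A_{xy}\subset\F_2^{\{a,b,c\}}$ follows exactly from the three relations $\mu_{ab}+\mu_{ac}=\mu_{ba}+\mu_{bc}=\mu_{ca}+\mu_{cb}=1$, so they cover $6$ of the $8$ vectors and three restricted extras cannot all be distinct and all uncovered. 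The only genuine divergence is this endgame: the paper instead observes that the $3\times 9$ window formed by the last three pairs and the three extras must contain two columns summing to $(0,0,x)^T$ for either value of $x$ (by pigeonhole on $9>8$ columns when $x=0$, or via the pair differing only in the last row when $x=1$), and completes with $k-2$ pair-columns; this avoids your sub-case split on $\mu_{ca}+\mu_{cb}$ but uses the three extra columns for the same purpose. Both arguments are valid; yours is a touch longer but makes the role of each extra column more explicit.
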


\begin{proof}[\bf{Proof}]
Let $M=[a_{ij}]$. 
Let $C=[c_{ij}]$ be a $(k+1) \times (k+1)$ binary matrix 
where $c_{ij} = a_{i,2j-1} = a_{i,2j}$ for $i \neq j$. 
The values of diagonal entries $c_{ii}$ are not important 
and may be set arbitrarily. 
Suppose, there is a row in $C$ where 
the sum of off-diagonal entries is $0$. 
Without loss of generality, we may assume 
that it is the last row, so
$\sum_{i=1}^{k} c_{k+1,i} = 0$. 
Since $M[1:k,1:2k]$ is in binormal form, 
by Lemma~\ref{th:Enomoto1}, 
there is a set of $k$ indices $j(i) \in \{ 2i-1,2i \}$ 
$\; (i=1,2,\ldots,k)$ 
such that the sum of columns $j(1),j(2),\ldots,j(k)$ 
in $M[1:k,1:2k]$ is the $k$-dimensional zero vector. 
Since $a_{k+1,j(i)} = c_{k+1,i}$, 
the condition $\sum_{i=1}^{k} c_{k+1,i} = 0$ 
guarantees that the sum of columns $j(1),j(2),\ldots,j(k)$ in $M$ 
is the $(k+1)$-dimensional zero vector. 
Thus, we may assume that the sum of off-diagonal entries 
in each row of $C$ is equal to $1$. 
It means that $C$ satisfies the conditions of Lemma~\ref{th:2m_1_2m_1}. 
Hence, without loss of generality, we may assume that 
$c_{k-1,k} + c_{k-1,k+1} = 1$ and 
$c_{k,k-1} + c_{k,k+1} = 1$. 
Since the sum of off-diagonal entries in any row of $C$ is $1$, we get 
$\sum_{j=1}^{k-2} c_{ij} = 0$ for $i=k-1,k$. 
Let $x = \sum_{j=1}^{k-2} c_{k+1,j}$. 
Then the sum of all columns 
in the $3 \times (k-2)$ matrix $C[k-1:k+1,\: 1:k-2]$ 
is equal to $(0,0,x)^T$. 
We claim that the $3 \times 9$ matrix $M[k-1:k+1,\: 2k-3:2k+5]$ 
has two columns whose sum is $(0,0,x)^T$. 
Indeed, among $9$ columns there must be two equal, their sum is $(0,0,0)^T$. 
Since $M$ is in binormal form, 
its columns $(2k+1)$ and $(2k+2)$ differ only in the last entry. 
Hence, $M[k-1:k+1, 2k-3:2k+5]$ 
contains two rows whose sum is $(0,0,1)^T$. 
Now we select two columns in $M[k-1:k+1,\: 2k-3:2k+5]$ 
whose sum is $(0,0,x)^T$ 
and call the columns of $M$ which contain them {\it special}. 
Let $(y_1,y_2,\ldots,y_{k+1})^T$ be the sum of the two special columns. 
We already know that $y_{k-1}=y_{k}=0$ and $y_{k+1}=x$. 
Since $M[1:k-2,\:1:2k-4]$ is in binormal form, 
by Lemma~\ref{th:Enomoto1}, 
there is a set of $k-2$ indices $l(i) \in \{ 2i-1,2i \}$ 
$\; (i=1,2,\ldots,k-2)$ 
such that the sum of columns $l(1),l(2),\ldots,l(k-2)$ 
in $M[1:k-2,\:1:2k-4]$ is equal to $(y_1,y_2,\ldots,y_{k-2})^T$. 
Then the sum of these $k-2$ columns plus the two special columns in $M$ 
is the $(k+1)$-dimensional zero vector.
\end{proof}

\begin{proof}[\bf{Proof of Theorem \ref{th:s_2m_2m_1_odd}}]
By \eqref{eq:beta_s_beta} and Corollary~\ref{th:s_lower3}, 
$\s_{2m}(2m+1) \geq \linebreak \beta_{2m}(2m+1) + 1 \geq 4m+5$. 
To prove $\s_{2m}(2m+1) \leq 4m+5$, 
consider a sequence $S$ of size $4m+5$ over $\Z_2^{2m+1}$. 
We need to prove that $S$ has a zero-sum subsequence of size $2m$.
Let $x$ be the sum of all elements of $S$. 
Add $x$ to each element of $S$ and denote the resulting sequence by $S_0$. 
If $S_0$ has a zero-sum subsequence of size $2m$, then $S$ has it, too. 

Consider a $(2m+1) \times (4m+5)$ binary matrix $M$ 
whose columns represent the vectors from $S_0$. 
By Theorem~\ref{th:s_2m_2m}, $4m+5 \geq \s_{2m}(2m)$. 
If the rank of $M$ is less than $2m+1$, 
then by Lemma~\ref{th:rank}, it has a set of $2m$ columns 
that sum up to a zero vector. 

Suppose that the rank of $M$ is $2m+1$. 
Since the sum of all vectors of $S_0$ is a zero vector, 
$M$ satisfies the conditions of Lemma~\ref{th:Enomoto2} 
and can be brought to matrix $M'$ in binormal form 
by permutations of columns and additions of one row to another.
By Lemma~\ref{th:2m_1_4m_5}, $M'$ has a set of $2m$ columns 
that sum up to a zero vector. 
Then by Observation~\ref{th:operations}, $M$ has such a set, too.
\end{proof}

\begin{proof}[\bf{Proof of Theorem \ref{th:beta_odd}}]
By \eqref{eq:beta_s_beta}, $\beta_{2m}(d) \leq \s_{2m}(d) - 1$. 
It remains to prove $\beta_{2m}(d) \geq \s_{2m}(d) - 1$.
The values of $\s_{2m}(d)$ for $d \leq 2m+1$ were determined in 
Theorems~\ref{th:s_m_small}, \ref{th:s_2m_2m}, 
and~\ref{th:s_2m_2m_1_odd}. 
Theorems~\ref{th:beta_24_2m} and~\ref{th:beta_d-1} yield 
$\beta_{2m}(d) \geq 2\beta_{[1,2m]}(d-1) + 2$. 
The set of $d-1$ basis vectors in $\Z_2^{d-1}$ demonstrates that 
$\beta_{[1,2m]}(d-1) \geq d-1$, so we get 
\[
  \beta_{2m}(2m-1) \; \geq \;
  2(2m-2)+2 \; = \; 4m-2 \; = \; \s_{2m}(2m-1)-1 , 
\]
\[
  \beta_{2m}(2m  ) \; \geq \; 
  2(2m-1)+2 \; = \; 4m   \; = \; \s_{2m}(2m  )-1 . 
\]
By \eqref{eq:even_beta_1}, $\beta_{[1,2m]}(2m) \geq 2m+1$, 
and hence, 
\[ 
  \beta_{2m}(2m+1) \; \geq \; 
  2(2m+1)+2 \; = \; 4m+4 \; = \; \s_{2m}(2m+1)-1 . 
\] 
We use Theorem~\ref{th:beta_d-1} 
with odd $m \geq 1$ and $b = 2m+2 \leq \beta_{2[1,m]}(2m)$ 
to get 
$\beta_{2m+4}(2m+1) = \beta_{2b-2m}(2m+1) \geq 2b = 4m+4$ 
for $m \geq 1$. 
Substituting $m-2 \geq 1$ instead of $m$, we get
\[
  \beta_{2m}(2m-3) \; \geq \; 4m-4 \; = \; \s_{2m}(2m-3)-1 
  \;\;\;\mbox{for}\;\; m \geq 3 .
\] 
Finally, 
\[ 
  \beta_{2m}(2m-2) \; \geq \;
  \beta_{2m}(2m-3) + 1 \; \geq \; 4m-3 \; = \; \s_{2m}(2m-2)-1  
  \;\;\;\mbox{for}\;\; m \geq 3 .
\] 
\end{proof}

\begin{lemma}\label{th:code_without_2_and_4}
Let $C$ be a linear binary code of length $n \geq 10$. 
If $C$ does not have words of Hamming weight $2$ and $4$, 
then its dual code $C^\perp$ has a nonzero word of weight $l$ 
where $|l - n/2| \geq 2$.
\end{lemma}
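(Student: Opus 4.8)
\emph{Approach.} I would argue by contradiction: suppose that \emph{every} nonzero word of $C^\perp$ has weight $l$ with $|l-n/2|<2$, and derive two incompatible bounds on $2^{n-k}$, where $k=\dim C$ (note $A_2=0$ forces $C\ne\F_2^n$, so $k<n$). The first step is to extract a second–moment identity for the weight distribution of $C^\perp$ from the MacWilliams identities~\eqref{eq:MWI}. With $A_j,B_j$ as there, $A_0=1$ and $A_2=0$ by hypothesis; taking $\lambda=0,1,2$ in~\eqref{eq:MWI} lets one solve for $\sum_j B_j=2^{n-k}$, $\sum_j jB_j$, $\sum_j j^2B_j$, and substituting into $\sum_j(2j-n)^2B_j=4\sum_j j^2B_j-4n\sum_j jB_j+n^2\sum_j B_j$ the $A_1$–terms cancel, leaving
\[
  \sum_{j=0}^{n}(2j-n)^2\,B_j \;=\; n\cdot 2^{n-k}.
\]
This routine computation is the only place the MacWilliams identities are used.

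\emph{Upper bound on $2^{n-k}$.} A word of weight $l$ satisfies $|l-n/2|\ge 2$ exactly when $(2l-n)^2\ge 16$, so the contradiction hypothesis says $B_l=0$ for every $l\ge 1$ with $(2l-n)^2\ge 16$. For the surviving indices $l\ge 1$ the number $(2l-n)^2$ is a perfect square below $16$ of the same parity as $n$, hence at most $M$, where $M=4$ if $n$ is even and $M=9$ if $n$ is odd. Since $B_0=1$ and $\sum_{l\ge1}B_l=2^{n-k}-1$, the identity gives $n\cdot2^{n-k}\le n^2+M(2^{n-k}-1)$, i.e. (using $n>M$ for $n\ge 10$)
\[
  2^{n-k}\;\le\;\frac{n^2-M}{\,n-M\,}\;=\;n+M+\frac{M(M-1)}{\,n-M\,}.
\]

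\emph{Lower bound on $2^{n-k}$.} Let $H$ be an $(n-k)\times n$ parity-check matrix of $C$. Since $A_2=0$, all columns of $H$ are distinct and at most one is the zero vector (two zero columns would give a weight-$2$ word), so $H$ has a set $T$ of $n-1$ distinct nonzero columns. Since $A_4=0$, no four distinct columns of $H$ sum to $0$; it follows that the $\binom{n-1}{2}$ pairwise sums of the columns in $T$ are pairwise distinct and all nonzero, whence $\binom{n-1}{2}\le 2^{n-k}-1$, i.e.
\[
  2^{n-k}\;\ge\;\tfrac{(n-1)(n-2)}{2}+1 .
\]
For $n\ge 10$ the lower bound is quadratic in $n$ while the upper bound is $n+O(1)$, so the two are incompatible: a direct computation kills all even $n\ge 10$ and all odd $n\ge 13$, and for the single tight case $n=11$ the bounds read $46\le 2^{n-k}\le 56$, which is impossible because $2^{n-k}$ is a power of $2$. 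This contradiction proves the lemma.

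\emph{Main obstacle.} The crux is that the two halves must be combined. The MacWilliams second–moment identity only ``sees'' $A_2$ and, on its own, merely forces $2^{n-k}$ to be roughly linear in $n$ — which is perfectly consistent with $C$ having distinct parity-check columns — so it alone proves nothing. It is precisely the hypothesis $A_4=0$ that lifts $2^{n-k}$ to order $n^2$ and destroys the balance. Because the estimates are genuinely tight, one cannot escape a little numerical case-checking at $n=11$, where the integrality of $2^{n-k}$ is needed.
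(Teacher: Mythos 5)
Your proof is correct, but it takes a genuinely different route from the paper's. The paper also argues by contradiction via the MacWilliams identities, but it uses a designed linear combination of \eqref{eq:MWI} with $\lambda=1,2,3,4$ whose coefficients are chosen to eliminate the unconstrained quantities $A_1$ and $A_3$ and to equalize the contributions of all admissible dual weights; with $A_2=A_4=0$ this collapses to an exact equation (e.g.\ $n(n-2)(3\cdot 2^r-n(n+2))=0$ for even $n$) that has no solutions, so no case analysis or integrality trick is needed. You instead use only the second power moment ($\lambda\le 2$), where $A_2=0$ alone yields $\sum_j(2j-n)^2B_j=n\,2^{n-k}$ (your cancellation of the $A_1$-terms is correct; the identity also follows from a direct character-sum computation), and you bring in $A_4=0$ separately through an elementary packing argument: the pairwise sums of $n-1$ distinct nonzero parity-check columns are distinct and nonzero, giving $2^{n-k}\ge\binom{n-1}{2}+1$; note this correctly tolerates possible weight-$1$ and weight-$3$ codewords (at most one zero column, and three-term relations among columns are harmless). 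The resulting quadratic-versus-linear clash is sound: the parity bookkeeping ($M=4$ for even $n$, $M=9$ for odd $n$), the even case $n\ge 10$, the odd case $n\ge 13$, and the borderline $n=11$ all check out, and at $n=11$ your appeal to $2^{n-k}$ being a power of $2$ in $[46,56]$ is genuinely needed and valid. What your route buys is a lighter use of MacWilliams and a transparent separation of roles (the second moment only ``sees'' $A_2$, while $A_4=0$ enters combinatorially); what the paper's route buys is a uniform exact contradiction with no borderline numerics.
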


\begin{proof}[\bf{Proof}]
Suppose, to the contrary, that the weights of nonzero words of $C^\perp$ 
lay in the interval $[(n-3)/2,\: (n+3)/2]$.
Let 
$D_n = \{ 0,\: (n-3)/2,\: (n-1)/2,\: (n+1)/2,\: (n+3)/2 \}$ 
if $n$ is odd, and 
$D_n = \{ 0,\: (n-2)/2,\: n/2,\: (n+2)/2 \}$ 
if $n$ is even. 
Let $r$ be the dimension of $C^\perp$, 
so the dimension of $C$ is $k=n-r$. 
Let $A_j\;$ ($B_j$) 
denote the number of words of weight $j$ 
in $C\;$ ($C^\perp$). 
Then $A_0=1$, $A_2=A_4=0$, $B_0=1$, $B_j=0$ for $j \notin D_n$, 
and $\sum_{j \in D_n} B_j = 2^r$. 
Consider a linear combination of MacWilliams identities \eqref{eq:MWI} 
with $\lambda=1,2,3,4$, 
where coefficients $c_\lambda$ will be selected later:
\[
  \sum_{\lambda=1}^4 c_\lambda \;
  2^r \sum_{j=0}^\lambda \binom{n-j}{\lambda-j} A_j
    \; = \;
  \sum_{\lambda=1}^4 c_\lambda \;
  2^\lambda \sum_{j=0}^{n-\lambda} \binom{n-j}{\lambda} B_j
    \; .
\]
As $D_n \subseteq [0,n - \lambda]$ for $n \geq 10$, $\lambda \leq 4$, 
and $B_j=0$ for $j \notin D_n$, 
we can rewrite it as
\begin{equation}\label{eq:MWI1}
  2^r \sum_{\lambda=1}^4 c_\lambda 
    \sum_{j=0}^\lambda \binom{n-j}{\lambda-j} A_j
      \; = \;
  \sum_{j \in D_n} f_j B_j
      \; ,
\end{equation}
where 
\[
  f_j \; = \; 
  \sum_{\lambda=1}^4 c_\lambda \; 2^\lambda 
    \sum_{j=0}^{n-\lambda} \binom{n-j}{\lambda}
      \; .
\]
As $A_2=A_4=0$, the left hand side of \eqref{eq:MWI1} 
is a linear combination of $A_0,A_1,A_3$. 
We are going to choose coefficients $c_1,c_2,c_3,c_4$ in such a way 
that $A_1$ and $A_3$ are eliminated 
while the values of $f_j$ are equal for all $j \in D_n \backslash \{0\}$. 

If $n$ is even, set $c_1=4n(n-1)(n-2)$, $c_2=-12(n-2)^2$, 
$c_3=24(n-3)$, $c_4=-24$. 
In this case, we get $f_0=0$, 
$f_j = n^2(n+2)(n-2)$ for $j \in D_n \backslash \{0\}$, 
and \eqref{eq:MWI1} is reduced to 
\[
  2^r n(n-1)(n-2)(n+3) A_0 \; = \; 
  n^2(n+2)(n-2) \sum_{j \in D_n \backslash \{0\}} B_j \; .
\]
Since $A_0=1$ and $\sum_{j \in D_n \backslash \{0\}} B_j = 2^r - 1$, 
we can simplify it further to 
\[
  n(n-2)(3 \cdot 2^r - n(n+2)) \; = \; 0 \; ,
\]
which has no integer solutions for $n>6$. 

If $n$ is odd, set $c_1=4(n+1)(n-1)(n-3)$, $c_2=-12(n-1)(n-3)$, 
$c_3=24(n-3)$, $c_4=-24$. 
In this case, we get $f_0=0$, 
$f_j = (n+3)(n+1)(n-1)(n-3)$ for $j \in D_n \backslash \{0\}$, 
and \eqref{eq:MWI1} is reduced to 
\[
  2^r n(n-1)(n-3)(n+4) A_0 \; = \; 
  (n+3)(n+1)(n-1)(n-3) \sum_{j \in D_n \backslash \{0\}} B_j \; .
\]
Since $A_0=1$ and $\sum_{j \in D_n \backslash \{0\}} B_j = 2^r - 1$, 
we can simplify it further to 
\[
  (n-1)(n-3)(3 \cdot 2^r - (n+1)(n+3)) \; = \; 0 \; ,
\]
which has no integer solutions for $n>3$. 
\end{proof}

\begin{lemma}\label{th:beta_2_beta}
$\;
 \beta_{2[1,m]}(d) \; \leq \; \max\{9,\; 2 \beta_{2[1,m]}(d-1) - 4\} \; .
$
\end{lemma}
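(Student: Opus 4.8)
The plan is to feed the extremal configuration into the preceding Lemma~\ref{th:code_without_2_and_4}; since that lemma forbids the weights $2$ and $4$, this works for $m\ge 2$ (for $m=1$ the quantity $\beta_{2[1,1]}(d)=\beta_{\{2\}}(d)$ is degenerate and the argument is not needed). Write $\beta'=\beta_{2[1,m]}(d-1)$, and let $B\subseteq\Z_2^d$ realize $\beta_{2[1,m]}(d)$, so $n:=|B|=\beta_{2[1,m]}(d)$ and $B$ has no zero-sum subset of size $2,4,\ldots,2m$. If $n\le 9$ the asserted bound holds trivially, so I would assume $n\ge 10$.

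First I would pass to coding theory: list the elements of $B$ as the columns $v_1,\ldots,v_n$ of a $d\times n$ binary matrix $H$, and let $C$ be the code having $H$ as a parity-check matrix. A weight-$w$ codeword of $C$ is exactly a $w$-element subset of $B$ summing to $0$, so $C$ has no words of weight $2$ or $4$; as $n\ge 10$, Lemma~\ref{th:code_without_2_and_4} yields a nonzero word of $C^{\perp}$ of weight $l$ with $|l-n/2|\ge 2$. The observation I would emphasize is that $C^{\perp}$ is the row space of $H$, so such a word has the form $\sum_{i\in T}(\text{row }i\text{ of }H)$ for some nonempty $T\subseteq\{1,\ldots,d\}$; writing $\xi\in\Z_2^d\setminus\{0\}$ for the indicator of $T$, the $j$-th coordinate of the word is $\xi\cdot v_j$, so its weight is $l=|\{x\in B:\xi\cdot x=1\}|$.

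Next I would split $B=B_0\cup B_1$ with $B_\varepsilon=\{x\in B:\xi\cdot x=\varepsilon\}$, so $|B_1|=l$ and $|B_0|=n-l$. Each $B_\varepsilon$ lies in a coset of the hyperplane $\ker\xi\cong\Z_2^{d-1}$; picking $v\in B_\varepsilon$ and applying the affine map $x\mapsto x-v$, the key (routine) point is that an \emph{even}-size zero-sum subset $S\subseteq B_\varepsilon$ is carried to a zero-sum set, since the translation contributes $|S|\,v=0$. Hence the image of $B_\varepsilon$ is a subset of $\Z_2^{d-1}$ with no zero-sum subset of size $2,4,\ldots,2m$, so $|B_\varepsilon|\le\beta'$. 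Since $|l-n/2|\ge 2$ forces $\min(|B_0|,|B_1|)=\min(l,n-l)\le n/2-2$, I get $n=|B_0|+|B_1|\le(n/2-2)+\beta'$, i.e.\ $n\le 2\beta'-4$; together with the trivial case $n\le 9$ this is exactly the claim.

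The argument is short, so there is no single hard obstacle; the things to be careful with are the translation bookkeeping in the previous paragraph (it is essential that only \emph{even}-size zero-sum subsets are destroyed, which is why $\beta_{2[1,m]}$ rather than $\beta_{[1,2m]}$ appears), the identification of words of $C^{\perp}$ with functionals evaluated on $B$, and the fact that the hypothesis "$n\ge 10$'' of Lemma~\ref{th:code_without_2_and_4} is precisely what produces the $\max\{9,\cdot\}$.
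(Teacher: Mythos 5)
Your proof is correct and takes essentially the same route as the paper: both apply Lemma~\ref{th:code_without_2_and_4} to the code whose parity-check matrix lists the extremal set, obtain a dual word of weight $l$ with $|l-n/2|\ge 2$, and then drop to $\Z_2^{d-1}$ along the corresponding functional — the paper does this by making that word the first row of a parity-check matrix, keeping the larger class of columns and deleting the now-constant row, which is exactly the matrix form of your translation of a fiber into $\ker\xi$ (the parity trick that only even-size zero-sums matter appears in both). Your explicit restriction to $m\ge 2$ matches the paper's implicit one, since Lemma~\ref{th:code_without_2_and_4} needs weights $2$ and $4$ both forbidden.
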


\begin{proof}[\bf{Proof}]
Suppose, $\beta_{2[1,m]}(d) \geq 10$. 
We need to prove 
$\beta_{2[1,m]}(d) \leq 2 \beta_{2[1,m]}(d-1) - 4$. 
Consider a set $A$ of size $n=\beta_{2[1,m]}(d)$ in $\Z_2^d$ 
which does not have zero-sum subsets of sizes $2,4,\ldots,2m$. 
Write $n$ vectors of $A$ column-wise 
as a $d \times n$ binary matrix $M$. 
Similarly to the proof of Theorem~\ref{th:beta_123_2m}, 
the maximality of $|A|$ ensures that $M$ has rank $d$. 
Let $C$ be the linear code of length $n$ whose parity check matrix is $M$. 
This code does not have words of weight $2,4,\ldots,2m$. 
As $n \geq 10$, by Lemma~\ref{th:code_without_2_and_4}, 
the dual code $C^\perp$ has a word of weight $l$ 
where $|l - n/2| \geq 2$. 
Then there exists a parity check matrix $M_1$ of code $C$ 
such that this word is the first row of $M_1$. 
Notice that the sum of any $2k$ columns of $M_1$ is not a zero vector 
($k=1,2,\ldots,m$). 
If $l \geq (n+4)/2$, 
remove from $M_1$ all columns which contain $0$ in the first row. 
If $l \leq (n-4)/2$, 
remove from $M_1$ all columns which contain $1$ in the first row. 
The resulting matrix $M_2$ is of size $d \times t$ 
where $t \geq (n+4)/2$. 
All entries in the first row of $M_2$ are equal. 
Remove the first row to get matrix $M_3$ of size $(d-1) \times t$. 
As $M_2$ does not have sets of columns of size $2k\;$ ($k=1,2,\ldots,m$) 
which sum up to a zero vector, 
the same is true for $M_3$. 
Therefore, 
$\beta_{2[1,m]}(d-1) \geq t \geq (n+4)/2 =  (\beta_{2[1,m]}(d) + 4) / 2$.
\end{proof}

\begin{proof}[\bf{Proof of Theorem \ref{th:beta_s_6}}]
For $3 \leq d \leq 7$, the statement of the theorem follows from 
Theorem~\ref{th:beta_odd}. 
If $d \geq 8$, 
\eqref{eq:even_beta_1} and Theorem~\ref{th:beta_24_2m} yield 
$\beta_{\{2,4,6\}}(d) \geq 10$. 
We may apply Lemma~\ref{th:beta_2_beta} to get 
$\beta_{\{2,4,6\}}(d) \leq 2 \beta_{\{2,4,6\}}(d-1) - 4$. 
By Theorem~\ref{th:beta_d-1}, 
$\beta_6(d) \geq 2 \beta_{\{2,4,6\}}(d-1)
 \geq \beta_{\{2,4,6\}}(d) + 4$. 
By definition, $\beta_{\{4,6\}}(d) = \beta_{\{2,4,6\}}(d)$, 
hence, Theorem~\ref{th:s_max_beta} yields 
$
  \s_6(d) = 1+ \max\{\beta_{\{2,4,6\}}(d)+4,\; \beta_6(d)\}
          = 1 + \beta_6(d)
$. 
\end{proof}

\begin{lemma}\label{th:digraph}
Let $D$ be a digraph with $n \equiv 1 \pmod{4}$ vertices 
where every vertex has odd out-degree. 
Then one can find in $D$ either $3$ vertices 
that span a subgraph with $2$ vertices of odd out-degree, 
or $5$ vertices 
that span a subgraph with $3$ vertices of odd out-degree.
\end{lemma}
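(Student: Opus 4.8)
The plan is to recast the statement over $\F_2$ and reduce it to Lemma~\ref{th:2m_1_2m_1}. Let $M=[\alpha_{ij}]$ be the adjacency matrix of $D$ over $\F_2$, with $\alpha_{ii}=0$; then for $S\subseteq V(D)$ and $v\in S$ the parity of the out-degree of $v$ in $D[S]$ is the $v$-th coordinate of $M\mathbf 1_S$. So I must find $S$ with $|S|=3$ and exactly two coordinates of $(M\mathbf 1_S)|_S$ equal to $1$, or $|S|=5$ and exactly three. Since $n\equiv1\pmod4$, the number $n-1$ is even, so replacing every off-diagonal $\alpha_{ij}$ by $1-\alpha_{ij}$ leaves all out-degree parities unchanged, and for $|S|\in\{3,5\}$ (where $|S|-1$ is even) it leaves every within-$S$ out-degree parity unchanged as well; this complementation symmetry preserves both hypothesis and conclusion and can be used to shorten the casework.

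First I would note that $n\equiv1\pmod4$ forbids out-degrees $0$ and $n-1$, both even while every out-degree is odd, so $M$ has no row whose off-diagonal entries are all equal, and Lemma~\ref{th:2m_1_2m_1} produces distinct $i,j,k$ with $\alpha_{ij}+\alpha_{ik}=\alpha_{ji}+\alpha_{jk}=1$; thus $i$ and $j$ have odd out-degree in $D[\{i,j,k\}]$. If $k$ has even out-degree there, $\{i,j,k\}$ is the required $3$-set; otherwise it is a \emph{bad triple} in which all three out-degrees equal $1$, so $D[\{i,j,k\}]$ is a directed triangle, or a digon together with one further arc from the third vertex into a digon-vertex. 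From here I assume no $3$-set is good.

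Next I would extract the rigid structure this assumption forces. Split the pairs of $D$ into the graph $G$ of one-directional pairs and the graph $H$ of digons. A short inspection of the few triple-configurations that fail to span a subgraph with exactly two odd out-degrees shows that ``no good $3$-set'' forces: on any $G$-independent set, $H$ has $0$ or $3$ edges on every triple (so $H$ is empty or complete on each such set of size $\ge3$); and every induced path $P_3$ of $G$ is oriented either as an out-star from its centre with no arc on the opposite pair, or as an in-star to its centre with a digon on the opposite pair. Moreover $|E(G)|$ is odd, because $|E(D)|=\sum_v d^+(v)\equiv n\equiv1\pmod2$ and $|E(D)|\equiv|E(G)|\pmod2$; in particular $G$ is nonempty and $D$ is not a tournament, since a tournament on $n\equiv1\pmod4$ vertices has scores summing to the even number $\binom n2$ and hence cannot have all scores odd.

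The remaining, and hardest, part is to build a good $5$-set from a bad triple $T=\{i,j,k\}$. For each $\ell\notin T$, the fact that none of $\{i,j,\ell\},\{j,k,\ell\},\{k,i,\ell\}$ is good pins the four arcs between $\ell$ and $T$ down to a short list of patterns, so the outside vertices fall into a few classes; I would then show that two outside vertices $\ell,\ell'$ can be chosen --- using the structural constraints above and $n\equiv1\pmod4$ --- so that $D[T\cup\{\ell,\ell'\}]$ has exactly three vertices of odd out-degree. (The case $n=5$ is special, as then the only $5$-set is $V(D)$ itself, with all $n=5$ out-degrees odd; but there the rigid structure above is directly incompatible with every out-degree being odd, so a good $3$-set must already exist.) The main obstacle is precisely this case analysis on the arc-patterns of the outside vertices relative to the bad triple, and the subtle point is that $n\equiv1\pmod4$ must really be used --- the statement is false for $n\equiv3\pmod4$, witnessed by the directed triangle on three vertices --- so the parity is invoked both to apply Lemma~\ref{th:2m_1_2m_1} and to break the rigid ``no good $3$-set'' structure.
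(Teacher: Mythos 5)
Your reduction to $\F_2$ and your opening move are fine: since every out-degree is odd and $n$ is odd, no row of the adjacency matrix has all off-diagonal entries equal, so Lemma~\ref{th:2m_1_2m_1} gives a triple in which two prescribed vertices have odd out-degree, hence a triple of type $2$ (done) or of type $3$. Your structural observations under the assumption that no triple has type $2$ (digons on a one-directionally-independent triple come in $0$ or $3$, the two admissible orientations of an induced $P_3$, the parity of the number of one-directional pairs) also check out. But the proof stops exactly where the lemma begins to be hard: the entire construction of a type-$3$ quintuple is replaced by ``I would then show that two outside vertices $\ell,\ell'$ can be chosen \dots''. That step is the substance of the lemma, it is not routine, and nothing in your sketch indicates how $n\equiv 1\pmod 4$ (as opposed to $n$ odd, which is all your first step uses) would be brought to bear; as you yourself note, the statement fails for $n\equiv 3\pmod 4$, so this is precisely the point that cannot be waved through. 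Incidentally, an outside vertex $\ell$ has six, not four, adjacency entries toward the triple, so even the promised classification of ``patterns'' is not set up correctly.

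For comparison, the paper does not start from Lemma~\ref{th:2m_1_2m_1} at all. It sums the types of all triples: each off-diagonal entry is counted $n-2$ times, so this sum is odd, while $\binom{n}{3}$ is even because $n\equiv 1\pmod 4$; hence, assuming no type-$2$ triple, there exist a type-$0$ triple and an odd-type triple sharing a pair, and a quadruple containing two of each. It then normalizes by a switching operation (complementing all out-arcs at a vertex, which preserves types of odd-sized sets), partitions the remaining vertices into classes $W_0,W_1,W_2,W_3$ relative to this quadruple, shows the subgraph on $W_1$ has no digons, and runs a maximal-transitive-tournament argument inside $W_1$ to exhibit either a type-$2$ triple (contradiction) or a type-$3$ quintuple --- which, notably, is not of the form ``bad triple plus two outside vertices'' that your plan commits to, so it is not even clear your intended target configuration always exists. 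Until you carry out an argument of this calibre (or a genuinely different one) for the quintuple, the proposal is a plausible plan with a genuine gap, not a proof.
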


\begin{proof}[\bf{Proof}]
Let the vertex set of $D$ be $\{1,2,\ldots,n\}$. 
Let $C=[c_{ij}]$ be the adjacency matrix: 
$c_{ij}=1$ if the arc $(i,j)$ is present in $D$,
otherwise, $c_{ij}=0\:$ ($i \neq j)$. 
The diagonal entries $c_{ii}$ are zeros. 

For a subset $A$ of vertices we define the {\it type} of $A$ 
as the number of vertices of odd out-degree 
in the subgraph of $D$ spanned by $A$. 
We need to show that there exists either 
a triple of type $2$ or a quintuple of type $3$. 
For a triple $\{i,j,k\}$, let $t(i,j,k)$ denote its type.
Similarly, for a quintuple $\{i,j,k,l,m\}$, 
its type is denoted by $t(i,j,k,l,m)$. 
Suppose, $D$ contains no triple of type $2$, 
so every triple of even type must have type $0$. 
We are going to show that there is a quintuple of type $3$.

The size of $C$ is odd, 
and the sum of the entries in each row is odd.
Hence, the sum of all entries of $C$ is odd. 
In the sum of expressions 
$f(i,j,k) = c_{ij} + c_{ji} + c_{ik} + c_{ki} + c_{jk} + c_{kj}$ 
over all triples $\{i,j,k\}$, 
each off-diagonal entry of $C$ appears $n-2 \equiv 1 \pmod{2}$ times. 
Notices, that $t(i,j,k)$ is odd (even) 
when $f(i,j,k)$ is odd (even). 
Hence, the sum of the types of all triples is odd.
As $n \equiv 1 \pmod{4}$, 
the number of all triples, $\binom{n}{3}$, is even. 
It means that there exists at least one triple of even type, 
and at least one triple of odd type. 
Then we can find a triple of even type 
and a triple of odd type that share a pair.
Notice that the expression $f(i,j,k)+f(i,j,l)+f(i,j,l)+f(j,k,l)$ 
is always even, because each arc of the subgraph 
spanned by $\{i,j,k,l\}$ is counted there twice. 
Thus, the sum of types of the $4$ triples within one quadruple is even. 
In particular, a quadruple which contains 
a triple of even type and a triple of odd type 
must contain two triples of even type and two triples of odd type. 
Without limiting generality, we can assume that 
triples $\{1,2,3\}$, $\{2,3,4\}$ are of type $0$, 
and $\{1,2,4\}$, $\{1,3,4\}$ are of odd type. 
Hence, 
\begin{equation*}
  c_{12}=c_{13}, \;\;\;
  c_{42}=c_{43}, \;\;\;
  c_{21}=c_{23}=c_{24}, \;\;\;
  c_{31}=c_{32}=c_{34}, \;\;\;
  c_{14} \neq c_{41}. 
\end{equation*}

Notice that the following operation on $D$ 
preserves the types of all subsets of odd sizes 
(in particular of sizes $3,5,n$): 
for a given vertex $i$, 
remove all arcs $(i,j)$ that were present in $D$,
and add all arcs $(i,j)$ that were not present. 
It is equivalent to replacing each off-diagonal entry $c_{ij}$ 
in row $i$ of $C$ with $1-c_{ij}$. 
We apply this {\it switching} operation, 
if necessary, to some of vertices $1,2,3,4$ to ensure 
\begin{equation}\label{eq:ccc2}
c_{12}=c_{13}=c_{42}=c_{43}=c_{21}=c_{23}=c_{24}=c_{31}=c_{32}=c_{34}=0.
\end{equation}
As indices $1$ and $4$ appear 
in \eqref{eq:ccc2} symmetrically, 
and $c_{14} \neq c_{41}$, 
we can assume, without limiting generality, 
that $c_{14}=1$ and $c_{41}=0$. 
Define
\begin{multline*}
  V_1 = \{ i\in\{4,5,\ldots,n\}: \;
            t(i,2,3)=0, \; \\
            t(i,1,2) \equiv 1 \; ({\rm mod}\: 2), \; 
            t(i,1,3) \equiv 1 \; ({\rm mod}\: 2)
        \} \; ,
\end{multline*}
\begin{multline*}
  V_2 = \{ i\in\{4,5,\ldots,n\}: \;
            t(i,1,3)=0, \; \\
            t(i,2,3) \equiv 1 \; ({\rm mod}\: 2), \; 
            t(i,1,2) \equiv 1 \; ({\rm mod}\: 2)
        \} \; ,
\end{multline*}
\begin{multline*}
  V_3 = \{ i\in\{4,5,\ldots,n\}: \;
            t(i,1,2)=0, \; \\
            t(i,1,3) \equiv 1 \; ({\rm mod}\: 2), \; 
            t(i,2,3) \equiv 1 \; ({\rm mod}\: 2)
        \} \; ,
\end{multline*}
\[
  W_0 = \{ i\in\{4,5,\ldots,n\}: \;
            t(i,1,2) = t(i,1,3) = t(i,2,3) = 0 \} \; ,
\]
\[
  W_1 = V_1 \cup \{1\} \; , \;\;\;\;\;\;
  W_2 = V_2 \cup \{2\} \; , \;\;\;\;\;\;
  W_3 = V_3 \cup \{3\} \; .
\]
As the sum of types of all triples within one quadruple is even, 
$\{W_0,W_1,W_2,W_3\}$ form a partition of $\{1,2,\ldots,n\}$. 
Notice that $4 \in V_1$. 

We claim that $t(i,j,3)=0$ for any $i \in V_1$ and $j \in V_2$ 
(if $V_2$ is empty, this statement becomes trivial).
Indeed, by the definition of $V_1$ and $V_2$, 
$t(1,2,3)=t(i,2,3)=t(1,j,3)=0$ 
while $t(i,1,2)$, $t(i,1,3)$, $t(j,2,3)$, $t(j,1,2)$ are odd. 
Suppose, $t(i,j,3)$ is odd. Then 
$t(i,j,1) \equiv t(i,j,3) + t(i,1,3) + t(j,1,3) \equiv 0 \pmod{2}$ 
and 
$t(i,j,2) \equiv t(i,j,3) + t(i,2,3) + t(j,2,3) \equiv 0 \pmod{2}$, 
which means that there are $5$ triples of type $0$ in cyclic pattern: 
\[
  t(i,j,1) = t(j,1,3) = t(1,3,2) = t(3,2,i) = t(2,i,j) = 0.
\]
If so, the principal minor of $C$, 
formed by rows and columns $i,j,1,2,3$, 
must carry equal off-diagonal entries within each row. 
But then we would have $t(i,j,3)=0$ 
which contradicts with the initial assumption 
$t(i,j,3) \equiv 1 \pmod{2}$. 
Therefore, $t(i,j,3)=0$. 

Similarly, $t(i,2,k)=0$ for any $i \in V_1$, $k \in V_3$. 

For every $i \in (W_0 \cup W_1 \backslash \{1,4\})$, 
we have $t(i,2,3)=0$, hence 
$c_{2i}=c_{23}=0$, 
$c_{3i}=c_{32}=0$, 
and $c_{i2}=c_{i3}$. 
If $c_{i2}=1$, apply switching operation to vertex $i$ 
to ensure $c_{i2}=c_{i3}=0$. 
Now we have $c_{2i}=c_{23}=c_{3i}=c_{32}=c_{i2}=c_{i3}=0$ 
for all $i \in W_0 \cup W_1$. 

For any $i \in V_1$ and $j \in V_2$, 
we have $t(i,j,3)=0$, hence $c_{ij}=c_{i3}=0$.
The same is true when $i=1$ or $j=2$. 
Therefore, $c_{ij}=0$ for all $i \in W_1$ and $j \in W_2$, 
and similarly, $c_{ik}=0$ for all $i \in W_1$ and $k \in W_3$. 

Denote by $F$ the subgraph of $D$ spanned by $W_1$. 
If $i,j \in W_1$, $i \neq j$, and $c_{ij}=c_{ji}=1$, 
then $t(i,j,2)=2$ 
(since $c_{i2}=c_{2i}=c_{j2}=c_{2j}=0$). 
Hence, $F$ does not have a pair of opposite arcs. 

We know that $c_{14}=1$, $c_{41}=0$, 
so $F$ contains a transitive tournament of size $2$ 
spanned by vertices $1$ and $4$, 
and $1$ is the vertex of zero in-degree in this tournament. 
Let $T$ be a transitive tournament of the largest possible size 
contained in $F$ such that $1$ is its vertex of zero in-degree. 
Let $i$ denote the vertex of zero out-degree in $T$. 
As the out-degree of $i$ in the whole $D$ is odd, 
there exists vertex $j$, distinct from $i$ and $1$, such that $c_{ij}=1$.
As $c_{ij}=1$, $\;j$ cannot belong to $W_2$ or $W_3$. 
The two remaining cases are $j \in W_1$ and $j \in W_0$. 

If $j \in W_1$, then $c_{ji}=0$. 
By the maximality of $T$, 
there exists a vertex $k$ in $T$ such that $c_{kj}=0$. 
As $i$ is the zero out-degree vertex in tournament $T$, 
we get $c_{ki}=1$ and $c_{ik}=0$. 
If $c_{jk}=0$, we get $t(k,i,j)=2$. 
If $c_{jk}=1$, we get $t(k,i,j,2,3)=3$. 

If $j \in W_0$, then $t(1,2,j)=0$. 
Hence, $c_{1j}=c_{12}=0$ and $c_{j1}=c_{j2}=0$. 
As $1$ is the zero in-degree vertex in tournament $T$, 
we get $c_{1i}=1$ and $c_{i1}=0$. 
If $c_{ji}=0$, we get $t(1,i,j)=2$. 
If $c_{ji}=1$, we get $t(1,i,j,2,3)=3$. 
\end{proof}

\begin{lemma}\label{th:2m_1_4m_2}
Let $m$ be even, 
and $M$ be a $(2m+1) \times (4m+2)$ matrix in binormal form. 
One can pick up $2m$ columns in $M$ whose sum is 
the $(2m+1)$-dimensional zero vector.
\end{lemma}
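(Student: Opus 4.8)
\emph{Proof strategy.} The plan is to run the argument of Lemma~\ref{th:2m_1_4m_5} in the regime $2m+1\equiv 1\pmod 4$ --- which is precisely where the assumption that $m$ is even enters --- with Lemma~\ref{th:digraph} taking over the role that Lemma~\ref{th:2m_1_2m_1} played there. Write $M=[a_{ij}]$ and introduce the $(2m+1)\times(2m+1)$ matrix $C=[c_{ij}]$ with $c_{ij}=a_{i,2j-1}=a_{i,2j}$ for $i\neq j$ and arbitrary diagonal, say $c_{ii}=0$. The computational core I would isolate first is a parity bookkeeping step for the following selection recipe. Given subsets $Q\subseteq S\subseteq\{1,\dots,2m+1\}$, select \emph{both} columns $2i-1,2i$ of $M$ for each $i\in Q$, \emph{no} columns for each $i\in S\setminus Q$, and \emph{exactly one} column $j(l)\in\{2l-1,2l\}$ for each $l\notin S$; this selection has $2|Q|+(2m+1-|S|)$ columns, which equals $2m$ when $(|S|,|Q|)\in\{(3,1),(5,2)\}$. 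Using only that the two columns of any pair sum to a unit vector (because $a_{i,2i-1}\neq a_{i,2i}$), that they agree in every coordinate other than their own, and --- in the principal case below --- that every off-diagonal row sum of $C$ equals $1$, one verifies in $\F_2$ that the sum of the selected columns has, in coordinate $\ell$: (a) the value $\kappa_\ell+a_{\ell,j(\ell)}$ with $\kappa_\ell$ not depending on $j(\ell)$ if $\ell\notin S$, so the choice of $j(\ell)$ can set it to $0$ without disturbing any other coordinate; (b) the value $1+d^+_S(p)$ if $\ell=p\in S\setminus Q$; and (c) the value $d^+_S(q)$ if $\ell=q\in Q$, where $d^+_S(\cdot)$ denotes out-degree inside the subgraph spanned by $S$ of the digraph $D$ on $\{1,\dots,2m+1\}$ whose adjacency matrix is $C$ with zero diagonal.

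Next I would dispatch the degenerate case in which some off-diagonal row sum of $C$ is even. Permuting the rows of $M$ together with the matching column-pairs keeps $M$ in binormal form and, by Observation~\ref{th:operations}, leaves the target property unchanged, so we may take the last row of $C$ to have even off-diagonal sum, i.e.\ $\sum_{i=1}^{2m}c_{2m+1,i}=0$. Then $M[1:2m,\,1:4m]$ is in binormal form, and Lemma~\ref{th:Enomoto1} furnishes one column $j(i)\in\{2i-1,2i\}$ from each of the first $2m$ pairs whose sum is the $2m$-dimensional zero vector; back in $M$ the last coordinate of that sum is $\sum_{i=1}^{2m}a_{2m+1,j(i)}=\sum_{i=1}^{2m}c_{2m+1,i}=0$, so these $2m$ columns already work. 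Hence we may assume every off-diagonal row sum of $C$ equals $1$, equivalently every vertex of $D$ has odd out-degree. Since $m$ is even, $D$ has $2m+1\equiv 1\pmod 4$ vertices, so Lemma~\ref{th:digraph} yields either three vertices whose spanned subgraph has exactly two vertices of odd out-degree, or five vertices whose spanned subgraph has exactly three vertices of odd out-degree.

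In the first case, let $S$ consist of those three vertices and let $Q\subseteq S$ be the one among them of even out-degree in $D[S]$. The recipe then gives $2m$ columns; the coordinates outside $S$ are zeroed by the choices of $j(l)$, the two coordinates in $S\setminus Q$ equal $1+d^+_S(p)$, which vanishes in $\F_2$ since $d^+_S(p)$ is odd, and the coordinate in $Q$ equals $d^+_S(q)$, which vanishes since it is even. In the second case, let $S$ consist of those five vertices and $Q\subseteq S$ the two of even out-degree in $D[S]$; again the recipe gives $2m$ columns and the same computation zeroes every coordinate. Either way $M$ has $2m$ columns summing to the zero vector. (The case $m=0$ is trivial, and for $m\geq 2$ the column counts $2+(2m-2)$ and $4+(2m-4)$ are nonnegative.)

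The one step that demands genuine care is the verification of (a)--(c): one must track the contribution of a doubled pair (a unit vector in its own coordinate, zero elsewhere), of an omitted pair (which merely withdraws its off-diagonal entries from the would-be all-ones row sums of $C$), and of each chosen single column, and confirm that the single-column choices decouple coordinate by coordinate. Once that is in hand the rest is automatic: the type-$2$/type-$3$ dichotomy of Lemma~\ref{th:digraph} is tailored exactly so that the odd-out-degree vertices become the omitted pairs that cancel the forced $1$'s in (b), the even-out-degree vertices become the doubled pairs whose coordinates in (c) vanish, and both column counts land on $2m$.
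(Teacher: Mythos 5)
Your proof is correct and follows essentially the same route as the paper: after the identical reduction to the case where every off-diagonal row sum of $C$ equals $1$, you invoke Lemma~\ref{th:digraph} and select both columns of the pairs indexed by the even-out-degree vertices, no columns from the odd ones, and one suitable column from each remaining pair, which is exactly the paper's selection for a set $I$ with $|I|=2t(I)-1$. The only difference is presentational: you check the cancellation by direct coordinate-wise parity bookkeeping (using that paired columns differ only in their own coordinate), whereas the paper reorders the pairs and applies Lemma~\ref{th:Enomoto1} to the submatrix $M[1:n-2t+1,\:1:2(n-2t+1)]$.
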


\begin{proof}[\bf{Proof}]
Let $n=2m+1$, $M=[a_{ij}]$. 
Let $C=[c_{ij}]$ be an $n \times n$ binary matrix 
where $c_{ij} = a_{i,2j-1} = a_{i,2j}$ for $i \neq j$. 
The values of diagonal entries $c_{ii}$ are not important 
and may be set to zero. 
For a subset $I \subseteq \{1,2,\ldots,n\}$ and $i \in I$, 
we denote $\sigma_i(I) = \sum_{j \in I \backslash \{i\}} c_{ij}$. 
We say that $I$ is of {\it type} $t$ 
if among $|I|$ values $\sigma_i(I)$ with $i \in I$ 
there are exactly $t$ that are equal to $1$. 
Let $t(I)$ denote the type of $I$.

Similarly to the proof of Lemma~\ref{th:2m_1_4m_5}, 
we can assume that the sum of off-diagonal entries 
in each row of $C$ is equal to $1$. 
Hence, $t(\{1,2,\ldots,n\})=n$.

We claim that if there exists $I \subseteq \{1,2,\ldots,n\}$ 
such that $|I| = 2t(I)-1$, 
then one can pick up $2m$ columns in $M$ with zero sum.
Indeed, as $t(1,2,\ldots,n)=n$, we have $|I| \neq n$, so $|I| \leq n-2$. 
Without limiting generality, we may assume that 
$I=\{n-2t+2,n-2t+3,\ldots,n\}$ where $t=t(I)$, $2t \leq n-1$, 
$\sigma_i(I)=1$ for $n-2t+2 \leq i \leq n-t+1$, and
$\sigma_i(I)=0$ for $n-t+2 \leq i \leq n$. 
As $M[1:n-2t+1,\: 1:2(n-2t+1)]$ is in binormal form, 
by Lemma~\ref{th:Enomoto1}, 
there is a set of indices 
$j(i)\in\{2i-1,2i\}\:$ ($i=1,2,\ldots,n-2t+1$) such that 
the sum of columns $j(1),j(2),\ldots,j(n-2t+1)$ 
in $M[1:n-2t+1,\: 1:2(n-2t+1)]$
is equal to the $(n-2t+1)$-dimensional zero vector. 
As ${\displaystyle \sum_{j=1}^{n-2t+1} c_{ij} = 1 - \sigma_i(I)}$ 
for $n-2t+2 \leq i \leq n$, 
the sum of columns $j(1),j(2),\ldots,j(n-2t+1)$ in $M$ 
has the last $t-1$ entries equal to $1$, 
and the rest equal to $0$. 
Columns $2r-1$ and $2r$ in $M$ differ only in the $r$'th row, 
so the sum of columns $2n-2t+3,2n-2t+4,\ldots,2n$ in $M$ 
also has the last $t-1$ entries equal to $1$, 
and the rest equal to $0$. 
Set $j(i)=n+1+i$ for $n-2t+2 \leq i \leq n-1$. 
Then the sum of columns $j(1),j(2),\ldots,j(n-1)$ in $M$ 
is the $n$-dimensional zero vector.

Let $D$ be a digraph with vertices $1,2,\ldots,n$ 
where an arc $(i,j)$ is present if and only if $c_{ij}=1$. 
As $n \equiv 1 \pmod{4}$, and every vertex has odd out-degree, 
$D$ satisfies conditions of Lemma~\ref{th:digraph}. 
Thus, there is a subset $I \subset \{1,2,\ldots,n\}$ 
(a triple or a quintuple) such that $|I| = 2t(I)-1$. 
As we just have shown, 
it guarantees the existence of $2m$ columns in $M$ with zero sum.
\end{proof}

\begin{lemma}\label{th:2m_1_4m_3}
Let $m$ be even, 
and $M$ be a $(2m+1) \times (4m+3)$ matrix in binormal form 
where the sum of all columns is a zero vector. 
If $M$ has two identical columns, 
then it also has a set of $2m$ columns with zero sum 
that includes at most one of the two identical columns.
\end{lemma}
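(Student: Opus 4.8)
First I would pin down the extra column. Since $M$ is in binormal form with $2m+1$ rows and $4m+3=2(2m+1)+1$ columns, its first $4m+2$ columns split into pairs $P_1,\dots,P_{2m+1}$, where $P_l$ consists of columns $2l-1$ and $2l$; the two columns of $P_l$ agree in every row $\ne l$ and differ in row $l$, so their sum is the $l$-th unit vector $e_l$. Hence the sum of all columns of $M$ is $\bigl(\sum_{l=1}^{2m+1}e_l\bigr)+v=\mathbf 1+v$, where $v$ is the last column, and the hypothesis that this sum vanishes forces $v=\mathbf 1$, the all-ones vector. Also, the two identical columns cannot lie in a common pair $P_l$, since those two columns differ in row $l$. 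If one of the two identical columns is the last column $v=\mathbf 1$, then applying Lemma~\ref{th:2m_1_4m_2} to the $(2m+1)\times(4m+2)$ binormal matrix of the first $4m+2$ columns yields $2m$ columns summing to zero; none is the last column, so at most one of the two identical columns is used, and we are done.

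The main case is that both identical columns lie in the binormal block, say $C_1\in P_i$ and $C_2\in P_j$ with $i\ne j$. Introduce the reduced matrix $C=[c_{kl}]$, $c_{kl}=a_{k,2l-1}=a_{k,2l}$ for $k\ne l$, as in the proof of Lemma~\ref{th:2m_1_4m_2}. Comparing $C_1$ and $C_2$ in the rows other than $i,j$, the equality $C_1=C_2$ gives $c_{ki}=c_{kj}$ for all $k\ne i,j$; in particular $c_{ri}=c_{rj}$ for $r\ne i,j$. Put $\sigma_l=\sum_{k\ne l}c_{lk}$. If $\sigma_i$ is even I would skip the pair $P_i$: apply Lemma~\ref{th:Enomoto1} to the binormal $2m\times 4m$ matrix formed by the rows $\ne i$ and the pairs $P_k$ ($k\ne i$) with target zero; the $2m$ selected columns sum to zero on the rows $\ne i$ and to $\sigma_i=0$ on row $i$, hence to zero, and they avoid $P_i$, so avoid $C_1$. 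The case ``$\sigma_j$ even'' is symmetric (skip $P_j$, avoid $C_2$). Thus it remains to treat $\sigma_i=\sigma_j=1$.

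For this last subcase the plan is to build the required $2m$-set from (i) the all-ones column $\mathbf 1$; (ii) the partner columns $C_1'=C_1+e_i$ and $C_2'=C_2+e_j$, or both columns of $P_i$, or both columns of $P_j$; and (iii) a Lemma~\ref{th:Enomoto1}-selection of one column out of each of the remaining pairs, with one or two pair-indices skipped. For any such configuration Lemma~\ref{th:Enomoto1} forces the column sum to vanish, for free, on all rows owned by the selected pairs, and what remains is a small linear system over $\F_2$ in the bits $c_{ij},c_{ji},c_{ir},c_{jr},\sigma_r$ (and the analogues for a second skipped index) coming from rows $i$, $j$ and the skipped row(s); the identity $c_{ri}=c_{rj}$ makes the equation at a skipped row collapse, leaving conditions of the shape ``$(c_{ir},c_{jr})$ equals a prescribed pair and $\sigma_r$ equals a prescribed bit''. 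Since $\sum_{k\ne i}c_{ik}=\sigma_i=1$ is odd, the values $c_{ik}$ ($k\ne i,j$) are not all equal to $c_{ij}$ (otherwise that sum would be $2m\,c_{ij}$, even as $m$ is even), and likewise $c_{jk}$ ($k\ne i,j$) is not constantly $c_{ji}$; a short case analysis built on these facts and on which indices $k\ne i,j$ have $\sigma_k$ even shows that one of the configurations is always solvable. In particular, when skipping a single pair $P_a$ with $\sigma_a=0$ would be forced to select $C_1$ from $P_i$ \emph{and} $C_2$ from $P_j$, one switches to a configuration containing both columns of $P_i$, so that $C_1$ is used but $C_2$ is not. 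I expect this bookkeeping --- listing enough configurations, computing their feasibility conditions, and checking that they jointly cover every distribution of the profiles $(\sigma_k,c_{ik},c_{jk})$, especially the degenerate patterns where these profiles are nearly constant --- to be the main obstacle; the remaining ingredients (the structure of binormal form and Lemmas~\ref{th:Enomoto1} and~\ref{th:2m_1_4m_2}) enter only routinely.
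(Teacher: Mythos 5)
Your reductions up to the case $\sigma_i=\sigma_j=1$ are sound and essentially parallel the paper's opening moves: the last column is forced to be $(1,1,\ldots,1)^T$, the case where one duplicate is the last column follows from Lemma~\ref{th:2m_1_4m_2}, and when $\sigma_i=0$ (or $\sigma_j=0$) skipping the pair $P_i$ (resp.\ $P_j$) and applying Lemma~\ref{th:Enomoto1} with target zero gives $2m$ columns avoiding that duplicate. But the remaining case is where the lemma's difficulty actually lives, and there your text is a plan, not a proof: you list candidate configurations and explicitly concede that verifying they cover all profiles is an unresolved obstacle. Within $\sigma_i=\sigma_j=1$, the sub-case $c_{ij}=c_{ji}=0$ does work and is the paper's closing argument (skip both $P_i$ and $P_j$, select one column from each remaining pair with target all-ones on the rows other than $i,j$; the row-$i$ and row-$j$ sums are then $\sigma_i-c_{ij}=1$ and $\sigma_j-c_{ji}=1$, so adding the all-ones last column gives $2m$ columns with zero sum avoiding both duplicates) --- but you never carry even this out. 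The sub-case $c_{ij}=1$ or $c_{ji}=1$ is genuinely problematic for your scheme: your one- or two-skip configurations impose conditions of the form ``some $\sigma_r=0$'' or ``some pair $a,b$ has $c_{ab}=c_{ba}=0$ together with prescribed bits $c_{ir},c_{jr}$'', and these can all fail, for instance when every $\sigma_k$ is odd; moreover Lemma~\ref{th:Enomoto1} determines the selection uniquely from the target, so you cannot in general also force it to avoid a duplicate sitting inside a selected pair. That regime is exactly why Lemma~\ref{th:2m_1_4m_2} itself needs the digraph Lemma~\ref{th:digraph} (triples of type $2$ \emph{or} quintuples of type $3$), so there is no reason to expect a short bounded list of skip-configurations to close it.

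The paper avoids this bookkeeping by an idea missing from your proposal. If, say, $c_{ji}=1$, then since the duplicates sit in $P_i$ and $P_j$ one knows which member of $P_j$ is the duplicate; adding row $j$ to suitable other rows and then swapping that duplicate column with the all-ones last column produces a matrix whose first $4m+2$ columns are again in binormal form, and Lemma~\ref{th:2m_1_4m_2} applied there yields $2m$ columns with zero sum that avoid the duplicated column of $M$ (Observation~\ref{th:operations} transfers this back to $M$). Without either this reduction or a fully worked-out replacement for the digraph-type argument, your proof is incomplete precisely at its central case.
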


\begin{proof}
Set $n=2m+1$. 
Let $x_i = (a_{1i},a_{2i},\ldots,a_{ni})^T$ 
be the $i$th column of $M\:$ ($i=1,2,\ldots,2n+1$). 
As $M$ is in binormal form, 
we can define $n \times n$ matrix $C=[c_{ij}]$ 
where $c_{ij} = a_{i,2j-1} = a_{i,2j}$ for $i \neq j$. 
The values of diagonal entries $c_{ii}$ are not important 
and may be set arbitrarily. 

If one of the two columns that are identical is the last column, 
then the statement of the lemma follows from Lemma~\ref{th:2m_1_4m_2}. 
Hence, without limiting generality, we may assume that 
the two identical columns are $2n-2$ and $2n$. 

Suppose, $c_{n,n-1} = 1$. 
Then $a_{n,2n-2} = 1$. 
As $x_{2n}=x_{2n-2}$, we get $a_{n,2n} = 1$ and $a_{n,2n-1} = 0$. 
As $M$ is in binormal form, 
$x_1+x_2+\ldots+x_{2n} = (1,1,\ldots,1)^T$.
As the sum of all columns of $M$ is a zero vector, 
$x_{2n+1} = (1,1,\ldots,1)^T$. 
Add row $n$ to each row $i\in\{1,2,\dots,n-1\}$ 
where $a_{i,2n-1} \neq a_{i,2n+1}$. 
After this is done, 
columns $2n-1$ and $2n+1$ differ only in the last entry. 
Now swap columns $2n$ and $2n+1$. 
The resulting matrix $M'$ is in binormal form. 
By Lemma~\ref{th:2m_1_4m_2}, 
there is a set of $2m=n-1$ columns in $M'$ with zero sum 
that does not include column $2n+1$ of $M'$ 
(which originated from column $2n$ of $M$). 
In this case, $M$ has a set of $2m$ columns with zero sum 
that does not include column $2n$. 
Hence, we may assume $c_{n,n-1} = 0$, 
and similarly, $c_{n-1,n} = 0$. 

The $(n-1) \times (2n-2)$ submatrix $M[1:n-1,\: 1:2n-2]$ 
is in binormal form. 
By Lemma~\ref{th:Enomoto1}, 
there is a set of indices 
$j(i) \in \{2i-1,2i\}\:$ ($i=1,2,\ldots,n-1$) 
such that the sum of columns $j(1),j(2),\ldots,j(n-1)$ 
in $M[1:n-1,\: 1:2n-2]$ is the $(n-1)$-dimensional zero vector. 
We recall that $c_{n,n-1} = 0$. 
If $\sum_{j=1}^{n-2} c_{n,j} = 0$, 
then the sum of columns $j(1),j(2),\ldots,j(n-1)$ in $M$ 
is the $n$-dimensional zero vector. 
Hence, we may assume $\sum_{j=1}^{n-2} c_{n,j} = 1$, 
and similarly, $\sum_{j=1}^{n-2} c_{n-1,j} = 1$.

The $(n-2) \times (2n-4)$ submatrix $M[1:n-2,\: 1:2n-4]$ 
is in binormal form. 
By Lemma~\ref{th:Enomoto1}, 
there is a set of indices 
$j(i) \in \{2i-1,2i\}\:$ ($i=1,2,\ldots,n-2$) 
such that $x_{j(1)} + x_{j(2)} + \ldots + x_{j(n-2)}$ 
has the first $n-2$ entries equal to $1$. 
Since
$\sum_{j=1}^{n-2} c_{n-1,j} = 1$ and 
$\sum_{j=1}^{n-2} c_{n,j} = 1$, 
the last two entries of $x_{j(1)} + x_{j(2)} + \ldots + x_{j(n-2)}$ 
are also equal to $1$. 
As $x_{2n+1}=(1,1,\ldots,1)^T$, we get 
$x_{j(1)} + x_{j(2)} + \ldots + x_{j(n-2)} + x_{2n+1} = 0$.
\end{proof}

\begin{proof}[\bf{Proof of Theorem \ref{th:s_2m_2m_1_even}}]
By \eqref{eq:recursive} and Theorem~\ref{th:s_2m_2m}, 
$\s_{2m}(2m+1) \geq \linebreak \s_{2m}(2m) + 1 = 4m+2$. 
To prove $\s_{2m}(2m+1) \leq 4m+2$, 
consider a sequence $S$ of size $4m+2$ over $\Z_2^{2m+1}$. 
Let $M$ be a $(2m+1) \times (4m+2)$ binary matrix 
whose columns represent the vectors from $S$. 
We need to prove that $M$ has a set of $2m$ columns 
whose sum is a zero vector. 

By Theorem~\ref{th:s_2m_2m}, $4m+2 \geq \s_{2m}(2m)$. 
If the rank of $M$ is less than $2m+1$, 
then by Lemma~\ref{th:rank}, there is a set of $2m$ columns 
that sum up to a zero vector. 
Suppose that the rank of $M$ is $2m+1$. 
Let $x\in\Z_2^{2m+1}$ be the sum of all columns of $M$, 
and $x_{4m+2}\in\Z_2^{2m+1}$ be the last column.
Add $x + x_{4m+2}$ to each column of $M$, 
and expand the matrix by a new column equal to $x$. 
In the resulting $(2m+1) \times (4m+3)$ matrix $M'$, 
the sum of all columns is a zero vector, 
and the last two columns are equal to $x$. 
$M'$ satisfies conditions of Lemma~\ref{th:Enomoto2} 
and can be brought to binormal form $M''$ by 
permutations of the columns and additions of one row to another. 
There are two identical columns in $M''$ 
that originated from the last two columns of $M'$. 
By Lemma~\ref{th:2m_1_4m_3}, there is a set of $2m$ columns in $M''$ 
with zero sum 
which includes at most one of these two columns. 
By Observation~\ref{th:operations}, 
$M'$ has a set of $2m$ columns with zero sum 
that does not include the last column. 
Therefore, $M$ has a set of $2m$ columns with zero sum.
\end{proof}

\section*{Acknowledgment}

The author thanks the anonymous referees for careful reading 
and helpful suggestions.

\bibliographystyle{elsarticle-num-names-alphsort}
\bibliography{EGZ5}

\end{document}